\documentclass[10pt]{amsart}
\usepackage[english]{babel}
\usepackage{amssymb}
\usepackage{amsmath}
\usepackage{lscape}

\thispagestyle{empty}

\textheight 21.5cm
\textwidth 14cm \topmargin -0.6cm \oddsidemargin 1cm \evensidemargin
1cm

\newcommand{\can}{\overline{\phantom{x}}}

\newtheorem{dummy}{Dummy}

\newtheorem{lemma}[dummy]{Lemma}
\newtheorem{theorem}[dummy]{Theorem}

\newtheorem{corollary}[dummy]{Corollary}

\theoremstyle{definition}

\newtheorem{example}[dummy]{Example}

\newtheorem{remark}[dummy]{Remark}

\newcommand{\ignore}[1]{}

\author{D. Thompson}
\author{S. Pumpl\"un}

\email{thompson.danjames@gmail.com; susanne.pumpluen@nottingham.ac.uk}
\address{School of Mathematical Sciences\\
University of Nottingham\\ University Park\\ Nottingham NG7 2RD\\
United Kingdom }

\keywords{Skew polynomial ring, skew polynomials, division algebras, MRD codes.}

\subjclass[2010]{Primary: 16S36}

\begin{document}

\title[Division algebras and MRD codes from skew polynomials]
{Division algebras and MRD codes from skew polynomials}

\maketitle

\begin{abstract}
Let $D$ be a  division algebra, finite-dimensional over its center, and $R=D[t;\sigma,\delta]$ a skew polynomial ring.

Using skew polynomials $f\in R$, we construct division algebras and maximum rank distance codes consisting of matrices with entries in a noncommutative division algebra or field. These include Jha Johnson semifields, and the classes of classical and twisted Gabidulin codes constructed by Sheekey.
\end{abstract}

%
%

\section{Introduction}

Rank distance codes are important both in
coding theory and cryptography. One of the best known maximum rank distance (MRD) codes is probably the Gabidulin code \cite{Gabidulin} which was mentioned already by Delsarte \cite{D}. In coding theory, MRD codes are well suited to correct errors \cite{BM85, R91}. In cryptography, they are used to design  public-key cryptosystems, see for instance \cite{GPT91, FL06}.

 MRD codes over general (non-finite) fields, in particular number
fields, were already studied in \cite{ALR} and later touched on in \cite{Sheekey}.
Rank-metric codes over both cyclic and more general Galois extensions were considered in \cite{R91, Roth96, ACLN}.
Although rank metric codes have been also constructed over finite principal ideal rings \cite{KM19} and
discretely valued rings \cite{EHNS}, to our knowledge they have not yet been studied over noncommutative rings. In this paper
we also consider MRD codes in $M_k(B)$, where $B$ is a noncommutative division algebra.

We  construct these MRD codes using skew polynomials.  Skew polynomials have been successfully used in constructions of both division algebras (mostly semifields) and linear codes \cite{ALR,  BU, BL, HGL, P66, P18.0, P18.1}, in particular building space-time block codes (STBCs) \cite{P18.2} and MRD codes \cite{ Sheekey16,  Sheekey}.

Our codes can be seen as generalizations of both the classical and twisted Gabidulin codes in \cite{Gabidulin}, resp., \cite{Sheekey16}.  We put Sheekey's construction \cite{Sheekey} in a broader context which helps to understand it better, and potentially allows other  ways to generalize MRD coding using skew polynomials.
The drawback is that rather early on we have to rigorously restrict the choice of the polynomials $f$ we can employ, and that the construction remains rather theoretical.

Sheekey  \cite{Sheekey} only considers skew polynomials $f\in K[t;\sigma]$  with coefficients in cyclic Galois field extensions for his construction and limits himself to the case that  the minimal central left multiple of $f$ has  maximal degree. He misses out on codes (with matrix entries both in a noncommutative division algebra, and with entries in fields) and algebras that can be obtained by employing skew polynomials with coefficients in a noncommutative division algebra. He also misses out on constructions using $f\in D[t;\delta]$.
 We construct both new division algebras and MRD codes with entries in a noncommutative division algebra, and with entries in fields.

The first five Sections of the paper contain the preliminaries (Section 1) and theoretical background needed to obtain the main results (Sections 2 to 5).
Let $D$ be a division algebra of degree $d$ over its center,
 and $f\in R=D[t;\sigma,\delta]$ a monic irreducible skew polynomial with a bound that lies  in the center $C(R)$ of $R$.

While developing the theory, we point out how the choice of $D$ and the polynomial $f$ has to be  restricted in order to construct both division algebras and  MRD codes out of $f$,  a scalar $\nu\in D$ and a suitable $\rho\in {\rm Aut}(D)$.

Apart from Section \ref{sec:elements in N_general}, we fix the following general assumptions unless specified otherwise:
 $R=D[t;\sigma]$, where $\sigma$ is an automorphism of $D$ of finite order $n$ modulo inner automorphisms, i.e. $\sigma^n=i_u$ for some inner automorphism $i_u(z)=uzu^{-1}$, and $F=C\cap {\rm Fix}(\sigma)$.
 Choose $\rho\in {\rm Aut}(D)$, such that $F/F'$ with $F'={\rm Fix}(\rho)\cap F$ is finite-dimensional. Let $\nu\in D^\times$.

Let $f\in R$ be monic and irreducible of degree $m>1$, and $h$ the minimal central left multiple of $f$, so that
 $ R/Rh \cong M_k(B)$ for some division algebra $B$ (Theorem \ref{thm:main2}). Let $l<k$ be a positive integer.
 Define
$S_{n,m,l}(\nu,\rho, f)=\lbrace a+Rh\,|\, a\in P\rbrace\subset R/Rh$
with the set
$P=\lbrace d_0+d_1t+\dots+d_{lm-1}t^{lm-1}+\nu\rho(d_0)t^{lm}\,|\, d_i\in D\rbrace.$
Let $L_a:R/Rf\to R/Rf$ be the left multiplication map  $L_a(b+Rf)=ab+Rf$.  We have well-defined maps
$ S_{n,m,l}(\nu,\rho, f) \longrightarrow {\rm End}_B(R/Rf)\longrightarrow M_k(B), a\mapsto L_a \mapsto M_a,$
 where $M_a$ is the matrix representing $L_a$
with respect to a right $B$-basis of $R/Rf$. The image $\mathcal{C}_{n,m,l}=\lbrace M_a\mid a\in S_{n,m,l}(\nu,\rho,f)\rbrace$
 of $S_{n,m,l}(\nu,\rho,h)$ in $M_{k}(B)$ is an $F'$-linear  rank metric code.
If $\mathcal{C}_{n,m,l}$ has distance $d_{\mathcal{C}}=k-l+1$, then $\mathcal{C}_{n,m,l}$ is called
a  \textit{maximum rank distance code} in $M_k(B)$.
We will usually deal with the case that $deg (h)=dmn$, so that $B$ is a field.

 The  most general results are contained in Section 6: If $P$ does not contain a polynomial of degree $lm$, whose irreducible factors are all similar to $f$, then $\mathcal{C}_{n,m,l}$ is an $F'$-linear  MRD code in $M_k(B)$ with minimum distance $k-l+1$ (Theorem \ref{thm:MRD}).

 Furthermore, let $D=(E/C,\gamma,a)$ be a cyclic division algebra
such that
$\sigma|_E\in {\rm Aut}(E)$ and $\gamma\circ\sigma|_E=\sigma|_E\circ\gamma$, and
 $\sigma^n(z)=u^{-1}zu$ for some $u\in E$. Let $f(t)=\sum_{i=0}^ma_it^i\in E[t;\sigma]$  be
 a monic irreducible polynomial of degree $m$, 
 such that
 ${\rm deg}(h)=dmn$, and such that all monic $f_i$ similar to $f$  lie in $E[t;\sigma]$. Then the algebra
$S_{n,m,1}(\nu,\rho, f)$  is a division algebra, if one of the following holds:
 (i)  $\nu\not\in E$ and $\rho|_E\in {\rm Aut}(E)$;
(ii) $\nu\in E^\times$  and $\rho|_E\in {\rm Aut}(E)$,
such that
$N_{E/F'}(a_0) N_{E/F'}(\nu)\neq 1$ (Theorem  \ref{thm:division2}).
 MRD codes are canonically obtained from the matrices representing the left multiplication of these division algebras.

In Section 7, the nuclei of the algebras and codes are investigated.
We give some examples of algebras obtained from our construction employing $f(t)=t^n-\theta\in K[t;\sigma]$ in Section 8.

We conclude with a brief look at the constructions using a differential polynomial $f\in D[t;\delta]$, where the center of $D$ is a field of characteristic $p$, in Section \ref{sec:elements in N_general}.

The fact that we are using $f\in D[t;\sigma]$, respectively $f\in D[t;\gamma]$, means we have a larger choice of skew polynomials to build codes that Sheekey does, who
only considers $f$ with coefficients in a cyclic field extension.

This work is part of the second author's PhD thesis \cite{DT2020}.

%
%

\section{Preliminaries}

\subsection{Nonassociative algebras}

Let $F$ be a field. We call $A$ an \emph{algebra} over $F$ if there exists an
$F$-bilinear map $A\times A\to A$, $(x,y) \mapsto x \cdot y$, denoted simply by juxtaposition $xy$,
the  \emph{multiplication} of $A$.
An algebra $A$ is called \emph{unital} if there is
an element in $A$, denoted by 1, such that $1x=x1=x$ for all $x\in A$.
We will only consider unital algebras.
 A nonassociative algebra $A\not=0$ is called a {\it division algebra} if for any $a\in A$, $a\not=0$,
the left multiplication  with $a$, $L_a(x)=ax$,  and the right multiplication with $a$, $R_a(x)=xa$, are bijective.
If $A$ is finite-dimensional as an $F$-vector space, then $A$ is a division algebra if and only if $A$ has no zero divisors.
 The {\it left nucleus} of $A$ is defined as ${\rm
Nuc}_l(A) = \{ x \in A \, \vert \, [x, A, A]  = 0 \}$, the {\it
middle nucleus} of $A$ is ${\rm Nuc}_m(A) = \{ x \in A \, \vert \, [A, x, A]  = 0 \}$ and  the {\it right nucleus} of $A$ is
${\rm Nuc}_r(A) = \{ x \in A \, \vert \, [A,A, x]  = 0 \}$, where $[x, y, z] =
(xy) z - x (yz)$ is the {\it associator}. ${\rm Nuc}_l(A)$, ${\rm Nuc}_m(A)$, and ${\rm Nuc}_r(A)$ are associative
subalgebras of $A$. Their intersection
 ${\rm Nuc}(A) = \{ x \in A \, \vert \, [x, A, A] = [A, x, A] = [A,A, x] = 0 \}$ is the {\it nucleus} of $A$.
${\rm Nuc}(A)$ is an associative subalgebra of $A$,
and $x(yz) = (xy) z$ whenever one of the elements $x, y, z$ is in
${\rm Nuc}(A)$. The
 {\it center} of $A$ is ${\rm C}(A)=\{x\in \text{Nuc}(A)\,|\, xy=yx \text{ for all }y\in A\}$.

Let $A$ be a finite-dimensional central simple associative algebra over $F$ of degree $d$ and let $\overline{F}$ denote the algebraic closure of $F$.
Then $A\otimes_F \overline{F} \cong M_d(\overline{F})$, so that we can fix an embedding $A \longrightarrow M_d(\overline{F})$ and view every $a\in A$ as a matrix in $M_d(\overline{F})$. The characteristic polynomial
$$m_a(X)=X^d-s_1(a)X^{d-1}+s_2(a)X^{d-2}-\dots +(-1)^ds_d(a)$$
of $a\in A$ has coefficients in $F$ and is independent of the choice of the embedding. The coefficient $N_A(a)=s_d(a)$ is called the \emph{reduced norm of} $a$ \cite{KMRT}.
Let $K/F$ be a cyclic Galois extension of degree $d$ with Galois group ${\rm Gal}(K/F)=\langle \gamma \rangle$ and norm $N_{K/F}$. Let $c\in F^\times$.
An \emph{associative cyclic algebra} $(K/F,\gamma,c)$ of degree $d$ over $F$ is a $d$-dimensional $K$-vector space
\[
(K/F,\gamma,c)=K \oplus eK \oplus e^2 K\oplus\dots \oplus e^{d-1}K,
\]
with multiplication given by the relations
$\label{eq:rule}
e^d=c,~le=e\sigma(l),
$
for all $l\in K$.  $(K/F,\gamma,c)$ is a division algebra
 for all $c\in F^\times$, such that
  $c^s\not\in N_{K/F}(K^\times)$ for all $s$ which are prime divisors of $d$, $1\leq s\leq d-1$.

\subsection{MRD-codes}

Let $K$ be a field. A \emph{code} is a set  of matrices $\mathcal{C}\subset M_{n, m}(K)$.
Let $L\subset K$ be a subfield, then $\mathcal{C}$ is \emph{$L$-linear} if $\mathcal{C}$ is a vector space over $L$.
A \emph{rank metric code} is a code $\mathcal{C}\subset M_{n, m}(K)$
 equipped with the rank distance function $d(X,Y)={\rm rank}(X-Y).$ Define the \textit{minimum distance} of a rank metric code $\mathcal{C}$ as $$d_{\mathcal{C}}=\text{min}\lbrace d(X,Y)\mid  X,Y\in \mathcal{C}, X\neq Y\rbrace.$$
 An $L$-linear rank metric code $\mathcal{C}$ satisfies the Singleton-like bound
  $$\text{ dim}_{L}(\mathcal{C})\leq n(m-d_{\mathcal{C}}+1)[K:L],$$
where $\text{ dim}_L(\mathcal{C})$ is the dimension of the $L$-vector space $\mathcal{C}$ \cite[Proposition 6]{ALR}.

An $L$-linear rank metric code attaining the Singleton-like bound is called a  \textit{maximum rank distance code} or \textit{MRD-code} (for MRD-codes over cyclic field extensions see \cite{ALR}).

If now $B$ is a not necessarily commutative division algebra then more generally, we again define a \emph{code} as a set  of matrices $\mathcal{C}\subset M_{n, m}(B)$.
Let $B'\subset B$ be a subalgebra, then $\mathcal{C}$ is \emph{$B'$-linear} (or simply \emph{linear}), if $\mathcal{C}$ is a right $B'$-module.

A \emph{rank metric code} $\mathcal{C}\subset M_{n, m}(B)$ is a code  together with the distance function
$$d(X,Y)=\text{colrank}(X-Y),$$
for all $X,Y\in M_{n,m}(B)$, where ${\rm colrank}$ is the column rank of $A$ (the rank of the right $B$-module generated by the columns of $A$).
A matrix in $M_{n,m}(B)$ has column rank at most $m$; any matrix which attains this bound is said to have attained \emph{full column rank}.
The \emph{minimum distance} of a  rank metric code $\mathcal{C}\subset M_{n,m}(B)$ is defined as
$$d_{\mathcal{C}}=\text{min}\lbrace d(X,Y)\mid  X,Y\in \mathcal{C}, X\neq Y\rbrace.$$
To our knowledge, such codes $\mathcal{C}\subset M_{n, m}(B)$ have not previously been considered in the literature.

\subsection{Skew polynomial rings} \label{subsec:pre}

 In the following let $D$ be a central simple division algebra of degree $d$ over its center $C$, $\sigma$ a ring endomorphism of
$D$ and $\delta:D\rightarrow D$ a \emph{left $\sigma$-derivation},
i.e. an additive map such that
$\delta(ab)=\sigma(a)\delta(b)+\delta(a)b$
for all $a,b\in D$.
The \emph{skew polynomial ring} $D[t;\sigma,\delta]$ is the
set of skew polynomials $g(t)=a_0+a_1t+\dots +a_nt^n$ with $a_i\in
D$, with term-wise addition and  multiplication  defined
via $ta=\sigma(a)t+\delta(a)$ for all $a\in D$ \cite{O1}.
 Define ${\rm Fix}(\sigma)=\{a\in D\,|\,
\sigma(a)=a\}$ and ${\rm Const}(\delta)=\{a\in D\,|\, \delta(a)=0\}$.
If $\delta=0$, define $D[t;\sigma]=D[t;\sigma,0]$.
If $\sigma=id$, define $D[t;\delta]=D[t;id,\delta]$.

 For $f(t)=a_0+a_1t+\dots +a_nt^n\in R=D[t;\sigma,\delta]$ with $a_n\not=0$, we define the {\emph degree} of $f$ as $ {\rm deg}(f)=n$ and ${\rm deg}(0)=-\infty$.
 A skew polynomial $f\in R$ is \emph{irreducible} if it is not a unit and  it has no proper factors, i.e if there do not exist $g,h\in R$ with
 $1\leq {\rm deg}(g), {\rm deg }(h)< {\rm deg}(f)$ such
 that $f=gh$ \cite[p.~2 ff.]{J96}.  We call $f\in R$ \emph{right-invariant} if $Rf$ is a left and a right ideal in $R$, and a \emph{two-sided maximal element}, if $f$ is right-invariant and $Rf$ is a non-zero maximal ideal in $R$ (equivalently, if $f\not=0$ and
 $R/Rf$ is a simple ring) \cite[p.~13]{J96}.
 Two nonzero skew polynomials $f_1,f_2 \in R$ are \emph{similar}, written $f_1\sim f_2$, if $R/Rf_1 \cong R/Rf_2$ \cite[p.~11]{J96}.

 A skew polynomial  $f \in R$ is  \emph{bounded} if there exists a nonzero polynomial $f^*  \in R$ such that $Rf^* $ is the largest two-sided ideal of $R$ contained in $Rf$. The polynomial $f^* $ is uniquely determined by $f$ up to scalar multiplication by elements of $D^\times$ and is called a \emph{bound} of $f$.

If $f\in R$ has degree $m$, then for all $g\in R$ of degree $l\geq m$,  there exist  uniquely
determined $r,q\in R$ with ${\rm deg}(r)<{\rm deg}(f)$, such that $g=qf+r.$
Let ${\rm mod}_r f$ denote the remainder of right division by $f$.
The skew polynomials $R_m=\{g\in R\,|\, {\rm deg}(g)<m\}$ of degree less that $m$ canonically represent the elements of the left
$R$-modules $R/Rf$. Furthermore, $R_m$ together with the multiplication $g\circ h=gh \, {\rm mod}_r f$
is a unital nonassociative algebra $S_f=(R_m,\circ)$  over $F_0=\{a\in D\,|\, ah=ha \text{ for all } h\in
S_f\}={\rm Comm}(S_f)\cap D$, called a \emph{Petit algebra}. When the context is clear, we simply use juxtaposition for multiplication in $S_f$.
 Note that
$ C(D)\cap{\rm Fix}(\sigma)\cap {\rm Const}(\delta)\subset F_0.$
 For all $a\in D^\times$ we have $S_f = S_{af}$, thus without loss of generality we can assume $f$ is monic when working with Petit algebras $S_f$.
  If $f$ has degree 1 then $S_f\cong D$.

\begin{lemma}\label{right divisors are left divisors} Let $R$ be a ring with no zero divisors. For all $g\in C(R)$, every right divisor of $g$ in $R$ also divides $g$ on the left.
\end{lemma}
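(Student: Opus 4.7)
The plan is to use centrality of $h$ together with cancellation (which holds in a ring without zero divisors) to convert a right factorization $h=gf$ into a left factorization $h=fg$ using the \emph{same} quotient $g$.

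First I would handle the trivial case $f=0$ (which forces $h=0$ and makes the claim vacuous), so that from now on $f\neq 0$. Suppose $f$ is a right divisor of $h$, so that $h=gf$ for some $g\in R$. Since $h\in C(R)$, in particular $h$ commutes with $f$, giving $fh=hf$, i.e.
\[
f(gf)=(gf)f,\qquad\text{hence}\qquad (fg)f=(gf)f.
\]

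Now I would invoke the fact that in a ring without zero divisors one has right cancellation: from $af=bf$ with $f\neq 0$ we get $(a-b)f=0$, and since $f\neq 0$ this forces $a=b$. Applying this to $(fg)f=(gf)f$ yields $fg=gf$, and therefore $h=gf=fg$, exhibiting $f$ as a left divisor of $h$ with quotient $g$.

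There is really no serious obstacle here; the only point that requires a moment's care is confirming that ``no zero divisors'' delivers the right cancellation used above, and that the degenerate case $f=0$ is handled separately so that the cancellation step is legitimate. Notice also that the argument does not need any Euclidean-type division, so it applies uniformly to the skew polynomial rings $D[t;\sigma,\delta]$ considered later in the paper.
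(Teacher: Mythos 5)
Your proof is correct and follows essentially the same route as the paper's: both use centrality of $h$ together with cancellation in a domain to turn $h=gf$ into $h=fg$. The only cosmetic difference is that you commute $h$ past the divisor $f$ and cancel $f$ on the right (obtaining $fg=gf$ along the way), whereas the paper commutes $h$ past the cofactor and cancels it on the left; your explicit handling of the degenerate case $f=0$ is a small extra care the paper omits.
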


\begin{proof}
Suppose $\gamma$ is a right divisor of $g$. Then $g=\delta\gamma$ for some $\delta\in R$. As $g$ lies in the centre of $R$, we have $\delta g = g\delta =\delta\gamma\delta$. This rearranges to $0=\delta g-\delta\gamma\delta=\delta(g-\gamma\delta).$ As $R$ contains no zero divisors and $\delta\neq 0$, it follows that $g=\gamma\delta$.
\end{proof}

\subsection{The minimal central left multiple of $f\in D[t;\sigma]$} \label{subsec:mclm}

 From now on let $\sigma$ be an automorphism of $D$ of finite order $n$ modulo inner automorphisms, i.e. $\sigma^n=i_u$ for some inner automorphism $i_u(z)=uzu^{-1}$.  Then the order of $\sigma|_C$ is $n$. W.l.o.g., we choose $u\in {\rm Fix}(\sigma)$.
Let $R=D[t;\sigma]$  and define $F=C\cap {\rm Fix}(\sigma)$.  $R$ has center
$$C(R) = F[u^{-1}t^n]=\{\sum_{i=0}^{k}a_i(u^{-1}t^n)^i\,|\, a_i\in F \}\cong F[x]$$
with $x=u^{-1}t^n$ \cite[Theorem 1.1.22]{J96}.   All polynomials $f\in R$ are bounded.

For any $f \in R=D[t;\sigma]$ with a bound in $C(R)$, we define the \emph{minimal central left multiple $mclm(f)$ of $f$ in $R$} to be the unique  polynomial of minimal degree $h \in C(R)=F[u^{-1}t^n]$ such that $h = gf$ for some $g \in R$, and such that $h(t)=\hat{h}(u^{-1}t^n)$ for some monic $\hat{h}(x) \in F[x]$. Define $E_{\hat{h}}=F[x]/(\hat{h}(x))$. If $f$ has nonzero constant term,
then $f^*\in C(R)$ \cite[Lemma 2.11]{GLN18}).
  From now on we assume that $f$ has nonzero constant term
  and denote by $h \in C(R)$, $h(t)=\hat{h}(u^{-1}t^n)$, the minimal central left multiple  of $f$.
  Then $h$ equals the bound of $f$ up to a scalar multiple from $D$.  If $f$ is irreducible in $R$, then $\hat{h}(x)$ is irreducible in $F[x]$.
 If $\hat{h}\in F[x]$ is irreducible, then $f=f_1\cdots f_r$ for irreducible $f_i\in R$ such that $f_i\sim f_j$ for all $i,j$
 (\cite{AO}, cf. \cite{PT}).

\begin{lemma}\label{Similar implies same mclm}
Let $f\in R$.
\\ (i) If $f\in R$ is irreducible, then every $g\in R$ similar to $f$ has $h$ as its minimal central left multiple.
\\ (ii) Suppose that $\hat{h}\in F[x]$ is irreducible. Then $f=f_1\cdots f_r$ for irreducible $f_i\in R$ such that $f_i\sim f_j$ for all $i,j$.
\end{lemma}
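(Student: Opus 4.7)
The plan is to prove (i) using the module-theoretic characterization of similarity, namely $f \sim g$ if and only if $R/Rf \cong R/Rg$ as left $R$-modules, together with the observation that the minimal central left multiple of $f$ annihilates its own cyclic quotient. Part (ii) is precisely the result already cited in the paragraph above the lemma, so no new argument is needed there.

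For (i), set $h := mclm(f)$, so $h = sf$ for some $s \in R$ and $h \in C(R)$. The key step is to show that left multiplication by $h$ is the zero map on $R/Rf$: for any $a \in R$, centrality of $h$ gives $ha = ah = asf \in Rf$, so $h \cdot (a + Rf) = 0$. Any left $R$-module isomorphism $R/Rf \to R/Rg$ commutes with left multiplication by $h$, so $h$ also annihilates $R/Rg$. Applying this to the generator $1 + Rg$ yields $h \in Rg$, exhibiting $h$ as a central left multiple of $g$ of the normalized form $\hat h(u^{-1}t^n)$ with $\hat h \in F[x]$ monic. Hence $\deg(mclm(g)) \le \deg(h)$, and the symmetric argument with $f$ and $g$ swapped gives the reverse inequality. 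Since $mclm(f)$ and $mclm(g)$ are then central left multiples of minimal equal degree, both of the form $\hat h(u^{-1}t^n)$ with $\hat h$ monic, they must coincide.

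For (ii), this is exactly the statement attributed to \cite{AO} (cf.~\cite{PT}) in the paragraph immediately preceding the lemma, and I would simply cite those references. The main obstacle in (i) is only a small bookkeeping point: one must verify that the monicity of $\hat h$ together with the requirement $h(t) = \hat h(u^{-1}t^n)$ pins the minimal central left multiple down uniquely, so that the two-way degree inequality forces equality on the nose rather than equality up to a scalar in $F^\times$.
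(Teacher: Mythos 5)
Your proposal is correct, but for part (i) it takes a genuinely different route from the paper. The paper disposes of (i) in one line by citing Carcanague's result that all elements similar to an irreducible $f$ admit the same bound $f^{\ast}$, and then using that the minimal central left multiple is the bound up to a scalar. You instead give a self-contained module-theoretic argument: $h=mclm(f)$ is central and lies in $Rf$, hence annihilates $R/Rf$; annihilation is preserved under the $R$-module isomorphism $R/Rf\cong R/Rg$ characterizing similarity; evaluating at $1+Rg$ gives $h\in Rg$, and the two-way degree comparison plus the normalization $h(t)=\hat h(u^{-1}t^n)$ with $\hat h$ monic forces $mclm(f)=mclm(g)$ (the final uniqueness step you flag is handled exactly as in the paper's Lemma~\ref{z in Rf iff z in RFII}: a nonzero difference of two such multiples would be a central left multiple of $g$ of strictly smaller degree). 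Your argument has the advantage of being elementary and of not actually using irreducibility of $f$, so it is in fact more general than the cited statement; the paper's citation is shorter but imports an external result. For part (ii) you defer entirely to the references \cite{AO}, \cite{PT} already named in the preceding paragraph, whereas the paper supplies a short proof: since $\hat h$ is irreducible, $Rh$ is a maximal two-sided ideal, so all irreducible factors in any factorization $h=h_1\cdots h_k$ are similar, and comparing this with $h=pf$ via the uniqueness of factorization up to similarity (Jacobson, Theorem~1.2.9) yields $f=f_1\cdots f_r$ with all $f_i$ similar. Since the lemma appears to be included precisely to record that argument, you should either reproduce it or make the citation do the full work explicitly; as written, (ii) is not proved so much as re-asserted.
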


This follows easily from \cite[p.~9, Corollary 2]{Carcanague} and \cite[Theorem 1.2.9]{J96}.

 The quotient algebra $ R/Rh$ has center
$ C(R/Rh)\cong  F[x]/ (\hat{h}(x) )$, cf. \cite[Lemma 4.2]{GLN18}.
Define $E_{\hat{h}}=F[x]/ (\hat{h}(x) )$.
Suppose that  $\hat{h}(x) \neq x$, and  that
 $\hat{h}$ is irreducible in $F[x]$. Then $h$ generates a maximal two-sided ideal $Rh$ in $R$
\cite[p.~16]{J96} 
and $R/Rh$ is simple over its centre $ E_{\hat{h}}$.

\begin{theorem}\label{thm:main2} \cite{AO}
Let $f \in R=D[t;\sigma]$ be monic and irreducible of degree $m>1$ with
 minimal central left multiple $h(t)=\hat{h}(u^{-1}t^n)$. Then
${\rm Nuc}_r(S_f)$ is a central division algebra over $E_{\hat{h}}$ of degree $s=dn/k$, where $k$ is the number of irreducible factors of $h$ in $R$, and
 $$ R/Rh \cong M_k({\rm Nuc}_r(S_f)).$$
 In particular, this means ${\rm deg}(\hat{h})=\frac{dm}{s}$, ${\rm deg}(h)=km=\frac{dn m}{s}$, and
 $$[{\rm Nuc}_r(S_f) :F]= s^2\cdot \frac{dm}{s}=dms.$$
 Moreover, $s$ divides $gcd(dm,dn)$. If $f$ is not right invariant, then $k>1$ and $s\not=dn$.
\end{theorem}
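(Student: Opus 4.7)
Under our hypotheses $\hat h\in F[x]$ is irreducible (because $f$ is irreducible in $R$, as recorded in the paragraph preceding Lemma \ref{Similar implies same mclm}), so $h$ is a two-sided maximal element and $Rh$ is a maximal two-sided ideal of $R$. The plan is to bring Artin--Wedderburn to bear on $R/Rh$, to identify $R/Rf$ as the essentially unique simple $R/Rh$-module, and to read off ${\rm Nuc}_r(S_f)$ as (a twist of) its endomorphism ring; the numerical claims would then follow from a single dimension count. Since $R/Rh$ is free of rank $\deg(h)$ as a left $D$-module, it is finite-dimensional over $F$, so Artin--Wedderburn yields a division algebra $B$ and an integer $k$ with $R/Rh\cong M_k(B)$, and it remains to match $B$ with ${\rm Nuc}_r(S_f)$ and to identify this $k$ with the number of irreducible factors of $h$.

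To identify $B$, observe that because $h\in Rf$, the two-sided ideal $Rh$ annihilates the simple left $R$-module $R/Rf$, which thus becomes a simple left $R/Rh$-module and hence is isomorphic to the essentially unique simple $M_k(B)$-module $B^k$. Consequently ${\rm End}_R(R/Rf)\cong B^{op}$. A separate short check, using the definition $a\circ b=ab\,{\rm mod}_r f$ of the product in $S_f$, shows that ${\rm Nuc}_r(S_f)$ coincides with the eigenring $E(f)=\{g+Rf:fg\in Rf\}$ of $f$, which via $g+Rf\mapsto(a+Rf\mapsto ag+Rf)$ is isomorphic to the opposite of ${\rm End}_R(R/Rf)$. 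Combining the two opposites gives ${\rm Nuc}_r(S_f)\cong B$, with centre $Z(B)=Z(R/Rh)\cong E_{\hat h}$.

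For the numerical claims, I would factor $h=h_1\cdots h_k$ into irreducibles; because $h$ is two-sided maximal the $h_i$ are pairwise similar, and since $f$ occurs as the right-most factor in the factorisation induced by $h=gf$, all $h_i$ are similar to $f$ (see the proof of Lemma \ref{Similar implies same mclm}(ii)). Similar irreducibles give isomorphic simple modules $R/Rh_i\cong R/Rf$ and hence equal degrees, so $\deg h_i=m$ and $\deg h=km$. Together with $\deg h=n\deg\hat h$ (from $h(t)=\hat h(u^{-1}t^n)$) this gives $\deg\hat h=km/n$. Equating $F$-dimensions on both sides of $R/Rh\cong M_k(B)$, namely $km\cdot d^2 n=k^2 s^2\deg\hat h$, and substituting, yields $d^2 n^2=k^2 s^2$, so $s=dn/k$. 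Hence $\deg\hat h=dm/s$ and $[{\rm Nuc}_r(S_f):F]=s^2\cdot dm/s=dms$; the integrality of $k=dn/s$ and of $\deg\hat h=dm/s$ forces $s\mid\gcd(dm,dn)$.

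Finally, if $k=1$ then $\deg h=m=\deg f$, and since $R$ has no zero divisors and $h=gf$ with $g\in R$, the factor $g$ must lie in $D^\times$, so $Rh=Rf$ is two-sided and $f$ is right invariant; the contrapositive gives $k>1$ when $f$ is not right invariant, whence $s=dn/k\neq dn$. The step I expect to require the most care is the identification of ${\rm Nuc}_r(S_f)$ with the eigenring $E(f)$: the module-theoretic input is standard, but the transition from the ``${\rm mod}_r f$'' multiplication on $R_m$ to the natural ring structure on $E(f)$ needs a short calculation showing that associativity of the triple product $(a\circ b)\circ g$ versus $a\circ(b\circ g)$ is controlled precisely by the condition $fg\in Rf$.
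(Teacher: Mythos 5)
Your proposal is correct and takes essentially the same route as the paper: both reduce to presenting $R/Rh$ as a $k\times k$ matrix ring over the eigenring $I(f)/Rf={\rm Nuc}_r(S_f)$ (the paper imports this wholesale from Jacobson's Theorem~1.2.19, whereas you reassemble it from Artin--Wedderburn, the identification of $R/Rf$ as the unique simple $R/Rh$-module, and a double opposite of endomorphism rings), followed by the identical dimension count $d^2n^2=k^2s^2$. The only genuine divergence is minor and in your favour: you obtain $k>1$ for non-right-invariant $f$ directly, since $k=1$ would force $h=gf$ with $g\in D^\times$ and hence $Rf=Rh$ two-sided, rather than routing through the non-associativity of $S_f$ as the paper does.
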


We know that $[S_f:F]=[S_f:C][C:F]=d^2m\cdot n$.
Since ${\rm Nuc}_r(S_f)$ is a subalgebra of $S_f$, comparing dimensions we obtain that
$$d^2mn=[S_f:F]=[S_f:{\rm Nuc}_r(S_f)]\cdot [{\rm Nuc}_r(S_f):F]=k\cdot dms,$$
that is $[S_f:{\rm Nuc}_r(S_f)]=k.$

If $f$ is not right-invariant which is equivalent to $S_f$ being not associative, which in turn is equivalent to
$k>1$, then $s\not=dn$ looking at the degree of $h$.
Note that ${\rm deg}(h)=dn m$  is the largest possible degree of $h$.

All of the above applies in particular to the special case that $D$ is a finite field extension $K$ of $C$ of degree $n$, and $\sigma\in {\rm Aut}(K)$ has order $n$. Then $R=K[t;\sigma]$  has center
$C(R) = F[t^n]=\{\sum_{i=0}^{k}a_i(t^n)^i\,|\, a_i\in F \}= F[x]$
where $F= {\rm Fix}(\sigma)$ \cite[Theorem 1.1.22]{J96}.

%
%

\section{Constructing sets of matrices employing irreducible $f\in D[t;\sigma]$}\label{sec:matrices}

Let $R=D[t;\sigma]$ be as in Section \ref{subsec:pre} and
 $f\in R$ be an irreducible monic polynomial of degree $m>1$ with nonzero constant term and  minimal central left multiple $h(t)=\hat{h}(u^{-1}t^n)$.
 Let
$$E_f=\lbrace z(t)+Rf\,|\,z(t)=\hat{z}(u^{-1}t^n)\in F[u^{-1}t^n]\rbrace\subset R/Rf.$$
 Together with  the multiplication
  $(x+Rf)\circ(y+Rf)= (xy)+Rf$
 for all $x,y\in F[u^{-1}t^n]$, $E_f$ becomes  an $F$-algebra.

\begin{lemma} \label{z in Rf iff z in RFII} \label{{Ef isomorphic to EF}}
(i) For each $z(t)=\hat{z}(u^{-1}t^n)\in F[u^{-1}t^n]$ with $\hat{z}\in F[x]$, we have $z\in Rf$ if and only if $z\in Rh$.
 \\ (ii) $(E_f,\circ)$ is a field  isomorphic to $E_{\hat{h}}$.
\end{lemma}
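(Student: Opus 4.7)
For part (i), the direction $z\in Rh \Rightarrow z\in Rf$ is immediate: by definition, $h=gf$ for some $g\in R$, so $Rh\subseteq Rf$. The substantive direction is the converse, and the plan is to use Euclidean division inside the commutative ring $C(R)=F[x]$ (where $x=u^{-1}t^n$) together with the minimality property of $h$. Given $z\in Rf$ with $z=\hat z(x)$, I would write $\hat z = \hat q\, \hat h + \hat r$ in $F[x]$ with $\deg \hat r < \deg \hat h$, and then observe that the corresponding polynomial $r(t)=\hat r(u^{-1}t^n)\in C(R)$ satisfies
\[
r = z - q\,h \in Rf,
\]
because $z\in Rf$ and $qh\in Rh\subseteq Rf$. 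Thus $r$ is a central left multiple of $f$ of degree strictly less than $\deg h$; by the minimality clause in the definition of $h$, this forces $r=0$, so $\hat z\in (\hat h)$ in $F[x]$ and hence $z\in Rh$ (note that since $h\in C(R)$, multiplication by $h$ on the left or right agrees).

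For part (ii), I would define
\[
\varphi\colon E_{\hat h}\longrightarrow E_f,\qquad \hat z(x)+(\hat h(x))\longmapsto \hat z(u^{-1}t^n)+Rf,
\]
and verify the isomorphism properties in the obvious order. Well-definedness is immediate from $Rh\subseteq Rf$: if $\hat z_1 - \hat z_2 = \hat q\hat h$, then $z_1-z_2 = q\,h\in Rh\subseteq Rf$. The map is additive by construction, and it is multiplicative because $F[u^{-1}t^n]\subseteq C(R)$ is a commutative subring of $R$, so the product $z_1 z_2$ inside $R$ agrees with the product of the representing central polynomials, and reduction $\mathrm{mod}_r f$ commutes with this product on the subset $E_f$. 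Surjectivity is built into the definition of $E_f$. Injectivity is exactly part (i): if $\varphi(\hat z + (\hat h))=0$ then $z\in Rf\cap C(R)$, so by (i), $z\in Rh$, i.e., $\hat z\in (\hat h)$. Finally, because $f$ is monic irreducible in $R$, Theorem \ref{thm:main2} (together with the preceding discussion) ensures that $\hat h$ is irreducible in $F[x]$, so $E_{\hat h}=F[x]/(\hat h)$ is a field, and therefore so is its isomorphic copy $E_f$.

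The only real subtlety is part (i); everything in (ii) is formal once (i) is established. The key conceptual point in (i) is that although $R/Rf$ is only a left $R$-module and $f$ need not be central, the subset one restricts to lies in the commutative center $C(R)=F[x]$, where classical Euclidean division combined with the extremal definition of $h$ closes the argument. I do not foresee any serious obstacle beyond making sure the degree comparison in the Euclidean step is read correctly in the variable $x=u^{-1}t^n$ rather than in $t$.
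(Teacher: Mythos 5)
Your proposal is correct and follows essentially the same route as the paper: Euclidean division by $\hat h$ in $F[x]$ plus the minimality of $h$ for part (i) (the paper just adds the trivial step of normalizing the remainder to be monic before invoking the definition of $h$), and for part (ii) the same isomorphism, merely written in the opposite direction ($E_{\hat h}\to E_f$ rather than $E_f\to E_{\hat h}$), with injectivity supplied by part (i).
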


\begin{proof}
(i)  As $h=gf$ for some $g\in R$, each $z\in Rh$ also lies in $Rf$.

Conversely, let $z(t)=\hat{z}(u^{-1}t^n)\in F[u^{-1}t^n]$ with $\hat{z}\in F[x]$ be such that $z\in Rf$. By the Euclidean division algorithm in $F[x]$, there exist unique
 $\hat{q}(x),\hat{r}(x)\in F[x]$ such that
 $\hat{z}=\hat{q}\hat{h}+\hat{r},$
 where ${\rm deg}(\hat{r})< {\rm deg}(\hat{h})=s$ or $\hat{r}=0$. If $\hat{r}\neq 0$, then $\hat{r}=\hat{z}-\hat{q}\hat{h}$, i.e. we found $q(t)=\hat{q}(u^{-1}t^n),r(t)=\hat{r}(u^{-1}t^n)\in F[u^{-1}t^n]$, such that
 $r(t)=z(t)-q(t)h(t)\in Rf.$
 Let $\hat{r'}(x)=r_0^{-1}\hat{r}(x)\in F[x]$, where $r_0\in F^{\times}$ is the leading coefficient of $\hat{r}(x)$,
 then $r'(t)=\hat{r'}(u^{-1}t^n) $ is monic by definition.

  As $r'(t)=\hat{r'}(u^{-1}t^n)\in Rf$, too, there exists $a(t)\in R$ such that $r'(t)=a(t)f(t)$. Thus, $r'(t)\in F[u^{-1}t^n]$ is a monic polynomial of degree less than $s$ which is right divisible by $f$. This contradicts the definition of $h$ as the minimal central left multiple of $f$. Thus we conclude that $r=0$ and $z=qh \in Rh$, as required.
   \\ (ii)  $E_f$ is a commutative associative ring with identity $1+Rf$.
Define the map $G: E_f\to E_{\hat{h}}$, $G(z+Rf)=z+Rh$
 for all $z\in F[u^{-1}t^n]$.
 $G$ is well-defined and surjective.
  For all $x,y\in F[u^{-1}t^n]$ we have
$G(x+Rf)+G(y+Rf)=(x+Rh)+(y+Rh)=(x+y)+Rh=G(x+y+Rf),$ $G(1+Rf)=1+Rh$, and
$G(x+Rf)G(y+Rf)=(x+Rh)(y+Rh)=xy+Rh=G(xy+Rf),$
yielding that $G$ is an isomorphism. To check injectivity, we note that $G(x+Rf)=0+Rh$ if and only if $x\in Rh$. By Lemma \ref{z in Rf iff z in RFII} (i), this implies $x\in Rf$ and so $x+Rf=0+Rf$.
\end{proof}

 Let $B={\rm Nuc}_r(S_f)$ and $k$ be the number of irreducible factors of $h(t)$ in $R$.

\begin{lemma} \label{prop: Vf is a B-module2}
 The left $R$-module $R/Rf$ is a right $B$-module of rank $k$ via the scalar multiplication $R/Rf\times B\longrightarrow R/Rf$, $(a+Rf)(z+Rf)=az+Rf$
for all $z\in F[u^{-1}t^n]$ and $a\in R$. We can identify $R/Rf$ with $B^{k}$ via a canonical basis.
\end{lemma}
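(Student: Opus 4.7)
My proof plan is as follows. First I would extend the given scalar-multiplication formula to a genuine action of the full division algebra $B$. Identifying $B\cong I(f)/Rf$ as in Theorem~\ref{thm:main2}, where $I(f)=\{g\in R:fg\in Rf\}$ is the idealizer of $Rf$, I set $(a+Rf)\cdot(g+Rf):=ag+Rf$ for any $g\in I(f)$ and $a\in R$. Well-definedness in $g$ is automatic; well-definedness in $a$ uses exactly the idealizer property: if $a'=a+sf$ then $a'g=ag+sfg\in ag+Rf$ because $fg\in Rf$. The formula in the Lemma is the special case $g=z\in F[u^{-1}t^n]\subseteq C(R)\subseteq I(f)$, i.e.\ the restriction of this action to the central subring $E_f\cong E_{\hat h}$ of $B$ (cf.\ Lemma~\ref{z in Rf iff z in RFII}).

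Next I would determine the rank purely by dimension counting. Since $B$ is a division algebra by Theorem~\ref{thm:main2}, every right $B$-module is free, so it suffices to compute $\dim_F V_f/\dim_F B$. Right division by $f$ in $R=D[t;\sigma]$ gives $V_f=R/Rf$ the left $D$-basis $1,t,\dots,t^{m-1}$, so
$$\dim_F V_f=m\cdot[D:F]=m\cdot d^2 n=d^2mn,$$
using $[D:C]=d^2$ and $[C:F]=n$ (the latter because $\sigma|_C$ has order $n$). Theorem~\ref{thm:main2} supplies $\dim_F B=dms$ and $ks=dn$, whence
$$\frac{\dim_F V_f}{\dim_F B}=\frac{d^2mn}{dms}=\frac{dn}{s}=k.$$
Freeness then yields $V_f\cong B^k$, and any right $B$-basis furnishes the canonical identification asserted in the Lemma.

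The one piece of bookkeeping worth flagging is the compatibility between this $B$-action and the ring isomorphism $R/Rh\cong M_k(B)$ of Theorem~\ref{thm:main2}, since later sections will represent left-multiplication maps $L_a$ by matrices over $B$ using this identification. To handle it, I would note that the same formula $(a+Rf)\cdot(g+Rh):=ag+Rf$ extends the action to all of $R/Rh$, well-defined precisely because $Rh\subseteq Rf$ (as $h\in Rf$ by definition of the minimal central left multiple). Transporting through Theorem~\ref{thm:main2} exhibits $V_f$ as the unique simple right $M_k(B)$-module, with the $B$-action of the Lemma arising by restriction along the scalar embedding $B\hookrightarrow M_k(B)$. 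Beyond this tracking, I do not foresee any genuine obstacle in the argument.
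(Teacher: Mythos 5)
Your proposal is correct and follows essentially the same route as the paper: the paper obtains the $B$-module structure by observing that $B={\rm Nuc}_r(S_f)$ is the right nucleus of the Petit algebra $S_f=R/Rf$ (which is the same as your idealizer description $B\cong I(f)/Rf$, with the same well-definedness check), and then deduces the rank from $\dim_F V_f=d^2mn$ and $[B:F]=dms$ exactly as you do. Your extra remarks on extending the action and on compatibility with $R/Rh\cong M_k(B)$ are sound but go beyond what the paper's two-line proof records.
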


\begin{proof}
 Since the Petit algebra $S_f=R/Rf$ with its multiplication  $ab=ab \, {\rm mod}_r\, f$ is a nonassociative unital algebra with right nucleus $B$, $R/Rf$ is a right $B$-module via the given scalar multiplication.
 As $R/Rf$ is a vector space of dimension $d^2mn$ over $F$,  $R/Rf$ is free of rank $k$ over $B$.
\end{proof}

Let $\nu\in D^\times$ and $\rho\in {\rm Aut}(D)$, and define $F'={\rm Fix}(\rho)\cap F$.
We assume in the following that $F/F'$ is finite-dimensional. Let
 $s$ be the degree of $B$ over $E_{\hat{h}}$. We assume $f$ is not right-invariant, i.e. $k>1$.

Let $l<k=dn/s$ be a positive integer. Define the set
$S_{n,m,l}(\nu,\rho, f)=\lbrace a+Rh\,|\, a\in P\rbrace\subset R/Rh,$
where
$$P=\lbrace d_0+d_1t+\dots+d_{lm-1}t^{lm-1}+\nu\rho(d_0)t^{lm}\,|\, d_i\in D\rbrace\subset D[t;\sigma].$$
$S_{n,m,l}(\nu,\rho, f)$ is a vector space over $F'$ of dimension $d^2nml[F:F']$.
 $R/Rf$ is a right $B$-module of rank $k$, as shown above.
Let $L_a:R/Rf\to R/Rf$ be the left multiplication map  $L_a(b+Rf)=ab+Rf$. Then
$L_a$ is $B$-linear, as we have
 $a( x \alpha)= ( ax) \alpha$ for all $\alpha\in B$, $a,x\in R/Rf$, and therefore
 $L_a(  x \alpha )=  L_a(x) \alpha $ for all $\alpha\in B$. Thus $L_a\in {\rm End}_{B}(R/Rf)$ and
 $$ R/Rh \cong M_k(B)\cong {\rm End}_{B}(B^k)={\rm End}_{B}(R/Rf)$$
 by Theorem \ref{thm:main2}.  Hence we have well-defined maps
$$L: S_{n,m,l}(\nu,\rho, f) \to {\rm End}_B(R/Rf), a\mapsto L_a,$$
$$\lambda: S_{n,m,l}(\nu,\rho, f) \to M_k(B), a\mapsto L_a \mapsto M_a,$$
 where $M_a$ is the matrix representing $L_a$
with respect to a $B$-basis of $R/Rf$. We denote  the image of $S_{n,m,l}(\nu,\rho,h)$ in $M_{k}(B)$ by
$$\mathcal{C}_{n,m,l}=\lbrace M_a\mid a\in S_{n,m,l}(\nu,\rho,f)\rbrace.$$
The code $\mathcal{C}=\mathcal{C}_{n,m,l}$  is $F'$-linear by construction, and a generalized rank metric code.
If $\mathcal{C}$ has minimum distance $d_{\mathcal{C}}$, the Singleton-like bound canonically generalizes to the bound
$${\rm dim}_{F'}(\mathcal{C})\leq k(k-d_{\mathcal{C}}+1)[B:F'],$$
 with $[B:F']=s[F:F']$.
If $d_{\mathcal{C}}=k-l+1$, then
${\rm dim}_{F'}(S_{n,m,l}(\nu,\rho, f))=d^2nml/dms [B:F']=d^2mnl [F:F'] =lk [B:F']=lkdms[F:F']$.
 Thus if $d_{\mathcal{C}}=k-l+1$, then $\mathcal{C}$ attains this bound and $\mathcal{C}$ is
a  \textit{maximum rank distance code} in $M_k(B)$.

We will usually deal with the case that ${\rm deg }(h)=dmn$, so that $B=E_{\hat{h}}$ is a field, $s=1,$ and
$\mathcal{C}_{n,m,l}\subset M_{dn}(E_f)$. Note that if
$l=1$ and $d_\mathcal{{C}}=k$
this generalized Singleton-like bound is achieved trivially: we obtain examples of MRD codes in $M_k(B)$.
This arises when we look at division algebras $S_{n,m,1}(\nu,\rho, f)$
and the matrices representing their left multiplication, cf. Remark \ref{remark} and Corollary \ref{cor:18}.

%
%

\section{The rank of the matrix that corresponds to the element $a+Rh$}

Let $ R=D[t;\sigma]$ be as in Section \ref{sec:matrices}, and $f\in R$ be an irreducible monic polynomial of degree $m>1$ 
with minimal central left multiple $h$. Let $B={\rm Nuc}_r(S_f)$.
We have ${\rm deg}(\hat{h})=km$ and $R/Rh\cong M_{k}(B)$ as $E_{\hat{h}}$-algebras by Theorem \ref{thm:main2}. Let $\Psi:R/Rh\to M_k(B),$ $\Psi(a+Rh)=M_a$, be this isomorphism. For $M_a\in M_k(B)$, consider the right $B$-linear map $L_{M_a}:M_k(B)\to M_k(B)$, $L_{M_a}:X\mapsto M_aX.$
 Then we obtain the following generalization of \cite{Sheekey}, Proposition 7 (which was only proved for $f$ with coefficients in a finite field, i.e. for the special case that ${\rm deg}(h)=nm$ is maximal):

\begin{theorem}\label{rank of a polynomial general}
Let ${\rm deg}(h)=km$.
 Then $M_a\in M_{k}(B)$ and
$${\rm dim}_B({\rm im}(L_{M_a}))=
 k^2 - \frac{k}{m}{\rm deg}({\rm gcrd}(a,h)),\quad  {\rm colrank}(M_a)=k-\frac{1}{m}{\rm deg}({\rm gcrd}(a,h))$$
 for all $a+Rh\in R/Rh$.
 In particular, if ${\rm deg}(h)=dmn$, then $M_a\in M_n(E_{\hat{h}})$, and
$${\rm rank}(M_a)= dn - \frac{1}{m}{\rm deg}({\rm gcrd}(a(t),h(t)).$$
\end{theorem}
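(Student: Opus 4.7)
The plan is to generalize the argument of \cite{Sheekey}, Proposition 7 to our noncommutative setting by computing inside $R/Rh$ and then transporting across the isomorphism $\Psi$. Under $\Psi(a+Rh) = M_a = A$, the left multiplication $L_a$ on $R/Rh$ corresponds to $L_A$ on $M_k(B)$, and both are right $B$-module endomorphisms, so they have the same $B$-dimensional image.

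To identify this image, observe that $\text{im}(L_a) = (Ra + Rh)/Rh$. Since $R = D[t;\sigma]$ is a left (and right) principal ideal domain, the sum $Ra + Rh$ is a principal left ideal: writing $g = gcrd(a,h)$ for the monic greatest common right divisor, both $a$ and $h$ lie in $Rg$, and any common right divisor of $a$ and $h$ right-divides $g$, giving $Ra + Rh = Rg$. Factoring $h = h'g$ for some $h' \in R$, right multiplication by $g$ yields an injective $R$-linear map $R \to R$ sending $Rh'$ onto $Rh$, hence an $R$-module isomorphism $R/Rh' \cong Rg/Rh$. As a left $D$-module this is free of rank $\deg(h') = \deg(h) - \deg(g) = km - \deg(gcrd(a,h))$.

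Next I would convert this to $\dim_B$ by comparing $F$-dimensions. Since $\dim_F D = d^2 n$ we obtain $\dim_F(\text{im}(L_a)) = (km - \deg(g))\,d^2 n$, while Theorem \ref{thm:main2} gives $[B:F] = dms$ with $s = dn/k$, so $[B:F] = d^2mn/k$. Dividing yields $\dim_B(\text{im}(L_A)) = k^2 - (k/m)\deg(gcrd(a,h))$, which is the first formula. For the column rank, decompose $M_k(B) = V_1 \oplus \cdots \oplus V_k$ as a right $B$-module, where $V_j$ is the subspace of matrices supported on the $j$-th column. This decomposition is preserved by $L_A$, and on each $V_j \cong B^k$ the map $L_A$ acts as $v \mapsto Av$; consequently $\dim_B(\text{im}(L_A)) = k \cdot \dim_B(AB^k) = k \cdot \text{colrank}(M_a)$, which gives $\text{colrank}(M_a) = k - (1/m)\deg(gcrd(a,h))$.

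For the special case $\deg(h) = dmn$, Theorem \ref{thm:main2} forces $s = 1$, so $B = E_{\hat{h}}$ is a field with $k = dn$; over a field the column rank coincides with the usual rank, giving the final displayed formula. The only subtle point in the argument is keeping careful track of the three scalar rings $D$, $F$, $B$ when converting dimensions, in particular the identity $[B:F] = d^2mn/k$ extracted from Theorem \ref{thm:main2}; once that arithmetic is in place, the remaining steps are essentially formal manipulations of left ideals in a principal ideal domain.
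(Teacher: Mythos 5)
Your column-rank reduction (decomposing $M_k(B)$ into columns so that $\dim_B(\mathrm{im}(L_A))=k\cdot\mathrm{colrank}(A)$), your dimension bookkeeping $[B:F]=d^2mn/k$, and the specialization to $\deg(h)=dmn$ (forcing $s=1$, $k=dn$, $B=E_{\hat h}$) are all fine and agree with the paper. The gap is in your very first identification. For $L_a(b+Rh)=ab+Rh$ the image is $\{ab+Rh\mid b\in R\}=(aR+Rh)/Rh$, a \emph{right} ideal of $R/Rh$, not the left ideal $(Ra+Rh)/Rh$ that you go on to compute. The left ideal $Ra+Rh$ is indeed $R\cdot\mathrm{gcrd}(a,h)$, but it is the image of \emph{right} multiplication by $a$. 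The right ideal $aR+hR$ is generated by the greatest common \emph{left} divisor, so your route, carried out correctly, would produce $k^2-\frac{k}{m}\deg(\mathrm{gcld}(a,h))$ and leave you owing a proof that $\deg(\mathrm{gcld}(a,h))=\deg(\mathrm{gcrd}(a,h))$. That equality is true here (it follows a posteriori from the theorem, using that $h$ is central), but it is not free, and the left/right mix-up is load-bearing: it is precisely what makes the gcrd appear in your formula.

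The paper avoids this by computing the kernel instead of the image: it identifies $\ker(L_A)$ with $\mathrm{Ann}_r(a)=\{v+Rh\mid av\in Rh\}$, writes $\gamma=\mathrm{gcrd}(a,h)$, $h=\delta\gamma=\gamma\delta$ (via Lemma \ref{right divisors are left divisors}) and $a=b\gamma$ with $\mathrm{gcrd}(b,\delta)=1$, and shows by left Euclidean division that $av\in Rh$ exactly when $v=\delta u$ with $\deg(u)<\deg(\gamma)$; hence $\dim_D(\mathrm{Ann}_r(a))=\deg(\gamma)$, and rank--nullity gives the stated formula with the gcrd entering for the right reason. To repair your argument, either switch to this annihilator computation, or work with the right ideal $aR+hR$ and supply the missing comparison of $\deg(\mathrm{gcld})$ with $\deg(\mathrm{gcrd})$ for a two-sided modulus $h$.
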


\begin{proof}
For each $M_a\in M_k(B)$, define ${\rm Ann}_r(M_a)=\lbrace N\in M_k(B)\,|\, M_aN=0\rbrace$. Then ${\rm Ann}_r(M_a)$ is the kernel of the endomorphism $L_{M_a}:M_k(B)\to M_k(B)$.
By the Rank-Nullity Theorem for free right $B$-modules of finite rank \cite[ch. IV, Cor. 2.14]{H}, it follows that
$$k^2={\rm dim}_B({\rm im}(L_{M_a}))+{\rm dim}_{B}({\rm Ann}_r(M_a)).$$
We conclude that
${\rm dim}_{B}(im(L_{M_a}))= k^2-{\rm dim}_{B}({\rm Ann}_r(M_a)).$
 Now for each $b+Rh$, $M_aM_b=0$ if and only if $\Psi(a+Rh)\Psi(b+Rh)=0$. As $\Psi$ is multiplicative, this is true if and only if $\Psi((a+Rh)(b+Rh))=0$. This means $(a+Rh)(b+Rh)=0$.
Hence it is clear that ${\rm Ann}_r(M_a)\cong {\rm Ann}_r(a)$, where
$${\rm Ann}_r(a)=\lbrace b+Rh\in R/Rh \,|\, (a+Rh)(b+Rh)=0+Rh\rbrace,$$
    so ${\rm dim}({\rm Ann}_r(M_a))={\rm dim}({\rm Ann}_r(a))$.
\\
Let $\gamma=gcrd(a,h)$ so $h=\delta\gamma$ for some $\delta\in R$. As $h\in C(R)$ and $R$ is a domain, we also have $h=\gamma\delta$ by Lemma \ref{right divisors are left divisors}. Let $b\in R$ be the unique element such that $a=b\gamma$. Then $gcrd(b,\delta)=1$, else $\gamma$ is not the greatest common right divisor of $a$ and $h$.\\
Let $v\in R$. By the left Euclidean division algorithm, there exist unique $u,w\in R$ such that $v=\delta u+w$ where $deg(w)<{\rm deg}(\delta)$ and ${\rm gcld}(w,\delta)=1$. It follows that
$av = a\delta u+aw = b\gamma\delta u+ b\gamma w = bhu+b\gamma w,$
therefore $av+Rh=b\gamma w+Rh$.\\
Suppose $b\gamma w\equiv 0\: {\rm mod}_rh$. As $gcrd(b,\delta)=1$, there exist $c,d\in R$ such that $cb+d\delta=1$, so $cb\gamma+d\delta\gamma=\gamma.$
As $\delta\gamma=h$, this implies $cb\gamma\equiv\gamma\: {\rm mod}_rh$. Hence
$\gamma w\equiv cb\gamma w\equiv 0\: {\rm mod}_rh.$
However, ${\rm deg}(w)<deg(\delta)$ so ${\rm deg}(\gamma w)< {\rm deg}(\gamma\delta)={\rm deg}(h)$; due to this, $\gamma w\equiv 0\: {\rm mod}_rh$ implies that $\gamma w=0$. As $\gamma\neq 0$ and $R$ is a domain, we conclude that $w=0$.\\
Hence, $(a+Rh)(v+Rh)=0+Rh$ if and only if $v=\delta u$ where ${\rm deg}(u)<{\rm deg}(\gamma)$. As $\delta$ is uniquely defined by $a$ and $h$, every element of ${\rm Ann}_r(a)$ is determined by $u\in R$ such that ${\rm deg}(u)<{\rm deg}(\gamma)$.
Thus
  \begin{align*}
  {\rm Ann}_r(a)=&\lbrace v+Rh\in R/Rh\,|\, (a+Rh)(v+Rh)=0+Rh\rbrace\\
  =&\{\delta u\,|\,u\in R, {\rm deg}(u)<deg(\gamma) \}
  \cong R_{deg(\gamma)}=\{g\in R\,|\, {\rm deg}(g)< deg(\gamma)\}.
  \end{align*}
As $\lbrace 1, t,\dots, t^{ deg(\gamma)-1}\rbrace$ is a $D$-basis
 for the free left $D$-module  $R_{\gamma}$, it follows that ${\rm dim}_{D}({\rm Ann}_r(a))={\rm deg}(\gamma)$,
 so ${\rm dim}_{F}({\rm Ann}_r(a))={\rm deg}(\gamma)d^2n$.
  Since ${\rm dim}_{E_{\hat{h}}}(B)=s^2=d^2n^2/k^2$ and $[E_{\hat{h}}:F]=km/n$, we obtain 
 ${\rm dim}_F(B)=d^2mn/k$.
Hence we get
$${\rm dim}_{B}({\rm Ann}_r(a))=\frac{{\rm deg}(\gamma)d^2nk}{d^2mn}=\frac{{\rm deg}(\gamma)k}{m},$$
and so
$${\rm dim}_{B}({\rm im}(L_A))= k^2-{\rm dim}_{B}({\rm Ann}_r(M_a))=k^2-\frac{k}{m}{\rm deg}(\gamma).$$
Let $\underline{c_i}$, respectively $\underline{r_i}$, denote the columns and rows of $M_a$  and $\underline{x_i}$ denote the columns of $X$. Computing the matrix using dot product notation we have
\begin{equation*}
M_aX=\left(
\begin{array}{ccc}
\underline{r_1}\cdot\underline{x_1}&\dots&\underline{r_1}\cdot\underline{x_k}\\
\vdots & \ddots & \vdots\\
\underline{r_k}\cdot\underline{x_1}&\dots& \underline{r_k}\cdot\underline{x_k}
\end{array}\right)
\end{equation*}
The $i^{th}$ column of $M_aX$ is equal to \begin{equation*}
\begin{pmatrix}
\underline{r_1}\cdot\underline{x_i}\\
\vdots\\
\underline{r_k}\cdot\underline{x_i}
\end{pmatrix}
=\underline{c_1}\lambda_1+\dots+\underline{c_k}\lambda_k
\end{equation*}
for some $\lambda_j\in B$. Hence the dimension of the right $B$-module generated by the $i^{th}$ column of
$M_aX$ is exactly the column rank of $M_a$. As there are $k$ columns of $M_aX$, it follows that ${\rm dim}_B({\rm im}(L_{M_a}))=k \,{\rm colrank}(M_a)$.
\end{proof}

All of the above applies in particular to the special case that
 $K/F$ is a field extension and $\sigma\in {\rm Aut}_F(K)$ of finite order $n$, $R=K[t;\sigma]$ and  $C(R) = F[t^n]\cong F[x]$. Let $f\in R$ be a monic irreducible polynomial of degree $m>1$, $B={\rm Nuc}_r(S_f)$,
and $h(t)=\hat{h}(t^n)$ its minimal central left multiple, ${\rm deg}(\hat{h})=km$.
Then $\Psi:R/Rh\to M_k(B),$ $\Psi(a+Rh)=M_a$ is an $E_f$-algebra isomorphism. For each $M_a\in M_k(B)$, we have the endomorphism $L_{M_a}:M_k(B)\to M_k(B)$ by $L_{M_a}:X\mapsto M_aX.$
 Analogously to Theorem \ref{rank of a polynomial general} we can prove:

\begin{theorem} [for finite fields and thus ${\rm deg}(h)=nm$ maximal, cf. \cite{Sheekey}, Proposition 7] \label{rank of a polynomial km}
Suppose that ${\rm deg}(h)=km$, then for all $a+Rh\in R/Rh$ we have
$${\rm dim}_B({\rm im}(L_{M_a}))=
 k^2 - \frac{k}{m}{\rm deg}({\rm gcrd}(a,h)) ,\quad  {\rm colrank}(M_a)=k-\frac{1}{m}{\rm deg}({\rm gcrd}(a,h)).$$
 In particular, if ${\rm deg}(h)=mn$ then $M_a\in M_n(E_{\hat{h}})$ and
${\rm rank}(M_a)= n - \frac{1}{m}{\rm deg}({\rm gcrd}(a,h))$.
 \end{theorem}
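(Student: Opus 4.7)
The plan is to show that the proof of Theorem \ref{rank of a polynomial general} specializes cleanly to the case where $D=K$ is a field, i.e.\ $d=1$. Since the general statement has already been established, my strategy is to trace through the argument carefully under the substitution $d=1$, verifying that each ingredient (the isomorphism $\Psi$, the Rank--Nullity argument, the left Euclidean division step, and the translation from the dimension of the annihilator to the column rank) carries over without modification. In the field setting the degree formulae collapse: $[E_{\hat{h}}:F]=km/n$, $[B:E_{\hat{h}}]=s^2=n^2/k^2$, and $\dim_F(B)=mn/k$, and these are exactly the values needed to reproduce the formula $\dim_B(\mathrm{Ann}_r(a))=k\deg(\gamma)/m$.

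More concretely, I would begin by invoking Theorem \ref{thm:main2} in the field case to obtain the isomorphism $\Psi:R/Rh\to M_k(B)$ and, for $A=\Psi(a+Rh)$, identify $\mathrm{Ann}_r(A)\subset M_k(B)$ with $\mathrm{Ann}_r(a)=\{v+Rh\in R/Rh\mid (a+Rh)(v+Rh)=0\}$ via $\Psi$. Then, setting $\gamma=\gcrd(a,h)$ and $h=\delta\gamma=\gamma\delta$ (Lemma \ref{right divisors are left divisors}), and writing $a=b\gamma$ with $\gcrd(b,\delta)=1$, the same left Euclidean division argument as in the proof of Theorem \ref{rank of a polynomial general} shows that $(a+Rh)(v+Rh)=0$ iff $v=\delta u$ with $\deg(u)<\deg(\gamma)$; this identifies $\mathrm{Ann}_r(a)$ with the free left $K$-module $R_{\deg(\gamma)}=\{g\in R\mid \deg(g)<\deg(\gamma)\}$ of $K$-dimension $\deg(\gamma)$, hence $\dim_F(\mathrm{Ann}_r(a))=n\deg(\gamma)$.

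Next I would compute $\dim_B(\mathrm{Ann}_r(a))=\dim_F(\mathrm{Ann}_r(a))/\dim_F(B)=n\deg(\gamma)/(mn/k)=k\deg(\gamma)/m$. Rank--Nullity for finitely generated free right $B$-modules then yields $\dim_B(\mathrm{im}(L_A))=k^2-k\deg(\gamma)/m$. Finally, the identical dot-product/column argument from the end of the proof of Theorem \ref{rank of a polynomial general} shows that $\dim_B(\mathrm{im}(L_A))=k\cdot \mathrm{colrank}(A)$, yielding $\mathrm{colrank}(M_a)=k-\deg(\gamma)/m$.

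The special case $\deg(h)=mn$ corresponds to $k=n$ and $s=1$, so $B=E_{\hat{h}}$ is a field; the column rank then coincides with the ordinary rank and we obtain $\mathrm{rank}(M_a)=n-\deg(\gcrd(a,h))/m$. I do not expect any genuine obstacle: the only subtlety is bookkeeping the degree formulae consistently under $d=1$ (in particular, that $\dim_F(B)=mn/k$ cancels with $\dim_F(\mathrm{Ann}_r(a))=n\deg(\gamma)$ to leave exactly the factor $k/m$), but this is a direct specialization rather than a new argument.
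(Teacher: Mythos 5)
Your proposal is correct and matches the paper's intent exactly: the paper proves this theorem by stating it is "analogous to Theorem \ref{rank of a polynomial general}," i.e.\ by specializing the general argument to $d=1$, which is precisely what you carry out, with the degree bookkeeping ($\dim_F(B)=mn/k$, $\dim_F(\mathrm{Ann}_r(a))=n\deg(\gamma)$, $k=n$ and $s=1$ when $\deg(h)=mn$) done correctly.
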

This generalizes  \cite[Remark 6]{Sheekey}.

%
%

\section{Using the norm of $D(t; \sigma )$ to investigate $f$}

\subsection{The algebra $(D(x), \widetilde{\sigma}, ux )$}\label{sec_norm1}

Let $C/F$ be a finite cyclic field extension of degree $n$ with ${\rm Gal}(C/F) = \langle \sigma\rangle$. Let $D$ be a finite dimensional division algebra of degree $d$ with center $C$ and suppose that $\sigma$ extends to a $C$-algebra automorphism of $D$ that we call $\sigma$, too.
Let $R=D[t;\sigma]$ as in Section \ref{sec:matrices}.
Then there exists $u \in D^\times$ such that $\sigma^n = i_u$ and $\sigma(u) = u$. These two relations determine $u$ up to multiplication with elements from $F^\times$ \cite[Lemma 19.7]{Pierce}.

 The quotient algebra $(D,\sigma,a)=D[t;\sigma]/(t^n-a)D[t;\sigma]$, where $f(t)=t^n-a\in D[t;\sigma]$ with
$d\in F^\times$, is called a \emph{generalized cyclic algebra}.
The special case where $D=C$  yields the cyclic algebra $(C/F,\gamma,a)$ \cite[p.~19]{J96}.

Let
 $D(t;\sigma) = \{f/g \,|\, f \in D[t;\sigma], g \in C(D[t;\sigma])\}$
be the ring of central quotients of $D[t;\sigma]$. Let $\widetilde{\sigma}$ denote the extension of $\sigma$ to $D(x)$ that fixes $x$ \cite[Lemma 2.1.]{TH}.
Then $C(D(t; \sigma)) = {\rm Quot}(C(D[t;\sigma])) = F(x)$,  $x =u^{-1}t^n$, is the center of $D(t;\sigma)$,
 where ${\rm Quot}(U)$ denotes the quotient field of an integral domain $U$. More precisely,
 $D(t; \sigma ) \cong (D(x), \widetilde{\sigma}, ux )$
 is a generalized cyclic algebra of degree $dn$ over its center $F(x)$ and a division algebra \cite[Theorems 2.2, 2.3]{TH}.

   Let $N$ be the reduced norm of $(D(x), \widetilde{\sigma}, ux )$.

\begin{lemma}\label{le:easy2}
Let $f\in R$. If $N(f)$ is irreducible in $F[x],$ then $f$ is irreducible in $R$.
\end{lemma}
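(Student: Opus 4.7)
I would prove the lemma by contrapositive, exploiting the multiplicativity of the reduced norm $N$ of the central simple algebra $(D(x),\widetilde{\sigma},ux)$ over its centre $F(x)$. The outline is: suppose $f = gh$ is a proper factorisation in $R$, then apply $N$ to get a factorisation $N(f)=N(g)N(h)$ in $F(x)$, and finally argue that this factorisation is in fact a nontrivial factorisation in $F[x]$, contradicting the irreducibility assumption.

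\textbf{Step 1 (set-up).} Assume $f\in R$ is reducible, i.e.\ $f=gh$ for some $g,h\in R$ with $1\le{\rm deg}_t(g),{\rm deg}_t(h)<{\rm deg}_t(f)$. Since the inclusion $R\hookrightarrow D(t;\sigma)\cong (D(x),\widetilde{\sigma},ux)$ is a ring homomorphism, the identity $f=gh$ persists in the cyclic generalized crossed product, and by multiplicativity of the reduced norm,
\[
N(f)=N(g)\,N(h)\quad\text{in }F(x).
\]

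\textbf{Step 2 (norms of $R$-elements lie in $F[x]$).} I would next verify that for any $a\in R$, $N(a)\in F[x]$ (not merely in $F(x)$). The cleanest way is to realise the reduced norm on $a\in R$ by a suitable determinant of the matrix of left multiplication by $a$ on a $D(x)$-basis of $(D(x),\widetilde{\sigma},ux)$, after first taking a reduced norm from $D(x)$ down to $F(x)$; since the entries of such a matrix for $a\in R$ are in $D[x]$, the result lies in $F[x]$. (The special case that $D$ is a field is classical, and the general case is recorded in \cite{TH}; I would just cite this.) In particular $N(f),N(g),N(h)\in F[x]$.

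\textbf{Step 3 (degree estimate).} The crucial point is that $N(g)$ and $N(h)$ are non-units in $F[x]$, i.e.\ of positive $x$-degree, so that $N(f)=N(g)N(h)$ is a nontrivial factorisation. Since the units of $F[x]$ are $F^{\times}$ and the units of $R$ are $D^{\times}$, it suffices to show: \emph{if $a\in R$ has ${\rm deg}_t(a)\ge 1$, then ${\rm deg}_x(N(a))\ge 1$}. One way is to note that $t^n=ux$ and that the reduced characteristic polynomial and hence the norm respect the $x$-grading: writing $a=\sum a_i t^i$ with $a_m\neq0$, the reduced norm is homogeneous of $x$-degree $d\cdot m/n\cdot n = dm$ if we weight $t$ with $x$-weight $1/n$; more concretely, $N(t)^n=N(u)\,N(x)=N(u)\,x^{dn}$ (as $x$ lies in the centre and has reduced norm $x^{dn}$), so $N(t)$ has $x$-degree $d$, and a Newton-polygon type argument applied to $a$ forces $N(a)$ to have positive $x$-degree. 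Thus $N(g),N(h)\notin F^{\times}$, and $N(f)$ is reducible in $F[x]$, the desired contradiction.

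\textbf{Main obstacle.} The routine part is the multiplicativity argument; the only substantive point is Step 3, showing that $N$ sends genuine non-units of $R$ to genuine non-units of $F[x]$, which requires knowing that the reduced norm respects the $t$-degree in the predicted way. If the paper has already set up the reduced norm of $(D(x),\widetilde{\sigma},ux)$ on $R$ carefully in \cite{TH} (as suggested by the reference), this reduces to citing the appropriate degree formula; otherwise, I would work it out by the computation with $N(t)$ sketched above, which depends only on $t^n=ux$ and centrality of $x$.
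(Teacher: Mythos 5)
Your proposal is correct and follows essentially the same route as the paper: factor $f=gh$, apply multiplicativity of the reduced norm, and observe that $N(g),N(h)\in F[x]$ to contradict irreducibility of $N(f)$. The only difference is that you explicitly verify (via the degree formula $\deg_x N(a)=d\cdot\deg_t(a)$, available from Theorem~\ref{thm:norm3} and \cite{PT}) that $N(g)$ and $N(h)$ are non-units of $F[x]$, a point the paper's one-line proof leaves implicit; this is a worthwhile detail but not a different argument.
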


\begin{proof}
  If $f=gp$ for $g,p\in R$ then $N(f)=N(g)N(p)$ is reducible in $F[x],$ since both $N(g)$ and $N(p)$ lie in $F[x]$, which immediately yields the assertion.
\end{proof}

From now on, we assume  that
$$D=(E/C,\gamma,a) \text{ is a cyclic  division algebra over } C \text{ of degree } d,$$
$$\sigma|_E\in {\rm Aut}(E) \text{ such that }\gamma\circ\sigma=\sigma\circ\gamma \text{ and } u\in E.$$
 Then $\sigma|_E$ has order $n$. Write $m=kn+r$ for some $0\leq r<n$. Let $f=\sum_{i=0}^ma_it^i\in R$ be a polynomial such that $a_0\not=0$
 and $h\in R$ be the minimal central left multiple of $f$ in $R$.

\begin{theorem} \label{thm:norm3}\cite{PT}
  For $f\in E[t;\sigma]\subset D[t;\sigma]$, we have
  $$N(f(t))=N_{E/F}(a_0)+\dots +(-1)^{dr(n-1)}N_{E/F}(a_m)N_{E/C}(u)^{m}x^{dm}.$$
  \end{theorem}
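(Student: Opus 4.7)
The plan is to exploit an intermediate subfield between $F(x)$ and $D(t;\sigma)$ and to apply transitivity of the reduced norm. Set $E' = \mathrm{Fix}(\sigma|_E)$. Since $\sigma^n = i_u$ with $u \in E$ acts as the identity on the commutative field $E$, the order of $\sigma|_E$ equals the order $n$ of $\sigma|_C$, so $[E:E'] = n$. The assumption $\gamma\sigma = \sigma\gamma$ then shows that $\gamma$ restricts to an automorphism of $E'$ of order $d$ with $\mathrm{Fix}(\gamma|_{E'}) = C \cap E' = F$; hence $E'(x)/F(x)$ is cyclic Galois of degree $d$, generated by $\gamma$. The subalgebra of $D(t;\sigma)$ generated by $E$ and $t$ is isomorphic to the cyclic algebra $A' := (E(x), \widetilde{\sigma}|_{E(x)}, ux)$ of degree $n$ over $E'(x)$ (note $ux \in E'(x)$ since $\sigma(u) = u$). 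A short calculation, using that $\gamma^j$ does not fix $E'$ pointwise for $j \neq 0$, verifies that $A'$ is precisely the centralizer of $E'(x)$ inside $D(t;\sigma) \cong (D(x),\widetilde{\sigma},ux)$, so the transitivity of reduced norms yields
\[
N(f) \;=\; N_{E'(x)/F(x)}\bigl(\widetilde{N}(f)\bigr) \qquad \text{for all } f \in A',
\]
where $\widetilde{N}$ denotes the reduced norm of $A'$ over $E'(x)$.

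To compute $\widetilde{N}(f)$ I represent left multiplication by $f = \sum_{i=0}^m a_i t^i$ as a matrix $M_f$ in the right $E(x)$-basis $\{1, t, \ldots, t^{n-1}\}$ of $A'$. Using $t^n = ux$, $t\alpha = \sigma(\alpha) t$, $\sigma(u) = u$ and $\widetilde{\sigma}(x) = x$, and writing $k+i = q'n + r'$ with $0 \le r' < n$, one obtains
\[
(M_f)_{r, i} \;=\; \sum_{\substack{0 \le k \le m \\ k + i \,\equiv\, r\ (\mathrm{mod}\ n)}} \sigma^{-r}(a_k)\, (ux)^{\lfloor (k+i)/n\rfloor}.
\]
Then $\widetilde{N}(f) = \det(M_f)$ is a polynomial in $x$ with coefficients in $E'$.

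Next I isolate the constant and top coefficients of this polynomial. The constant term in $x$ receives contributions only from $k = 0$ in every entry, where $M_f$ becomes the diagonal matrix $\mathrm{diag}(\sigma^{-r}(a_0))$ of determinant $\prod_{r=0}^{n-1} \sigma^{-r}(a_0) = N_{E/E'}(a_0)$. For the $x^m$-coefficient, an examination of $\det(M_f) = \sum_\pi \mathrm{sgn}(\pi) \prod_i (M_f)_{\pi(i),i}$ shows that $\sum_i \lfloor(k_i+i)/n\rfloor \le m$ with equality only at $k_i = m$ for every $i$; this forces $\pi$ to be the cyclic shift $i \mapsto (i+r) \bmod n$, where $r = m \bmod n$, of sign $(-1)^{r(n-1)}$. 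Thus
\[
\widetilde{N}(f) \;=\; N_{E/E'}(a_0) + \cdots + (-1)^{r(n-1)}\, N_{E/E'}(a_m)\, u^m\, x^m.
\]

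Applying $N_{E'(x)/F(x)} = \prod_{j=0}^{d-1} \gamma^j$ (which fixes $x$ and hence multiplies the $x$-degree by $d$), and using the field-norm transitivity $N_{E'/F}\circ N_{E/E'} = N_{E/F}$ on the coefficients, the constant term of $N(f)$ becomes $N_{E/F}(a_0)$ and the leading term becomes $(-1)^{dr(n-1)} N_{E/F}(a_m) N_{E'/F}(u)^m x^{dm}$. Since $u \in E' \subset E$ and both $\mathrm{Gal}(E'/F)$ and $\mathrm{Gal}(E/C)$ are generated by $\gamma$, the norms $N_{E'/F}(u)$ and $N_{E/C}(u)$ coincide, giving the claimed formula. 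The main obstacle is the combinatorial uniqueness argument for the $x^m$-coefficient of $\det(M_f)$: one must rule out all other tuples $(k_0, \ldots, k_{n-1})$ and permutations $\pi$ reaching the maximum $x$-exponent $m$, and this is precisely what produces both the sign $(-1)^{dr(n-1)}$ and the exponent $m$ on $N_{E/C}(u)$.
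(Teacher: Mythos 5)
Your argument is essentially correct; note, however, that the paper itself offers no proof of this statement --- it is imported verbatim from the companion paper \cite{PT} --- so I can only assess your derivation on its own terms and against the general strategy there. Your route (pass to the centralizer of $E'(x)$, $E'=\mathrm{Fix}(\sigma|_E)$, which is the degree-$n$ cyclic algebra $(E(x),\widetilde{\sigma}|_{E(x)},ux)$, compute its reduced norm as an $n\times n$ determinant, then push down by the field norm $N_{E'(x)/F(x)}$ via transitivity of reduced norms) is a legitimate alternative to computing the $dn\times dn$ determinant of left multiplication on $D(t;\sigma)$ over the strictly maximal subfield $E(x)$ in one step; it buys a smaller determinant at the cost of the centralizer/norm-transitivity machinery, and the combinatorics of the leading term (the identity $\sum_i\lfloor(k_i+i)/n\rfloor=\frac{1}{n}\sum_i k_i$ when $i\mapsto (k_i+i)\bmod n$ is a permutation, forcing $k_i=m$ for all $i$, and the sign $(-1)^{n-\gcd(n,r)}=(-1)^{r(n-1)}$ of the $r$-shift) checks out. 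Two small imprecisions you should repair: the matrix of constant terms of $M_f$ is lower triangular, not diagonal (entries $(r,i)$ with $r>i$ pick up $\sigma^{-r}(a_{r-i})$), though its determinant is still $\prod_{r}\sigma^{-r}(a_0)=N_{E/E'}(a_0)$; and the identification of $A'$ with the centralizer of $E'(x)$ is most cleanly done by the double centralizer theorem ($\dim_{E'(x)}C_{D(t;\sigma)}(E'(x))=(dn)^2/d\cdot\frac{1}{d}=n^2=\dim_{E'(x)}A'$ together with the obvious inclusion $A'\subseteq C_{D(t;\sigma)}(E'(x))$) rather than the ``short calculation'' you allude to. With these touched up, the proof is complete; the final identification $N_{E'/F}(u)=N_{E/C}(u)=\prod_{j=0}^{d-1}\gamma^j(u)$ for $u\in E'$ is correct and uses exactly the hypotheses $u\in E\cap\mathrm{Fix}(\sigma)$ and $\gamma\circ\sigma=\sigma\circ\gamma$.
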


  \begin{theorem}  \label{Sheekey Theorem 3 generalisation}
 Suppose that  ${\rm deg}(h)=dmn$.
 \\ (i)\cite[Theorem 14 (i)]{PT} If $\hat{h}$ is irreducible in $F[x]$ then $f$ is irreducible in $R$.
 \\ (ii)\cite[Theorem 14 (ii)]{PT} If $f$ is irreducible then $N(f)$ is irreducible in $F[x]$.
 \\ (iii) If  $f\in E[t;\sigma]$, then  $N(f)= (-1)^{dr(n-1)}N_{E/F}(a_m)N_{E/C}(u)^{m} \hat{h}$ and
 $$N_{E/F}(a_0)=(-1)^{dr(n-1)}N_{E/F}(a_m)N_{E/C}(u)^{m}h_0,$$
 if $h_0$ denotes the constant term of $\hat{h}$.
 \end{theorem}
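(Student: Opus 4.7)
Parts (i) and (ii) are direct citations of \cite[Theorem 14]{PT}, so only (iii) requires genuine argument here. The strategy is to identify $N(f)$ with $c\hat{h}$ as elements of $F[x]$ by matching their degrees, leading coefficients, and constant terms, using the minimality of $h$ to secure the required divisibility.

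I would first apply Theorem \ref{thm:norm3} to record that $N(f)\in F[x]$ has degree $dm$ in $x$, with leading coefficient $c=(-1)^{dr(n-1)}N_{E/F}(a_m)N_{E/C}(u)^{m}$ and constant term $N_{E/F}(a_0)$. On the other hand, the hypothesis $\deg(h)=dmn$ combined with $h(t)=\hat{h}(u^{-1}t^n)$ forces $\hat{h}\in F[x]$ to be monic of degree $dm$. Thus $N(f)$ and $\hat{h}$ both lie in $F[x]$ and share the same degree $dm$, so it suffices to prove that $\hat{h}$ divides $N(f)$ in $F[x]$: the quotient $N(f)/\hat{h}$ is then a nonzero constant in $F$, which must equal $c$ by comparing leading coefficients.

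The crucial step is to show that $f$ right-divides $N(f)$ in $R$. In the cyclic algebra $(D(x),\widetilde{\sigma},ux)$ of degree $dn$ over $F(x)$, the element $f$ satisfies its reduced characteristic polynomial $\chi_f(T)\in F(x)[T]$, whose constant term is $\pm N(f)$; rearranging $\chi_f(f)=0$ yields an expression of the form $N(f)=gf$ with $g\in D(t;\sigma)$. Performing left-Euclidean division of $N(f)$ by the monic polynomial $f$ within $R=D[t;\sigma]$ produces a quotient which, by uniqueness of division in the overlying division ring $D(t;\sigma)$, must coincide with $g$, so $g\in R$. Hence $N(f)\in Rf\cap F[u^{-1}t^n]$, and Lemma \ref{z in Rf iff z in RFII} (i) then forces $N(f)\in Rh$, i.e. $\hat{h}\mid N(f)$ in $F[x]$. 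Combined with the degree match, this yields $N(f)=c\hat{h}$. The second identity now follows at once by reading off the constant terms on both sides: $N_{E/F}(a_0)=c\cdot h_0=(-1)^{dr(n-1)}N_{E/F}(a_m)N_{E/C}(u)^{m}h_0$.

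The main technical obstacle is the transition from the identity $N(f)=gf$ in $D(t;\sigma)$ to the statement that $g$ actually lies in $R$; this rests on the left-Euclidean property of $D[t;\sigma]$ for monic divisors together with uniqueness of quotients in the division ring of central quotients. Everything else amounts either to citing prior results from \cite{PT} or to a direct comparison of coefficients guided by Theorem \ref{thm:norm3}.
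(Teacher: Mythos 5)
Your overall strategy coincides with the paper's: establish that $\hat{h}$ divides $N(f)$ in $F[x]$ (equivalently, that $h$ divides $N(f)$ in $R$), then use $\deg_x N(f)=dm=\deg\hat{h}$ together with Theorem \ref{thm:norm3} to identify the scalar factor from the leading coefficients and read off the constant terms. The paper reaches the divisibility by asserting that $N(f)$ is a two-sided multiple of $f$ in $R$ (a fact taken from \cite{PT}), so that the bound $f^*$ divides $N(f)$, and $f^*$ equals $h$ up to a scalar because $(f,t)_r=1$; you instead pass through $N(f)\in Rf\cap F[u^{-1}t^n]$ and Lemma \ref{z in Rf iff z in RFII}(i). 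These two routes are equivalent, and the coefficient comparison at the end is identical.

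The one step that does not hold up as written is your justification that the cofactor $g$ in $N(f)=gf$ lies in $R$. Cayley--Hamilton in $(D(x),\widetilde{\sigma},ux)$ gives $g$ as a polynomial in $f$ with coefficients among the $s_i\in F(x)$, so a priori $g\in D(t;\sigma)$ only. The appeal to ``uniqueness of quotients in the overlying division ring'' is vacuous there: in a division ring every nonzero element is a unit, so the factorization $N(f)=gf$ carries no rigidity, and comparing with the left Euclidean division $N(f)=qf+r$ in $R$ only yields $r=(g-q)f$ with $\deg(g-q)=\deg r-m$ lying in $[-m,0)$ when $r\neq 0$ --- elements of $D(t;\sigma)$ can have negative degree, so this is not a contradiction. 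To close the gap you should either invoke the fact that $R$ is a maximal order over the integrally closed domain $F[x]=C(R)$, so the reduced characteristic polynomial of $f\in R$ already has coefficients in $F[x]$ and hence $g\in R$ outright, or simply cite \cite{PT} for the statement that $f$ divides its reduced norm in $R$, which is what the paper's proof implicitly does. With that repair the rest of your argument is correct.
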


  \begin{proof}
 (iii) By Theorem \ref{thm:norm3} we have ${\rm deg}(N(f))=dmn$ in $R$. $N(f)$ is a two-sided multiple of $f$ in $R$, therefore the bound $f^*$ of $f$  divides $N(f)$ in $R$. Since  $(f,t)_r=1,$ $f^*\in C(R)$ and therefore $f^*$ equals $h$ up to some  factor in $F^\times$. Thus
  $h(t)=\hat{h}(u^{-1}t^n)$ must divide $N(f)$ in $R$. Write $N(f)=g (t)h(t)$ for some $g\in R$.
Comparing degrees in $R$ we obtain ${\rm deg} N(f)={\rm deg}(g(t))+dmn=dmn$, which implies ${\rm deg}(g)=0$, i.e. $g(t)=a\in A^\times$.
  This implies that $N(f)=ah(t)=a \hat{h}(u^{-1}t^n)$. Comparing highest coefficients of $N(f)$ and $a\hat{h}$ yields that
   $a=(-1)^{dr(n-1)}N_{E/F}(a_m)N_{E/C}(u)^{m}$ by Theorem \ref{thm:norm3}, so that comparing constant terms we get that $N_{E/F}(a_0)=(-1)^{dr(n-1)}$ $N_{E/F}(a_m)N_{E/C}(u)^{m}h_0$, if $h_0$ is the constant term of $\hat{h}(x)$.
  \end{proof}

\begin{theorem}\label{divisors in h when R=D[t,sigma]}
Let $f\in E[t;\sigma]\subset R$ be monic and irreducible of degree $m$. 
 Let  ${\rm deg}(\hat{h})=dm$ and suppose that all the monic polynomials similar to $f$ lie in $E[t;\sigma]$.
 If $g$ is a monic divisor of $h$ in $R$ of degree $lm$, then
 $$N_{E/F}(g_0)=N_{E/F}(a_0)^l.$$
\end{theorem}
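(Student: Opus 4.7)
The plan is to reduce the statement to the factorization of $h$ into irreducible factors in $E[t;\sigma]$, apply the explicit formula from Theorem~\ref{Sheekey Theorem 3 generalisation}(iii) to each factor, and use multiplicativity of the norm $N_{E/F}$ on $E$.

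First, I would unpack the hypothesis $\deg(\hat h)=dm$. By Theorem~\ref{thm:main2} this forces $s=1$, hence $k=dn$ and $\deg(h)=dmn$. Moreover $\hat h$ is irreducible in $F[x]$ (Theorem~\ref{Sheekey Theorem 3 generalisation}(ii), or simply because $\hat h$ must be irreducible when $f$ is irreducible), so by Lemma~\ref{Similar implies same mclm}(ii) every irreducible factor of $h$ in $R$ is similar to $f$. By hypothesis, every polynomial similar to $f$ lies in $E[t;\sigma]$, so every monic irreducible divisor of $h$ lies in $E[t;\sigma]$. Since $R$ is a principal ideal domain, a monic divisor $g$ of $h$ of degree $lm$ factors as $g=g_1\cdots g_l$, where each $g_i\in R$ is monic and irreducible of degree $m$ and similar to $f$; in particular, each $g_i\in E[t;\sigma]$.

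Next, since $\sigma(E)\subseteq E$, the subring $E[t;\sigma]$ is closed under multiplication and $g\in E[t;\sigma]$. Writing out the product $g_1g_2$ with $ta=\sigma(a)t$ (so $t^i b_j=\sigma^i(b_j)t^i$ in $E[t;\sigma]$) shows that the constant term behaves multiplicatively, i.e.\ $(g_1g_2)_0=(g_1)_0(g_2)_0$. Iterating,
\[ g_0 = (g_1)_0\,(g_2)_0\cdots (g_l)_0 \in E. \]

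Now I would apply Theorem~\ref{Sheekey Theorem 3 generalisation}(iii) to $f$ and to each $g_i$. For each $g_i$, Lemma~\ref{Similar implies same mclm}(i) gives $\mathrm{mclm}(g_i)=\mathrm{mclm}(f)=h$, hence $\deg(h)=dmn$ holds relative to $g_i$ as well; since $g_i$ is irreducible of degree $m>1$, $(g_i,t)_r=1$. Monicity kills the leading-coefficient factor, so the formula collapses to
\[ N_{E/F}(a_0) = (-1)^{dr(n-1)}N_{E/C}(u)^m\, h_0, \qquad N_{E/F}((g_i)_0) = (-1)^{dr(n-1)}N_{E/C}(u)^m\, h_0, \]
and therefore $N_{E/F}((g_i)_0)=N_{E/F}(a_0)$ for every $i$. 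Multiplicativity of $N_{E/F}$ on $E$ then yields
\[ N_{E/F}(g_0)=\prod_{i=1}^{l} N_{E/F}((g_i)_0) = N_{E/F}(a_0)^l, \]
which is the claim.

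The main obstacle is really a bookkeeping one: ensuring that the factorization $g=g_1\cdots g_l$ has every factor in $E[t;\sigma]$, so that we may invoke Theorem~\ref{Sheekey Theorem 3 generalisation}(iii) termwise and so that the constant-term formula $g_0=\prod (g_i)_0$ is valid in $E$. Once the hypothesis that all polynomials similar to $f$ lie in $E[t;\sigma]$ is combined with the similarity of all irreducible factors of $h$, the rest is a direct comparison of the explicit norm expressions and a multiplicativity argument.
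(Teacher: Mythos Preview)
Your proposal is correct and follows essentially the same route as the paper: factor $g$ into monic irreducibles similar to $f$, use Lemma~\ref{Similar implies same mclm} to see each factor has minimal central left multiple $h$, apply Theorem~\ref{Sheekey Theorem 3 generalisation}(iii) to each factor to get $N_{E/F}((g_i)_0)=N_{E/F}(a_0)$, and conclude by multiplicativity of the norm on constant terms. If anything you are slightly more careful than the paper in checking the side conditions (unpacking $\deg(\hat h)=dm$ to get $\deg(h)=dmn$, and verifying $(g_i,t)_r=1$).
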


\begin{proof}
We know that  $h(t)=\hat{h}(u^{-1}t^n)$, with $\hat{h}(x)$ irreducible in $F[x]$, since $f$ is irreducible. Thus $h$ is a t.s.m. element in Jacobson's terminology \cite{J96} and the irreducible factors $f_1(t),\dots, f_k(t)$ of any decomposition of $h(t)$ are all similar, and are all similar to $f$, as $f$ must be one of them by the definition of $h$.
Now $g(t)$ is a monic divisor of $h$.  Thus we can decompose $g(t)$ into a product of irreducible factors and up to similarity the irreducible factors of $g$ will be the same as suitably chosen irreducible factors of $h$ by \cite[Theorem 1.2.9.]{J96}. Hence w.l.o.g. $g=f_1 f_2\cdots f_l$, where the $f_i$ are irreducible in $R$ and $f_i$ is similar to $f$ for all $i=1,2,\dots,l$ \cite[Theorem 1.2.19]{J96}. Thus by Lemma \ref{Similar implies same mclm}, the minimal central left multiple of each $f_i$ is equal to $h$.
Since $f$ is monic, we may assume w.l.o.g. that all $f_i$ are monic.
By Theorem \ref{Sheekey Theorem 3 generalisation} and since all $f_i\in E[t;\sigma]$ by our assumption, this implies that $N_{E/F}(f_i(0))=(-1)^{dm(n-1)}N_{E/C}(u)^{m}h_0=N_{E/F}(a_0)$. As the constant term of $g$ is equal to $\prod_{i=1}^l f_i(0)$, we see that
$$N_{E/F}(g_0)=\prod_{i=1}^l N_{E/F}(f_i(0))=[ (-1)^{dm(n-1)}N_{E/C}(u)^{m}h_0]^l$$
$$=(-1)^{l dm(n-1)}N_{E/C}(u)^{lm}h_0^l=N_{E/F}(a_0)^l.$$
\end{proof}

 We are not able to say if the assumptions on the $f_i$'s in the above result is empty or trivial.

\subsection{The algebra $(K(x)/F(x),\widetilde{\sigma},x)$}


Let $K/F$ be a cyclic field extension of degree $n$ with ${\rm Gal}(K/F)=\langle \sigma\rangle$,
 $R=K[t;\sigma]$ and $x=t^n$. We now look at  the cyclic algebra $(K(x)/F(x),\widetilde{\sigma},x)$ (this case corresponds to $D=C$ in the previous Section).
Let $N$ be the reduced norm of $(K(x)/F(x),\widetilde{\sigma},x)$ over $F(x)$ (cf. also \cite[Proposition 1.4.6]{J96}). We have $\widetilde{\sigma}|_K=\sigma$, and $N$ is a nondegenerate form of degree $n$.
Let $f=\sum_{i=0}^ma_it^i\in R$ be a polynomial of degree $m$ such that $a_0\not=0$
and $h\in R$ be the minimal central left multiple of $f$ in $R$. Then $N(f(t))=N_{K/F}(a_0)+\dots + (-1)^{m(n-1)}N_{K/F}(a_m)x^m$ \cite[Theorem 3]{PT}.

  \begin{theorem} \label{thm:norm4}
 Suppose that ${\rm deg}(h)=mn$.
 \\ (i) \cite[Theorem 6 (i)]{PT} If $\hat{h}$ is irreducible in $F[x]$ then $f$ is irreducible in $R$.
 \\ (ii)  \cite[Theorem 6 (ii)]{PT} If $f$ is irreducible then $N(f)$ is irreducible in $F[x]$.
 \\ (iii) $N_{K/F}(a_0)=(-1)^{m(n-1)}h_0,$ if $h_0$ denotes the constant term of $\hat{h}$.
 \end{theorem}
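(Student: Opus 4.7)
The plan is to treat parts (i) and (ii) as citations from \cite{PT} and focus on part (iii), which is an unreferenced computation and runs exactly parallel to Theorem \ref{Sheekey Theorem 3 generalisation}(iii) (the $D$-version), just with $D=C=F$ replaced properly so the algebra simplifies to the cyclic $(K(x),\widetilde{\sigma},x)$. The setting is cleaner here: the reduced norm is given in closed form by $N(f)=N_{K/F}(a_0)+\dots+(-1)^{m(n-1)}N_{K/F}(a_m)x^m$ where $x=t^n$, so as a polynomial in $t$ it has degree $mn$ and (taking $f$ monic, so $a_m=1$) leading coefficient $(-1)^{m(n-1)}$.

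First I would observe that $N(f)$ is a two-sided element of $R$ lying in $Rf$: it is the product of the $\widetilde{\sigma}$-conjugates of $f$ in $D(t;\sigma)=(K(x),\widetilde{\sigma},x)$, which, together with the fact that $N(f)\in F[x]=C(R)$, forces $N(f)\in Rf\cap C(R)$. Therefore the bound $f^{*}$ of $f$ divides $N(f)$ in $R$. Since $(f,t)_r=1$, we have $f^{*}\in C(R)$ by \cite[Lemma 2.11]{GLN18}, so $f^{*}$ equals $h$ up to a scalar in $F^\times$. Consequently $h\mid N(f)$ in $R$.

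Next I would write $N(f)=q(t)\,h(t)$ with $q\in R$. Since $\deg N(f)=mn$ (in $t$) and $\deg h=mn$ by hypothesis, comparing $t$-degrees gives $\deg q=0$, so $q=c$ is a constant; because $N(f)\in F[x]$ and $h\in F[x]$, in fact $c\in F^\times$. Now $h(t)=\hat{h}(t^n)$ with $\hat{h}$ monic of degree $m$, so the leading coefficient of $h$ in $t$ is $1$; matching this with the leading coefficient $(-1)^{m(n-1)}$ of $N(f)$ in $t$ yields $c=(-1)^{m(n-1)}$. Finally, comparing constant terms on both sides of $N(f)=(-1)^{m(n-1)}h$ gives $N_{K/F}(a_0)=(-1)^{m(n-1)}h_0$, which is exactly the claim.

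The only place I can see a real obstruction is the tacit monicity of $f$: the stated formula in (iii) has no $N_{K/F}(a_m)$ factor, so I would either note explicitly that $f$ is taken monic (consistent with the rest of the paper's convention on minimal central left multiples) or else write the general version $N_{K/F}(a_0)=(-1)^{m(n-1)}N_{K/F}(a_m)\,h_0$ and specialize. Beyond that, the proof is a one-line degree-and-leading-coefficient comparison, and no subtle bounding or irreducibility argument is required.
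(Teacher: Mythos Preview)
Your proposal is correct and follows essentially the same route as the paper: the paper simply says that (iii) is proved ``analogously as Theorem \ref{Sheekey Theorem 3 generalisation} (iii)'', and your argument is precisely that specialization---show $h\mid N(f)$ via the bound, compare degrees to force the quotient to be a scalar, then match leading and constant coefficients. Your remark about the implicit monicity of $f$ (so that $N_{K/F}(a_m)=1$) is a worthwhile clarification the paper leaves unsaid.
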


  Theorem \ref{thm:norm4} (iii) is proved analogously as Theorem \ref{Sheekey Theorem 3 generalisation} (iii).

\begin{theorem}\label{thm: divisors degree lm} (cf. \cite[Theorem 5]{Sheekey}\label{Sheekey Theorem 5} for finite fields, the proof is the same)
Suppose that $f$ is not right-invariant.
If ${\rm deg}(h)=mn$ and $g$ is a monic divisor of $h(t)$ in $R$ of degree $ml$, then
$$N_{K/F}(g_0)=N_{K/F}(a_0)^l.$$
\end{theorem}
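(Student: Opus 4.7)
The plan is to mirror the proof of Theorem~\ref{divisors in h when R=D[t,sigma]}, replacing the cyclic division algebra $D$ by the field $K$ and using Theorem~\ref{thm:norm4}(iii) in place of Theorem~\ref{Sheekey Theorem 3 generalisation}(iii). The commutative setting simplifies matters, since we need not worry about whether similar irreducible factors remain in a distinguished subring (as was required in the $D$-case via the hypothesis that all polynomials similar to $f$ lie in $E[t;\sigma]$).

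First I would use the hypothesis $\deg(h) = mn$, combined with the (implicit) irreducibility of $f$ as in the companion theorem, to conclude that $\hat{h}$ is irreducible in $F[x]$. Consequently $h$ is a two-sided maximal element of $R$, and by \cite[Theorems 1.2.9 and 1.2.19]{J96} all irreducible factors of any factorization of $h$ in $R$ are mutually similar; since $f$ itself appears among them by definition of $h$, every irreducible factor of $h$ is similar to $f$.

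Next I would factor the monic divisor $g = f_1 f_2 \cdots f_l$ into monic irreducible factors in $R$, possible since $R$ is a principal ideal domain and $g$ is monic. By the previous step each $f_i$ is similar to $f$, so Lemma~\ref{Similar implies same mclm}(i) gives that the minimal central left multiple of each $f_i$ equals $h$. Applying Theorem~\ref{thm:norm4}(iii) to each monic $f_i$ and to $f$ itself yields
$$N_{K/F}(f_i(0)) = (-1)^{m(n-1)} h_0 = N_{K/F}(a_0)$$
for every $i = 1, \ldots, l$. Since the constant term of a product in $R = K[t;\sigma]$ is the product of the constant terms, $g_0 = f_1(0) \cdots f_l(0)$, and the multiplicativity of $N_{K/F}$ then immediately yields $N_{K/F}(g_0) = N_{K/F}(a_0)^l$. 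The only delicate point is the irreducibility of $\hat{h}$, which underwrites the conclusion that every irreducible factor of $h$ is similar to $f$; once that is in place, the argument is essentially a transcription of the $D$-case, and the commutativity of $K$ removes all auxiliary hypotheses needed there.
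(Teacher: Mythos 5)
Your proposal is correct and follows essentially the same route as the paper: both factor the monic divisor $g$ into $l$ monic irreducible factors, each similar to $f$ via the two-sided maximality of $h$ and \cite[Theorems 1.2.9, 1.2.19]{J96}, then apply Lemma~\ref{Similar implies same mclm} and Theorem~\ref{thm:norm4}(iii) to each factor and multiply the norms of the constant terms. The only difference is that you spell out a few steps (irreducibility of $\hat{h}$, the refinement of the factorization of $g$ to one of $h$) that the paper compresses by referring to the proof of Theorem~\ref{divisors in h when R=D[t,sigma]}.
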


%
%

\section{Division algebras and MRD codes employing $f\in R$}\label{sec:division}

\subsection{The case that $f\in D[t;\sigma]$}

Let $f\in R=D[t;\sigma]$ be a monic polynomial of degree $m$.
Let $\rho\in {\rm Aut }(D)$, $\nu\in D$ and $F'={\rm Fix}(\rho)\cap F$ where $F=C\cap {\rm Fix}(\sigma)$.
Let $b(t), c(t)\in R_m=\{g\in R\,|\, {\rm deg}(g)<m\}$ and $b_0$ be the constant term of $b(t)$. Then  the multiplication defined via
$$b(t)\circ c(t)=(b(t)+\nu\rho(b_0)t^m ) c(t)\,\,{\rm mod}_r f$$
makes $R_m$ into a non-unital nonassociative ring $(R_m,\circ)$.
When the context is clear, we will drop the $\circ$ notation and simply use juxtaposition.
 $(R_m,\circ)$ is an algebra over $F'$.

\begin{example}
 If $f(t)=t-c\in D[t;\sigma]$ for some $c\in D$, $\nu\neq 0$, then
  $(R_m,\circ)$ has the multiplication
  \begin{align*}
a\circ b =& (a+\nu\rho(a)t)b)\: \,\,{\rm mod}_r f\\
=& ab+\nu\rho(a)\sigma(b)t\: \,\,{\rm mod}_r f\\
=& ab+\nu\rho(a)\sigma(b)c
\end{align*}
for all $a,b\in D$.  This generalizes the algebras studied in \cite{P15}.
If $R=K[t;\sigma]$ for some finite field extension $K/F$, this is the multiplication of Albert's twisted semifields \cite{A}. If $F/F'$ is finite and $(R_m,\circ)$ is not a division algebra,
 $a\circ b=0$ for some non-zero $a,b\in D$, if and only if $ ab=-\nu\rho(a)\sigma(b)c$. Taking norms of both sides and cancelling $N_{D/F'}(ab)$ on both sides, we obtain that $N_{D/F'}(-\nu c)=(-1)^{d^2n[F:F']} N_{D/F'}(\nu c)=1.$
 Thus  if $F/F'$ is finite and $N_{D/F'}(\nu c)\neq (-1)^{d^2n[F:F']}$ then $(R_m,\circ)$ is a division algebra.
\end{example}

From now on for the rest of the paper, we  again assume that  $f$ is an irreducible monic polynomial of degree $m>1$,
 $(f,t)_r= 1$, and that $h$ is the minimal central left multiple of $f$. Let $F/F'$ be finite-dimensional,
and
$$P=\lbrace d_0+d_1t+\dots+d_{lm-1}t^{lm-1}+\nu\rho(d_0)t^{lm}\,|\, d_i\in D\rbrace\subset D[t;\sigma].$$

\begin{theorem}  \label{thm:division3}
Let $l=1$. Then: \\
 (i)  Let $b(t)\in R_m$ with constant coefficient $b_0$.  If $b(t)+\nu \rho (b_0)t^m\in P$ is reducible in $R$, then $b(t)$ is not a left zero divisor in $(R_m, \circ)$.
\\ (ii) If $\nu=0$ then $(R_m, \circ)$ is a division algebra over $F'$, which for $m\geq 2$ is a  Petit algebra.
\\ (iii) If $P$ does not contain any polynomial similar to $f$,
then
$(R_m, \circ)$ is a division algebra over $F'$.
\end{theorem}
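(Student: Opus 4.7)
The plan is to isolate a single annihilator lemma and read off all three parts from it. I would first prove the following key fact, which is standard noncommutative PID theory in the spirit of \cite{J96}: for any nonzero $c\in R_m$, the set $\mathrm{Ann}_R(c+Rf)=\{x\in R:\,xc\in Rf\}$ is a principal maximal left ideal $Rg$, where $g$ is irreducible, similar to $f$, and of degree $m$. Indeed, because $f$ is irreducible, $Rf$ is a maximal left ideal and $R/Rf$ is a simple left $R$-module; since $\deg c<m$ and $c\neq 0$, the coset $c+Rf$ is nonzero, so the left $R$-linear map $\varphi:R\to R/Rf$, $x\mapsto xc+Rf$, is surjective by simplicity. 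Its kernel $Rg$ is therefore a maximal left ideal, giving $R/Rg\cong R/Rf$ and hence $g\sim f$ (of degree $m$).

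Granted the lemma, the uniform observation is: if $a\in A$ satisfies $ac\in Rf$ for some nonzero $c\in R_m$, then $a\in Rg$, so $a=qg$ for some $q\in R$; since $\deg a\leq m=\deg g$ we must have $q\in D$, and if $a\neq 0$ then $q\in D^{\times}$, so $a$ is similar to $g$ and hence to $f$. Part (i) is now immediate: if $a\in A$ were reducible and also a left zero divisor, the implication above would force $a\sim f$, making $a$ irreducible --- a contradiction. For part (iii), the hypothesis rules out $a\sim f$ for every nonzero $a\in A$, so the left multiplication $L_b:c\mapsto b\circ c$ on $R_m$ has trivial kernel for every nonzero $b$. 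A symmetric argument handles right multiplication $R_c$: the map $b\mapsto a=b+\nu\rho(b_0)t^m$ is $F'$-linear because $\rho$ fixes $F'$, so $R_c$ is $F'$-linear, and the same annihilator analysis forces $R_c(b)=0$ to imply $a=0$, i.e.\ $b=0$. Finite-dimensionality over $F'$ promotes these injectivities to bijectivities, yielding a division algebra. Part (ii) is the specialization $\nu=0$: then $A=R_m$ contains only polynomials of degree less than $m$, none of which can be similar to $f$, so (iii) applies; and the multiplication collapses to the Petit multiplication $bc\,{\rm mod}_r\,f$, which for $m\geq 2$ is by definition the Petit algebra $S_f$.

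The main obstacle I anticipate is the annihilator lemma itself: one must carefully use the correspondence between similarity classes of irreducible polynomials in $R$ and isomorphism classes of their cyclic quotient modules in order to conclude $g\sim f$ from $R/Rg\cong R/Rf$. Everything else --- the degree bound $\deg a\leq m=\deg g$ reducing $q$ to an element of $D$, the $F'$-linearity of $b\mapsto a$, and the promotion of an injective linear map on a finite-dimensional $F'$-vector space to a bijection --- is routine bookkeeping once the lemma is in place.
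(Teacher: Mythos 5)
Your proof is correct. The overall strategy coincides with the paper's: both arguments reduce all three parts to the single claim that a left zero divisor of $(R_m,\circ)$ must correspond to an element $a=b+\nu\rho(b_0)t^m\in A$ that is irreducible of degree $m$ and similar to $f$. Where you differ is in how that claim is obtained. The paper starts from the equation $ac=gf$ and invokes the uniqueness, up to similarity, of factorizations into irreducibles in the left principal ideal domain $R$: since $\deg(c)<m$, no irreducible factor of $c$ can be similar to $f$, so $f$ must be similar to an irreducible factor of $a$, and the bound $\deg(a)\le m$ forces $a$ itself to be that factor. You instead prove an annihilator lemma: for nonzero $c\in R_m$ the set $\{x\in R\,|\, xc\in Rf\}$ is a maximal left ideal $Rg$ with $R/Rg\cong R/Rf$, hence $g\sim f$ of degree $m$; then $a\in Rg$ together with $\deg(a)\le m$ makes $a$ a unit multiple of $g$. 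The two mechanisms are equivalent pieces of Jacobson's theory, but yours needs only the simplicity of $R/Rf$ and the left PID property rather than the full Jordan--H\"older uniqueness of factorization, and it has the additional merit of spelling out what the paper leaves implicit: the $F'$-linearity of the map $b\mapsto b+\nu\rho(b_0)t^m$, the injectivity of right multiplication as well as left, and the passage from injectivity to bijectivity via finite-dimensionality over $F'$.
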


Note that $f$ may be right-invariant.

\begin{proof}
Suppose that there are $b(t)=b_0+b_1t+\dots+b_{m-1}t^{m-1}, c(t)\in R_m$, such that
$$b(t)\circ c(t)=(b(t)+\nu \rho (b_0)t^m) c(t){\rm mod}_r f =0.$$
Then there exists $g\in R_m$ such that $(b(t)+\nu \rho (b_0)t^m) c(t)=g(t)f(t).$
Since $f$ is irreducible and of degree $m$, while $deg(c) < m$, $f$ must be similar to an irreducible factor of $b(t)+\nu \rho (b_0)t^m$, because of the uniqueness of an irreducible decomposition in $R$ up to similarity.
But $b(t)+\nu \rho (b_0)t^m$ has degree at most $m$, so $f$     is similar to $b(t)+\nu \rho (b_0)t^m$. Thus $b(t)+\nu \rho (b_0)t^m$ must have degree $m$ and be irreducible as well.
Hence if $b(t)+\nu \rho (b_0)t^m$ is not similar to $f$ then $b(t)+\nu \rho (b_0)t^m$ is not a left zero divisor in $(R_m, \circ)$. This happens for instance, if $\nu=0$ or if $b(t)+\nu \rho (b_0)t^m$ is reducible.
Moreover, $(R_m, \circ)$ is a division algebra if $P$ does not contain any polynomial similar to $f$.
\end{proof}

We are again not able to say if the assumptions on the $f_i$'s in the following result is empty or trivial.

\begin{theorem}  \label{thm:division2}
Let $D=(E/C,\gamma,a)$ be a cyclic division algebra over $C$ of degree $d$ such that
$\sigma|_E\in {\rm Aut}(E)$ and $\gamma\circ\sigma|_E=\sigma|_E\circ\gamma$.
Suppose that
 $\sigma^n(z)=u^{-1}zu$ with $u\in E$.

Let $f(t)=\sum_{i=0}^ma_it^i\in E[t;\sigma]\subset D[t;\sigma]$ be monic and irreducible, 
and
let ${\rm deg}(h)=dmn$. Suppose that all monic $f_i$ similar to $f$  lie in $E[t;\sigma]$. Then
$(R_m, \circ)$ is a division algebra over $F'$, if one of the following holds:
\\ (i)  $\nu\not\in E$ and $\rho|_E\in {\rm Aut}(E)$.
\\ (ii) $\nu\in E^\times$  and $\rho|_E\in {\rm Aut}(E)$,
such that
$$N_{E/F'}(a_0) N_{E/F'}(\nu)\neq 1.$$
\end{theorem}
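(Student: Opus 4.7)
The plan is to reduce to Theorem \ref{thm:division3}(iii): it suffices to show that no element of $A$ is similar to $f$. I would argue by contradiction. Suppose some
$$a = d_0 + d_1 t + \cdots + d_{m-1} t^{m-1} + \nu\rho(d_0)\,t^m \in A$$
is similar to $f$. Since similar polynomials define cyclic modules $R/R(\cdot)$ of equal left $D$-dimension, $a$ has degree exactly $m$; this forces $\nu\rho(d_0)\neq 0$, and since $\nu\neq 0$ in both cases, also $d_0\neq 0$. By the standing hypothesis that every polynomial in $R$ similar to $f$ lies in $E[t;\sigma]$, we have $d_0,\ldots,d_{m-1}\in E$ and $\nu\rho(d_0)\in E$.

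In case (i), where $\nu\notin E$, we have $d_0\in E^\times$ and $\rho(d_0)\in E^\times$ (since $\rho|_E\in \mathrm{Aut}(E)$), so $\nu = (\nu\rho(d_0))\rho(d_0)^{-1}\in E$, a contradiction. Thus (i) is immediate.

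In case (ii), where $\nu\in E^\times$, I would normalize by setting $g := (\nu\rho(d_0))^{-1}a \in E[t;\sigma]$. Then $g$ is monic of degree $m$ with $Rg = Ra$, hence still similar to $f$. By Lemma \ref{Similar implies same mclm}, $\mathrm{mclm}(g) = h$, so $g$ is a right divisor of $h$, and by Lemma \ref{right divisors are left divisors} $g$ divides $h$ on both sides. Applying Theorem \ref{divisors in h when R=D[t,sigma]} with $l=1$ then gives $N_{E/F}(g_0) = N_{E/F}(a_0)$, where $g_0 = (\nu\rho(d_0))^{-1}d_0$ and $a_0$ denotes the constant term of $f$. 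Applying $N_{F/F'}$ to both sides and using the tower formula $N_{E/F'} = N_{F/F'}\circ N_{E/F}$ (valid for the tower $F'\subseteq F\subseteq E$ of finite extensions), I obtain $N_{E/F'}(g_0) = N_{E/F'}(a_0)$. Since $\rho$ fixes $F'$ by definition, $\rho|_E$ is an $F'$-linear automorphism of $E$, so $N_{E/F'}(\rho(d_0)) = N_{E/F'}(d_0)$. Multiplicativity of the norm then yields
$$N_{E/F'}(\nu)^{-1} = N_{E/F'}(a_0),$$
i.e.\ $N_{E/F'}(a_0)\,N_{E/F'}(\nu) = 1$, contradicting hypothesis (ii).

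The main obstacle I anticipate is the passage from the $N_{E/F}$-identity furnished by Theorem \ref{divisors in h when R=D[t,sigma]} to the $N_{E/F'}$-condition in the theorem's statement: Theorem \ref{divisors in h when R=D[t,sigma]} only sees $N_{E/F}$, whereas the hypothesis is phrased in terms of the (in general larger) tower $E/F'$. The tower formula for norms in finite field extensions handles this cleanly once one observes that $\rho|_E$ acts as an $F'$-linear automorphism; combined with the module-theoretic preservation of degree under similarity (which forces $\nu\rho(d_0)\neq 0$ and hence permits the normalization $g = (\nu\rho(d_0))^{-1}a$), the argument then closes in both cases.
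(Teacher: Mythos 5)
Your proposal is correct and follows essentially the same route as the paper: reduce via Theorem \ref{thm:division3}(iii) to excluding polynomials similar to $f$ from $A$, note that any such polynomial is a monic-up-to-scalar divisor of $h$ of degree $m$ with coefficients in $E$, dispose of case (i) because the leading coefficient $\nu\rho(d_0)$ would force $\nu\in E$, and in case (ii) derive $N_{E/F'}(a_0)N_{E/F'}(\nu)=1$ from the norm identity for divisors of $h$ together with $N_{E/F'}(\rho(d_0))=N_{E/F'}(d_0)$. The only cosmetic difference is that you invoke Theorem \ref{divisors in h when R=D[t,sigma]} with $l=1$ where the paper applies Theorem \ref{Sheekey Theorem 3 generalisation} directly; these yield the same identity.
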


Note that our global assumption that $\sigma^n(z)=u^{-1}zu$ for all $z\in D$, so that $\sigma^n(e)=u^{-1}eu=e$ for all $e\in E$,  forces $(\sigma|_E)^n=id$.

\begin{proof}
By Theorem \ref{thm:division3},
$(R_m, \circ)$ is a division algebra, if the set $P$ with $l=1$ does not contain any polynomial similar to $f$. All polynomials similar to $f$ are irreducible factors of $h(t)$, so $(R_m, \circ)$ is a division algebra, if $P$ does not contain any irreducible factor of $h(t)$. Suppose that $P$ contains an irreducible factor $g$ of $h$ with constant term $g_0$. Then $g$ has degree $m$ as it is similar to $f$. Let  $g_{m} t^{m}$ be its highest coefficient, so that $g_{m}^{-1}g$ is a monic divisor of $h$.

By Theorem \ref{Sheekey Theorem 3 generalisation} and since $g\in E[t;\sigma]$  by assumption, this implies
$$N_{E/F}(g_0g_m^{-1})=(-1)^{m(n-1)}h_0=N_{E/F}(a_0)$$
and in particular, that $g_0$ and $g_m$ are both non-zero. Since $g\in P$,
we also have $g_{m}=\nu\rho(g_0)$.
\\ Suppose $\nu\not\in E$ and $\rho(E)\subset E$. Since the coefficients of the $f_i$ all lie in $E$ we have
$g_{m}\not=\nu\rho(g_0)$ which yields a contradiction. Hence there is no divisor $g$ of $h$ in $P$ and $S$ is a division algebra.
\\
Suppose that $\nu\in E^\times$ and $\rho(E)\subset E$.
Substituting $g_{m}=\nu\rho(g_0)$  into the above equation yields
$$N_{E/F}(g_0)=N_{E/F}(a_0) N_{E/F}(\nu\rho(g_0)).$$
Applying $N_{F/F'}$ to both sides implies that
$$N_{E/F'}(g_0)=N_{F/F'}(N_{E/F}(a_0)) N_{E/F'}(\nu\rho(g_0)).$$
Now $N_{E/F'}(\rho(g_0))=N_{E/F'}(g_0)$, so we can cancel the non-zero term $N_{E/F'}(g_0)$ to obtain
$1=N_{E/F'}(a_0) N_{E/F'}(\nu).$
\end{proof}

\begin{remark}\label{remark}
 Let $S=S_{n,m,1}(\nu,\rho, f)=\lbrace a+Rh\,|\, a\in P\rbrace$.
 We can use $\mathcal{C}(S)\subset M_k(B)$ to define a multiplication on $B^m$. As ${\rm dim}_F(D)=d^2n$ and ${\rm dim}_F(B)=d^2mn/k$, there exists an $F$-vector space isomorphism between $D^m$ and $B^k$. Similarly, there exists an isomorphism  $G:V_f\to B^k$, $G(a+Rf)=\underline{a}$.
Define $\ast:B^k\times B^k\to B^k$ by
$$\underline{a}\ast\underline{b}=M_a\cdot \underline{b}$$
for all $\underline{a},\underline{b}\in B^k$, where $M_a\in \mathcal{C}(S)$ is the representation of the map $L_{a(t)+\nu\rho(a_0)t^m}\in {\rm End}_B(R/Rf)$ induced by $G$.
(Each $a\in R_m$ corresponds to a map $L_{a(t)+\nu\rho(a_0)t^m}$. As ${\rm End}_B(R/Rf)\cong M_k(B)$ and ${\rm dim}(R_m)={\rm dim}(\mathcal{C}(S))$, there is a canonical bijection between $L_{a(t)+\nu\rho(a_0)t^m}$ and $M_a$.)
As $M_a$ represents $L_a\in\text{End}_B(R/Rf)$, $(B^k,\ast)$ is  isomorphic to $R/Rf$ equipped with the multiplication $(a+Rf)(b+Rf)=L_{a(t)+\nu\rho(a_0)t^m}(b+Rf)$. Thus $(R_m,\circ)$ and $(B^k,\ast)$ are
 isomorphic algebras and
 $S_{n,m,1}(\nu,\rho, f)$ is the same algebra as $(R_m, \circ)$.
\end{remark}

If $l=1$ we write $S(\nu,\rho, f)=S_{n,m,1}(\nu,\rho, f)$ for $(R_m,\circ)$.
$S(\nu,\rho, f)$ is a division algebra if and only if every matrix in $\mathcal{C}_{n,m,1}$ has full column rank.
It then canonically defines an $F'$-linear $MRD$ code in $M_k(B)$, $B={\rm Nuc}_r(S_f)$. Therefore we obtain from all of the above results:

\begin{corollary}\label{cor:18}
Let $D=(E/C,\gamma,a)$ be a cyclic division algebra over $C$ of degree $d$ such that
$\sigma|_E\in {\rm Aut}(E)$ and $\gamma\circ\sigma=\sigma\circ\gamma$.
Suppose that
 $\sigma^n(z)=u^{-1}zu$ with $u\in E$.

Let $f=\sum_{i=0}^ma_it^i\in  R$ be  monic and irreducible of degree $m$. Then $B$ is a division algebra over $E_{\hat{h}}$ and $S(\nu,\rho, f)$
defines an $F'$-linear  MRD-code in $M_k(B)$ with minimum distance $k$, if one of the following holds:
 \\ (i)  $\nu=0$. Then $S(\nu,\rho, f)$ is a (unital) Petit algebra.
 \\ (ii) $P$ does not contain any polynomial similar to $f$.
 \\ (iii) Suppose $\rho|_E\in {\rm Aut}(E)$, $f=\sum_{i=0}^ma_it^i\in E[t;\sigma]\subset R$, ${\rm deg}(h)=dmn$,  
 all the monic polynomials similar to $f$ lie in $E[t;\sigma]$, and one of the following holds:
 \\ (a) $\nu\not\in E$,
  \\ (b) $N_{E/F'}(\nu)N_{E/F'}(a_0)\neq 1.$
\\
Then 
we get an $F'$-linear MRD-code in $M_{dn}(E_{\hat{h}})$ with minimum distance $dn$.
  \end{corollary}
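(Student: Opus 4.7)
The plan is to assemble the corollary from three ingredients already established in the paper: the division algebra criteria of Theorems \ref{thm:division3} and \ref{thm:division2}; the remark preceding the corollary identifying $S_{n,m,1}(\nu,\rho,f)$ with $(R_m,\circ)$; and the standard observation that in a finite-dimensional division algebra, left multiplication by a nonzero element is bijective and hence represented by an invertible matrix.

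First I would verify that $S = S_{n,m,1}(\nu,\rho,f)$ is a division algebra in each of the three cases. Using the identification $S \cong (R_m,\circ)$ from the remark, case (i) with $\nu=0$ reduces the multiplication to the usual Petit product $bc\,{\rm mod}_r\,f$, so $(R_m,\circ) = S_f$ is a unital Petit division algebra by Theorem \ref{thm:division3}(ii). Case (ii) is exactly Theorem \ref{thm:division3}(iii). For case (iii), the listed hypotheses on $D$, $f$, $\rho$, $\nu$ and on $\deg(h)$ match precisely those of Theorem \ref{thm:division2}: subcase (a) $\nu\notin E$ is its part (i), and subcase (b) $\nu\in E^\times$ with $N_{E/F'}(\nu)N_{E/F'}(a_0)\neq 1$ is its part (ii).

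Second, having established that $S$ is a division algebra, I would pass to the code $\mathcal{C}(S)$ and show that every nonzero matrix in it has full column rank $k$. For $0 \ne a+Rh \in S$, the endomorphism $L_a \in \mathrm{End}_B(V_f)$ is injective: its kernel corresponds to right annihilators of $a+Rh$ in $S$, which vanish since $S$ has no zero divisors. Because $V_f \cong B^k$ is a free right $B$-module of finite rank by Lemma \ref{prop: Vf is a B-module2}, an injective $B$-linear endomorphism is bijective, so $M_a$ is invertible over $B$ and $\mathrm{colrank}(M_a) = k$. By $F'$-linearity of $\mathcal{C}(S)$, any two distinct matrices $M_a, M_b$ differ by a nonzero $M_{a-b}$ of column rank $k$, yielding $d_{\mathcal{C}} = k$. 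Since $l = 1$, this equals the generalized Singleton-like bound value $k - l + 1$, so $\mathcal{C}(S)$ is a generalized $F'$-linear MRD code in $M_k(B)$. For the concluding sentence of the corollary, I specialize to the hypotheses of (iii), in particular $\deg(h) = dmn$: Theorem \ref{thm:main2} gives $\deg(h) = dnm/s$ and $k = dn/s$, so the degree constraint forces $s=1$, whence $B = E_{\hat{h}}$ is a field and $k = dn$, placing the MRD code in $M_{dn}(E_{\hat{h}})$ with minimum distance $dn$.

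I do not anticipate a genuine obstacle; the substantive work was done in the earlier theorems on division algebras and on the structure of $R/Rh$, so the corollary is mostly assembly and substitution. The only delicate point is the injectivity-implies-bijectivity step in stage two, and this relies only on $V_f$ being a free $B$-module of finite rank, which is precisely the content of Lemma \ref{prop: Vf is a B-module2}.
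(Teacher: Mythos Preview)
Your proposal is correct and follows essentially the same route as the paper, which simply states ``Therefore we obtain from all of the above results'' and leaves the assembly implicit. You have correctly identified the three ingredients: Theorems~\ref{thm:division3} and~\ref{thm:division2} for the division algebra criteria, the Remark for the identification $S\cong(R_m,\circ)$, and the observation (stated just before the corollary) that $S$ being a division algebra forces every $M_a$ to have full column rank, hence $d_{\mathcal{C}}=k=k-l+1$; your explicit injectivity-on-a-finite-rank-free-module argument merely fleshes out what the paper asserts in one line.
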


The case $\nu=0$ produces the MRD codes which are associated with the unital Petit algebras. They can be viewed as generalized Gabidulin codes.

More generally, we can also construct MRD-codes for $l>1$.
Let $f\in R$ not be right-invariant, and let $l<k$ be a positive integer.

\begin{theorem} \label{thm:MRD}
Suppose that $P$ does not contain any polynomial of degree $lm$, whose irreducible factors are all similar to $f$. Then the set
$S_{n,m,l}(\nu,\rho, f)$
defines an $F'$-linear MRD-code
in $M_k(B)$ with minimum distance $k-l+1$. In particular, if ${\rm deg}(h)=dmn$,
then this code is an $F'$-linear  MRD-code in $M_{dn}(E_{\hat{h}})$ with minimum distance $dn-l+1$.
\end{theorem}

We are not able to say if the assumption on $P$ can be satisfied in this general setup. It is satisfied in the case considered in
\cite[Theorem 7]{Sheekey}.

\begin{proof}
We have to show that the minimum column rank of the matrix corresponding to a nonzero element in $S_{n,m,l}(\nu,\rho, f)$ is $k-l+1$. By Theorem \ref{rank of a polynomial general}, this is equivalent to finding an element $g\in A$ such that the greatest common right divisor of $g$ and $h$ has degree at most $(l-1)m$. Suppose towards a contradiction that there exists $g\in A$ such that ${\rm deg}({\rm gcrd}(g,h))=lm$; since ${\rm deg}(g)\leq lm$, it follows that $g$ must be a divisor of $h$.
As any divisor of $h$ is a product of irreducible polynomials similar to $f$,  $g$ must be a product of polynomials similar to $f$. This contradicts our assumption, so any matrix has rank at least $k-l+1$.
\end{proof}

\begin{theorem}(for  $f\in K[t;\sigma]$, $K$ a finite field, this is \cite[Theorem 7]{Sheekey}) 
Let  $f=\sum_{i=0}^ma_it^i \in E[t;\sigma]\subset R=D[t;\sigma]$ be monic irreducible, and
let ${\rm deg}(h)=dmn$. Suppose that all monic $f_i$ similar to $f$  lie in $E[t;\sigma]$.
Then $S_{n,m,l}(\nu,\rho, f)$
defines an $F'$-linear MRD code in $M_{dn}(E_{\hat{h}})$ with minimum distance  $dn-l+1$, 
if one of the following holds:
\\ (i) $\nu=0$
\\ (ii)  $\nu\not \in E$ and $\rho|_E\in {\rm Aut}(E)$.
\\ (iii) $\nu \in E$, $\rho|_E\in {\rm Aut}(E)$ and
$N_{E/F'}(\nu)N_{E/F'}(a_0)^l\neq 1.$
\end{theorem}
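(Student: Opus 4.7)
The plan is to apply Theorem \ref{thm:MRD}: it suffices to show that $A$ contains no polynomial of degree $lm$ whose irreducible factors are all similar to $f$, since then the code has minimum distance $k-l+1=dn-l+1$ (using $deg(h)=dmn$, so $k=dn$ and $B=E_{\hat{h}}$).

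Case (i), $\nu=0$, is immediate since every element of $A$ then has degree at most $lm-1$. For (ii) and (iii), I argue by contradiction: suppose
$$g(t)=d_0+d_1t+\cdots+d_{lm-1}t^{lm-1}+\nu\rho(d_0)t^{lm}\in A$$
has degree exactly $lm$ with all irreducible factors similar to $f$. Then $\nu\rho(d_0)\neq 0$, forcing $d_0\neq 0$. By hypothesis every polynomial similar to $f$ lies in $E[t;\sigma]$, so the factorization $g=p_1\cdots p_l$ into irreducibles similar to $f$ has each $p_i\in E[t;\sigma]$, whence $g\in E[t;\sigma]$; in particular $d_0$ and $\nu\rho(d_0)$ both belong to $E$. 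For (ii), $\rho|_E\in{\rm Aut}(E)$ gives $\rho(d_0)\in E$, so $\nu=(\nu\rho(d_0))\rho(d_0)^{-1}\in E$, contradicting $\nu\notin E$.

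For (iii), set $\tilde g=(\nu\rho(d_0))^{-1}g\in E[t;\sigma]$, which is monic of degree $lm$ with monic irreducible factors $P_1,\ldots,P_l$ similar to $f$ (and hence lying in $E[t;\sigma]$). As in the proof of Theorem \ref{divisors in h when R=D[t,sigma]}, each $P_i$ shares the minimal central left multiple $h$ with $f$ by Lemma \ref{Similar implies same mclm}(i), so Theorem \ref{Sheekey Theorem 3 generalisation}(iii) applied to each $P_i$ gives $N_{E/F}((P_i)_0)=N_{E/F}(a_0)$; multiplying over $i=1,\ldots,l$ and using $\tilde g_0=\prod_i(P_i)_0$ yields $N_{E/F}(\tilde g_0)=N_{E/F}(a_0)^l$, where $\tilde g_0=(\nu\rho(d_0))^{-1}d_0\in E$. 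Applying $N_{F/F'}$ and using transitivity $N_{E/F'}=N_{F/F'}\circ N_{E/F}$ gives $N_{E/F'}(\tilde g_0)=N_{E/F'}(a_0)^l$. Finally, take $N_{E/F'}$ of the identity $d_0=\nu\rho(d_0)\tilde g_0$ (holding in $E$); since $\rho|_E$ is an $F'$-automorphism of $E$ (so $N_{E/F'}\circ\rho=N_{E/F'}$), multiplicativity gives
$$N_{E/F'}(d_0)=N_{E/F'}(\nu)\,N_{E/F'}(d_0)\,N_{E/F'}(\tilde g_0),$$
and cancelling the nonzero factor $N_{E/F'}(d_0)$ produces $1=N_{E/F'}(\nu)N_{E/F'}(a_0)^l$, contradicting the hypothesis of (iii).

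The main obstacle is the structural step showing $g\in E[t;\sigma]$ from the assumption that its irreducible factors are all similar to $f$: this feeds both the $\nu\in E$ deduction in (ii) and the norm-splitting in (iii). Once it is in place, everything else is a direct combination of Theorem \ref{Sheekey Theorem 3 generalisation}(iii), Lemma \ref{Similar implies same mclm}(i), and transitivity of norms, paralleling the $l=1$ analysis of Theorem \ref{thm:division2}.
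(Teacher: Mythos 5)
Your overall strategy --- reduce to Theorem \ref{thm:MRD} by ruling out elements of $A$ of degree $lm$ all of whose irreducible factors are similar to $f$, then apply the norm identity of Theorem \ref{divisors in h when R=D[t,sigma]} exactly as in the $l=1$ case (Theorem \ref{thm:division2}) --- is the route the paper intends (it offers nothing beyond ``the proof is straightforward''), and case (i) as well as the norm manipulations in (iii) are correct as far as they go. The problem is the structural step you yourself single out: the claim that $g\in E[t;\sigma]$, hence $d_0,\nu\rho(d_0)\in E$. The hypothesis only says that the \emph{monic} polynomials similar to $f$ lie in $E[t;\sigma]$; the factors $p_i$ in an arbitrary irreducible factorization of $g$ are merely unit multiples of such monic polynomials. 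Normalizing gives $g=\nu\rho(d_0)\,\tilde g$ with $\tilde g=q_1\cdots q_l\in E[t;\sigma]$ monic, but the leading unit $\nu\rho(d_0)$ is an arbitrary element of $D^\times$, and since $\rho(E)=E$ one has $\nu\rho(d_0)\in E$ essentially only when $d_0\in E$, which is not given. (The stronger reading ``all polynomials similar to $f$, monic or not, lie in $E[t;\sigma]$'' cannot be meant, since it would force $D^\times\subset E$, i.e.\ $D=E$.) Consequently, in (ii) the relation $d_0=\nu\rho(d_0)\tilde g_0$ only yields $\nu=d_0\tilde g_0^{-1}\rho(d_0)^{-1}$, which need not lie in $E$ when $d_0\in D\setminus E$, so no contradiction with $\nu\notin E$ is reached; and in (iii) the expressions $N_{E/F'}(d_0)$ and the cancellation step are not even defined unless $d_0\in E$.

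What the hypothesis actually gives you is the single relation $c=\nu\rho(c)\rho(\tilde g_0)$ with $c=\nu\rho(d_0)\in D^\times$ arbitrary and $\tilde g_0\in E^\times$ satisfying $N_{E/F}(\tilde g_0)=N_{E/F}(a_0)^l$; excluding this requires an additional argument (for instance via the reduced norm of $D$ applied to $c$) that you do not supply. Your proof is complete in the case $D=E=K$ a field, where every coefficient automatically lies in $E$, and it becomes complete in general the moment you can justify $d_0\in E$. To be fair, the paper's own proof of the $l=1$ case (Theorem \ref{thm:division2}) asserts ``$g$ lies in $E[t;\sigma]$ by assumption'' and glosses over exactly the same point, so your argument is faithful to the intended proof --- but as a self-contained derivation from the stated hypotheses, cases (ii) and (iii) have a genuine gap at the step you flagged.
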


The proof is straightforward.

\subsection{The case $R=K[t;\sigma]$}

Let $f=\sum_{i=0}^ma_it^i\in R=K[t;\sigma]$ be an irreducible monic polynomial of degree $m$  with minimal central left multiple $h$.
Suppose throughout this section that $F/F'$ is a finite field extension, $\nu\in K$, and $\rho\in {\rm Aut}(K)$. Let $1<l<k$ and
$S_{n,m,l}(\nu,\rho, f)=\lbrace a+Rh\,|\, a\in P\rbrace\subset R/Rh,$
where
 $P=\lbrace d_0+d_1t+\dots+d_{lm-1}t^{lm-1}+\nu\rho(d_0)t^{lm}\,|\, d_i\in K \rbrace.$
  Then we obtain the following results:

\begin{theorem} \label{thm:divisionK} Let $l=1$.\\
 (i)  Let $b(t)\in R_m$ with constant coefficient $b_0$. If $b(t)+\nu \rho (b_0)t^m\in P$ is reducible in $R$, then $b(t)$ is not a left zero divisor in $S(\nu,\rho, f)$.
\\ (ii) If $\nu=0$ then $S(\nu,\rho, f)$ is a division algebra over $F'$, a unital Petit algebra.
\\ (ii) If $P$ does not contain any polynomial similar to $f$,
then $S(\nu,\rho, f)$ is a division algebra over $F'$.
\end{theorem}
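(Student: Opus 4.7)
The plan is to adapt the proof of Theorem~\ref{thm:division3} essentially verbatim to the special case $D=K$: noncommutativity of $D$ played no essential role there, and the argument rests only on uniqueness of factorization up to similarity in the principal ideal domain $R=K[t;\sigma]$. For (i), I would suppose $b\circ c=0$ in $(R_m,\circ)$ with $c\neq 0$, set $a:=b+\nu\rho(b_0)t^m\in A$, and rewrite this as $ac=gf$ in $R$ for some $g\in R$. Since $f$ is irreducible of degree $m$ while $\deg c<m$, comparing irreducible factorizations on both sides via \cite[Theorem~1.2.9]{J96} forces $f$ to be similar to an irreducible factor of $a$. Because $\deg a\le m$, this can only happen when $a$ itself is irreducible of degree $m$ and similar to $f$; contrapositively, if $a$ is reducible, then $b$ is not a left zero divisor, giving (i).

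For (ii), the case $\nu=0$ collapses the multiplication to $b\circ c=bc\,\mathrm{mod}_r f$, which is precisely the Petit algebra $S_f=R/Rf$. It is unital with identity $1$, and because $f$ is irreducible it has no zero divisors; since $S_f$ is finite-dimensional over $F'$, injective left and right multiplications are automatically bijective, so $S_f$ is a division algebra. For (iii), the hypothesis that $A$ contains no polynomial similar to $f$ combined with part (i) shows that no nonzero $b\in R_m$ is a left zero divisor in $(R_m,\circ)$, since in that case the associated $a\in A$ is not similar to $f$. Hence $(R_m,\circ)$ has no zero divisors, and finite-dimensionality over $F'$ upgrades this to a division algebra exactly as in (ii).

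The only point requiring mild care is ensuring the similarity-based factorization argument applies uniformly when $D=K$ is a field rather than a general division algebra; this is immediate because $K$ is a division ring and $R=K[t;\sigma]$ inherits the same noncommutative Euclidean/PID structure used throughout the $D[t;\sigma]$ discussion, so no genuine obstacle arises and the proof is essentially a translation of Theorem~\ref{thm:division3}.
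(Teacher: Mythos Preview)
Your proposal is correct and follows essentially the same route as the paper, which simply states that the proof is analogous to that of Theorem~\ref{thm:division3}. You have in fact been slightly more explicit than the paper by spelling out the finite-dimensionality step that upgrades ``no left zero divisors'' to ``division algebra''; the paper's proof of Theorem~\ref{thm:division3} leaves this implicit.
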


The proof is analogous to the one of Theorem \ref{thm:division3}.
Note that $f$ may be right-invariant here. Using Theorems \ref{Sheekey Theorem 3 generalisation} and  \ref{Sheekey Theorem 5}, we obtain
(for finite fields, cf. \cite{Sheekey}, the proof is analogous):

\begin{theorem}\label{thm: divison algebra K}
Suppose that ${\rm deg}(h)=mn$. Then
$S(\nu,\rho, f)$ is a division algebra over $F'$ if
$$N_{K/F'}(a_0) N_{K/F'}(\nu)\not=1.$$
\end{theorem}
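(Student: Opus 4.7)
My plan is to imitate the argument for Theorem \ref{thm:division2}(ii), specialized to the commutative case $D = K$. I would proceed by contradiction using Theorem \ref{thm:divisionK}(iii): it suffices to show that $A$ contains no polynomial similar to $f$. The case $\nu = 0$ is already handled by Theorem \ref{thm:divisionK}(ii) (and is compatible with the hypothesis), so I may assume $\nu \ne 0$ throughout.

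Suppose toward contradiction that some $g = g_0 + g_1 t + \cdots + g_{m-1} t^{m-1} + \nu \rho(g_0) t^m \in A$ is similar to $f$. By Lemma \ref{Similar implies same mclm}, $g$ is then an irreducible factor of $h$ of degree $m$. Consequently the leading coefficient $g_m = \nu\rho(g_0)$ is nonzero, which together with $\nu \ne 0$ forces $g_0 \ne 0$. Rescaling to $g_m^{-1} g$ produces a monic degree-$m$ divisor of $h$, to which I would apply Theorem \ref{thm: divisors degree lm} with $l = 1$ to obtain
$$N_{K/F}(g_m^{-1} g_0) = N_{K/F}(a_0),$$
and then substitute $g_m = \nu\rho(g_0)$ and rearrange to
$$N_{K/F}(g_0) = N_{K/F}(a_0)\, N_{K/F}(\nu)\, N_{K/F}(\rho(g_0)).$$

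Next I would apply $N_{F/F'}$ to both sides and invoke transitivity of norms to get the same identity with $N_{K/F'}$ throughout. Since $F' \subset {\rm Fix}(\rho)$, the automorphism $\rho$ restricts to an $F'$-automorphism of $K$ and hence permutes the $F'$-embeddings of $K$ into an algebraic closure, so $N_{K/F'}(\rho(g_0)) = N_{K/F'}(g_0)$. Cancelling the nonzero factor $N_{K/F'}(g_0)$ from both sides yields $1 = N_{K/F'}(a_0)\,N_{K/F'}(\nu)$, contradicting the hypothesis. The only subtle step is this $\rho$-invariance of $N_{K/F'}$, which relies on standard facts about finite separable field extensions and is used without comment in the proof of Theorem \ref{thm:division2}; otherwise the argument is a direct specialization of that theorem to the commutative case, where the two-step norm $N_{F/F'}\circ N_{E/F}$ collapses into the single norm $N_{K/F'}$.
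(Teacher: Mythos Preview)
Your proof is correct and follows essentially the same route as the paper's: reduce via Theorem~\ref{thm:divisionK} to showing $A$ contains no polynomial similar to $f$, normalize any such $g$ to a monic divisor of $h$, apply Theorem~\ref{thm: divisors degree lm} to obtain the norm identity $N_{K/F}(g_m^{-1}g_0)=N_{K/F}(a_0)$, substitute $g_m=\nu\rho(g_0)$, descend to $F'$ via $N_{F/F'}$, and cancel using $\rho$-invariance of $N_{K/F'}$. The only cosmetic difference is that you dispose of the case $\nu=0$ upfront by citing Theorem~\ref{thm:divisionK}(ii), whereas the paper handles it inside the argument by noting that $g_m=0$ would contradict $\deg g = m$.
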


\begin{corollary}
 $B={\rm Nuc}_r(S_f)$ is a division algebra and the left multiplication of the algebra
$S(\nu,\rho, f)$ 
defines an $F'$-linear MRD-code in $M_k(B)$ with minimum distance $k$,  if one of the following holds:
 \\ (i) $\nu=0$.
 \\ (ii) $P$ does not contain any polynomial similar to $f$.
 \\ (iii) ${\rm deg}(h)=mn$ and $\nu \in K$  such that
$N_{K/F'}(\nu)\neq 1/N_{K/F'}(a_0).$
In this case, the algebra $S(\nu,\rho, f)$ defines an $F'$-linear MRD-code
 in $M_n(E_{\hat{h}})$ with minimum distance $n$.
\end{corollary}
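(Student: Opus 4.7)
The plan is to reduce each of the three conditions to a division-algebra criterion already established in Section~6.2, and then invoke the left-multiplication description of $\mathcal{C}(S)$ from Section~3 to pass from ``$S(\nu,\rho,f)$ is a division algebra'' to ``$\mathcal{C}(S)$ is an $F'$-linear MRD code of minimum distance $k$.''

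First I would set the stage. Since $f$ is irreducible of degree $m>1$, Theorem~\ref{thm:main2} guarantees that $B={\rm Nuc}_r(S_f)$ is a central division algebra over $E_{\hat h}$ and that $R/Rh\cong M_k(B)$, so the representation $\lambda\colon S(\nu,\rho,f)\to M_k(B)$, $a\mapsto M_a$, of Section~3 lands in an honest matrix ring over a division ring. Under the identification $S(\nu,\rho,f)\cong(B^k,\ast)$ spelled out in the remark in Section~6.1 (which goes through verbatim in the commutative setting $R=K[t;\sigma]$, i.e.\ $d=1$), the algebra product is literally $\underline{a}\ast\underline{b}=M_a\,\underline{b}$. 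Consequently $S(\nu,\rho,f)$ is a division algebra if and only if every nonzero $M_a\in\mathcal{C}(S)$ has trivial right kernel on $B^k$, which, since $B$ is a division ring, is equivalent to $M_a$ having full column rank $k$.

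For each of the three sufficient conditions I would simply cite the corresponding division-algebra theorem from this subsection. In case (i), the product $b\circ c=bc\,{\rm mod}_r f$ collapses to the Petit product on $S_f$, and irreducibility of $f$ suffices by Theorem~\ref{thm:divisionK}(ii). Case (ii) is Theorem~\ref{thm:divisionK}(iii) verbatim. Case (iii) is Theorem~\ref{thm: divison algebra K}, whose hypothesis $N_{K/F'}(a_0)N_{K/F'}(\nu)\neq 1$ is exactly the one stated.

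Combining these inputs, in every case every nonzero matrix of $\mathcal{C}(S)$ has full column rank $k$, so $d_{\mathcal{C}}=k$; since $l=1$ this equals the Singleton-like bound $k-l+1$, and $\mathcal{C}(S)$ is an $F'$-linear generalized MRD code of minimum distance $k$. For the explicit shape asserted in case (iii), Theorem~\ref{thm:main2} applied with $d=1$ gives $\deg(h)=nm/s$, so the hypothesis $\deg(h)=mn$ forces $s=1$; then $k=n$ and $B=E_{\hat h}$ is a field, yielding an MRD code in $M_n(E_{\hat h})$ of minimum distance $n$. I do not anticipate a real obstacle: once one trusts the dictionary between the division-algebra property of $S(\nu,\rho,f)$ and the full-column-rank property of $\mathcal{C}(S)$, the argument is a short bookkeeping exercise combining Theorems~\ref{thm:divisionK}, \ref{thm: divison algebra K} and~\ref{thm:main2}.
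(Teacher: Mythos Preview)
Your proposal is correct and mirrors the paper's own (implicit) reasoning: the corollary is stated without proof and is meant to follow immediately from Theorems~\ref{thm:divisionK} and~\ref{thm: divison algebra K} together with the observation at the end of Section~\ref{sec:matrices} that a division algebra yields invertible matrices and hence an MRD code with $d_{\mathcal{C}}=k$. Your additional remark that $\deg(h)=mn$ forces $s=1$ (hence $k=n$ and $B=E_{\hat h}$) via Theorem~\ref{thm:main2} is exactly the refinement needed for the last sentence of case~(iii).
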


Note that the condition on $f$ in (iii) is satisfied for all $f$ if $gcd(m,n)=1$ or if $n$ is prime.

We now look at the case that   $1<l<k$ and also assume that $f$ is not right-invariant.

\begin{theorem} (for finite fields, cf. \cite[Theorem 7]{Sheekey})
If ${\rm deg}(h)=mn$, then the set $S_{n,m,l}(\nu,\rho, f)$
defines an $F'$-linear MRD-code in $M_n(E_{\hat{h}})$ with minimum distance $n-l+1$ for any $\nu \in K$  such that
$$N_{K/F'}(\nu)\neq 1/N_{K/F'}(a_0)^l.$$
\end{theorem}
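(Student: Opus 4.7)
The plan is to reduce the claim to Theorem~\ref{thm:MRD}, which says that $\mathcal{C}(S)$ is an $F'$-linear generalized MRD code with the stated distance provided that $A$ does not contain any polynomial of degree $lm$ whose irreducible factors are all similar to $f$. Since $\deg(h)=mn$, once this criterion is met the resulting code lives in $M_n(E_{\hat h})$ with minimum distance $n-l+1$, so I only need to exclude such a polynomial from $A$ under the given norm hypothesis.

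I would argue by contradiction. Suppose $g\in A$ has degree exactly $lm$ and every irreducible factor of $g$ is similar to $f$. Writing $g=d_0+d_1t+\cdots+d_{lm-1}t^{lm-1}+\nu\rho(d_0)t^{lm}$, the assumption $\deg(g)=lm$ forces $\nu\rho(d_0)\neq 0$, hence $d_0\neq 0$ and $\nu\neq 0$. By Lemma~\ref{Similar implies same mclm}(ii), the irreducible factors of $h$ form a single similarity class containing $f$, so every polynomial whose irreducible factors are all similar to $f$ divides $h$ (up to scalar); in particular $g\mid h$ in $R$. The normalization $\tilde g=(\nu\rho(d_0))^{-1}g$ is then a monic divisor of $h$ of degree $lm$ with constant term $\tilde g_0=(\nu\rho(d_0))^{-1}d_0$.

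Now I would apply Theorem~\ref{thm: divisors degree lm} to $\tilde g$, obtaining
$$N_{K/F}\bigl((\nu\rho(d_0))^{-1}d_0\bigr)=N_{K/F}(a_0)^l,$$
which rearranges to
$$N_{K/F}(d_0)=N_{K/F}(a_0)^l\,N_{K/F}(\nu)\,N_{K/F}(\rho(d_0)).$$
Applying $N_{F/F'}$ to both sides and using the tower formula $N_{K/F'}=N_{F/F'}\circ N_{K/F}$ gives
$$N_{K/F'}(d_0)=N_{K/F'}(a_0)^l\,N_{K/F'}(\nu)\,N_{K/F'}(\rho(d_0)).$$
Since $\rho\in\mathrm{Aut}(K)$ fixes $F'$ pointwise (as $F'\subset\mathrm{Fix}(\rho)$), the norm $N_{K/F'}$ is invariant under $\rho$, so $N_{K/F'}(\rho(d_0))=N_{K/F'}(d_0)$. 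Because $d_0\neq 0$, the value $N_{K/F'}(d_0)$ is a nonzero element of $F'$ and may be cancelled, yielding
$$N_{K/F'}(\nu)\,N_{K/F'}(a_0)^l=1,$$
which contradicts the standing hypothesis $N_{K/F'}(\nu)\neq 1/N_{K/F'}(a_0)^l$.

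I do not expect a major obstacle: the bulk of the work is already encapsulated in Theorem~\ref{thm: divisors degree lm} and Theorem~\ref{thm:MRD}. The only subtleties to watch are (a) verifying that a hypothetical $g\in A$ of degree $lm$ whose irreducible factors are similar to $f$ genuinely divides $h$, which requires invoking the similarity structure of the factorization of $h$, and (b) justifying the cancellation step by confirming that $d_0$, and hence $N_{K/F'}(d_0)$, is nonzero. Both points follow directly from the form of the polynomials in $A$ together with the assumption $\deg(g)=lm$.
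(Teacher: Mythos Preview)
Your overall strategy is right and mirrors the paper's approach: reduce to the criterion in Theorem~\ref{thm:MRD}, then rule out the offending $g\in A$ via Theorem~\ref{thm: divisors degree lm} and a norm cancellation. The norm manipulation at the end is correct, including the use of $N_{K/F'}(\rho(d_0))=N_{K/F'}(d_0)$.

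There is, however, a genuine gap at the point where you assert $g\mid h$. Lemma~\ref{Similar implies same mclm}(ii) only says that the irreducible factors of $h$ lie in a single similarity class; it does \emph{not} say that every product of irreducibles from that class divides $h$. That converse can fail: for instance $f\cdot f$ is a product of two irreducibles similar to $f$, yet $R/Rf^2$ need not be semisimple, so $f^2\nmid h$ in general. Since Theorem~\ref{thm: divisors degree lm} is stated for monic \emph{divisors} of $h$, you cannot invoke it from your current hypothesis on $g$.

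Two easy repairs are available. First, you can bypass Theorem~\ref{thm:MRD} and argue directly from Theorem~\ref{rank of a polynomial km}: if some nonzero $a\in A$ has $\text{rank}(M_a)<n-l+1$ then $\deg(\gcrd(a,h))\geq lm$, and since $\gcrd(a,h)$ right-divides $a$ with $\deg(a)\leq lm$, the element $a$ is a scalar multiple of $\gcrd(a,h)$ and hence genuinely divides $h$; now Theorem~\ref{thm: divisors degree lm} applies. This is in effect what the proof of Theorem~\ref{thm:MRD} already does. Second, you can keep your reduction via Theorem~\ref{thm:MRD} but skip the divisibility claim: writing the monic associate $\tilde g=g_1\cdots g_l$ with each $g_i$ monic irreducible similar to $f$, Lemma~\ref{Similar implies same mclm}(i) gives $\mathrm{mclm}(g_i)=h$, so Theorem~\ref{thm:norm4}(iii) yields $N_{K/F}(g_i(0))=N_{K/F}(a_0)$; since constant terms multiply in $K[t;\sigma]$, you obtain $N_{K/F}(\tilde g_0)=N_{K/F}(a_0)^l$ directly, which is all you need. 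Either route closes the gap with no change to the rest of your argument.
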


Note that $k=n$ here since ${\rm deg}(h)=mn$.

\begin{corollary}
 The set $S_{n,m,l}(\nu,\rho, h)$
defines an $F'$-linear MRD-code in $M_n(E_{\hat{h}})$ with minimum distance $n-l+1$, if one of the following holds:
\\ (i) ${\rm deg}(h)=mn$ and $\nu=0$,
\\ (ii) $n$ is prime or $gcd(m,n)=1$, and $1\not=N_{K/F'}(\nu)N_{K/F'}(a_0)^l$
\\ (iii) ${\rm deg}(h)=mn$ and $N_{K/F'}(\nu)\not \in ({F'}^\times)^l$.
\end{corollary}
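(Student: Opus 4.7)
My plan is to reduce each of (i), (ii), (iii) to the theorem stated immediately before this corollary, which asserts that $S_{n,m,l}(\nu,\rho,f)$ is an $F'$-linear MRD code in $M_n(E_{\hat{h}})$ with minimum distance $n-l+1$ whenever $\deg(h)=mn$ and $N_{K/F'}(\nu)\neq N_{K/F'}(a_0)^{-l}$. So in each case I need only supply two ingredients: the degree condition $\deg(h)=mn$ and a norm inequality.

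The first thing I would do is verify $\deg(h)=mn$ in case (ii). In the field case we have $d=1$, so Theorem \ref{thm:main2} gives $k=n/s$ and $\deg(h)=mn/s$ with $s\mid\gcd(m,n)$. If $\gcd(m,n)=1$, this forces $s=1$ immediately. If $n$ is prime, then $s\in\{1,n\}$; but $s=n$ would give $k=1$, contradicting the standing assumption (just above the corollary) that $f$ is not right-invariant, since by the last sentence of Theorem \ref{thm:main2} this is equivalent to $k>1$. Hence $\deg(h)=mn$ in both subcases of (ii); in (i) and (iii) the condition is assumed explicitly.

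Case (i) is then immediate: when $\nu=0$, every element of $A$ has degree at most $lm-1$, so $A$ contains no polynomial of degree $lm$ at all, and Theorem \ref{thm:MRD} directly yields the MRD property with minimum distance $k-l+1=n-l+1$. For (ii), the hypothesis $N_{K/F'}(\nu)N_{K/F'}(a_0)^l\neq 1$ rearranges at once to $N_{K/F'}(\nu)\neq N_{K/F'}(a_0)^{-l}$, so the preceding theorem applies. For (iii), note that $N_{K/F'}(a_0)^{-l}=(N_{K/F'}(a_0)^{-1})^l$ lies in $(F'^\times)^l$; hence $N_{K/F'}(\nu)\notin(F'^\times)^l$ implies $N_{K/F'}(\nu)\neq N_{K/F'}(a_0)^{-l}$, and the preceding theorem again applies.

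The only mildly nontrivial step is the degree deduction in (ii), where the non-right-invariance of $f$ together with $n$ prime is used to exclude the bad value $s=n$; everything else is bookkeeping with norms and a rewrite of an inequality.
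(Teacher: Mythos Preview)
Your proposal is correct and follows essentially the same route as the paper. The paper does not give a formal proof of this corollary, but the two remarks immediately following it (that the degree condition $\deg(h)=mn$ is automatic when $\gcd(m,n)=1$ or $n$ is prime, and that $N_{K/F'}(\nu)\notin (F'^\times)^l$ forces the norm inequality needed in the preceding theorem) are exactly the reductions you carry out; your argument for the degree condition via Theorem~\ref{thm:main2} and the non-right-invariance hypothesis is the correct justification of the first of these remarks.
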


The codes $S_{n,m,l}(0,\rho, h)$ generalize the  Gabidulin codes constructed in \cite{Gabidulin} that go back to \cite{D}.

Note that $N_{K/F'}(\nu)\not \in ({F'}^\times)^l$ implies $N_{K/F'}(\nu)\not =N_{K/F'}(a_0)^l$ for any $f$. Thus (ii) implies (iii) above.

\begin{theorem}
Suppose that $P$ does not contain any polynomial of degree $lm$, whose irreducible factors are all similar to $f$. Then the set
$S_{n,m,l}(\nu,\rho, f)$
defines an $F'$-linear MRD-code in $M_k(B)$  with minimum distance $k-l+1$.
\\ In particular, if ${\rm deg}(h)=mn$ then $S_{n,m,l}(\nu,\rho, f)$
defines an $F'$-linear MRD-code in $M_n(E_{\hat{h}})$ with minimum distance $n-l+1$.
\end{theorem}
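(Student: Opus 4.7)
The plan is to mimic the proof of Theorem \ref{thm:MRD} essentially verbatim in the commutative setting $R=K[t;\sigma]$; indeed the statement is a direct specialization of that theorem to $D=K$ together with the observation about the form of $B$ when $\deg(h)=mn$.

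First, I would reduce to a statement about greatest common right divisors. Given a nonzero class $a+Rh\in S_{n,m,l}(\nu,\rho,f)$ with representative $a\in A$, Theorem \ref{rank of a polynomial km} yields
$$\mathrm{colrank}(M_a)=k-\frac{1}{m}\deg(\gcd_r(a,h)),$$
so the minimum distance of $\mathcal{C}(S)$ is at least $k-l+1$ as soon as every nonzero $a\in A$ satisfies $\deg(\gcd_r(a,h))\le (l-1)m$. Dually, the dimension count in Section \ref{sec:matrices} shows that equality in the Singleton-like bound is attained precisely when this minimum distance is $k-l+1$, so this is the only inequality to establish.

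Next, I would argue by contradiction. Suppose some nonzero $a\in A$ has $\deg(\gcd_r(a,h))=lm$. Since every element of $A$ has degree at most $lm$, the greatest common right divisor $\gcd_r(a,h)$ has the same degree as $a$, hence equals $a$ up to a scalar in $K^\times$. In particular, after rescaling by the leading coefficient (which does not affect right divisibility by $h$ nor the similarity class of irreducible factors), the rescaled polynomial is a monic right divisor of $h$ in $R$. Now by Lemma \ref{Similar implies same mclm}(ii), all irreducible factors of $h$ are similar to $f$, and by \cite[Theorem 1.2.9]{J96} any factorization of a right divisor of $h$ into irreducibles produces factors similar (up to permutation) to some irreducible factors of $h$, hence all similar to $f$. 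Thus $a\in A$ is a polynomial of degree $lm$ all of whose irreducible factors are similar to $f$, contradicting the hypothesis. This proves the first assertion.

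Finally, for the ``in particular'' statement, I specialize Theorem \ref{thm:main2} to the present case $D=K$ (so $d=1$): the identity $ks=dn=n$ combined with $\deg(h)=km=mn$ forces $k=n$ and $s=1$, so the right nucleus $B={\rm Nuc}_r(S_f)$ is a field of degree $s^2=1$ over $E_{\hat{h}}$, i.e.\ $B=E_{\hat{h}}$. Hence $\mathcal{C}(S)\subset M_n(E_{\hat{h}})$ and the minimum distance becomes $n-l+1$, as claimed. There is no real obstacle in this proof; it is a direct transcription of Theorem \ref{thm:MRD} together with a bookkeeping argument about $k$ and $s$. The only mild subtlety is the rescaling step to pass from an arbitrary element of $A$ to a monic divisor of $h$, which is harmless because $K$ is a field.
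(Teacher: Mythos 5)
Your proposal is correct and follows essentially the same route as the paper's own proof: reduce via Theorem \ref{rank of a polynomial km} to bounding $\deg(\mathrm{gcrd}(a,h))$ by $(l-1)m$, derive a contradiction from a degree-$lm$ gcrd by noting the element would then be a (right) divisor of $h$ whose irreducible factors are all similar to $f$, and obtain the "in particular" case by the bookkeeping $k=n$, $s=1$, $B=E_{\hat{h}}$ from Theorem \ref{thm:main2}. The rescaling remark and the explicit Singleton-bound bookkeeping are minor elaborations of what the paper leaves implicit.
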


\begin{proof}
We have to show that the minimum column rank of the matrix corresponding to a nonzero element in $S_{n,m,l}(\nu,\rho, f)$ is $k-l+1$. By Theorem \ref{rank of a polynomial km},
  this is equivalent to finding an element $g\in P$ such that the greatest common right divisor of $g$ and $h$ has degree at most $(l-1)m$. Suppose towards a contradiction that $\text{deg}(gcrd(g,h))=lm$; since $\text{deg}(g)\leq lm$, it follows that $g$ must be a divisor of $h$.

As any divisor of $h$ is a product of irreducible polynomials similar to $f$,  $g$ must be a product of polynomials similar to $f$. This contradicts our assumption, so any matrix has rank at least $k-l+1$.
\end{proof}

%
%

\section{Nuclei}

 Let
$\mathcal{M}=\mathcal{M}(A)=\lbrace L_a \,|\,a\in A\rbrace\subseteq {\rm End}_F(A)$
 be the spread set of an $F$-algebra $A$, where $L_a$ is the left multiplication map in $A$.
We define the \textit{left} and \textit{right idealisers} of $\mathcal{M}$ as
$$I_l(\mathcal{M})=\lbrace \Phi\in {\rm End}_F(A)\,|\, \Phi\mathcal{M}\subseteq\mathcal{M}\rbrace, \text{ respectively, }
I_r(\mathcal{M})=\lbrace \Phi\in {\rm End}_F(A)\,|\, \mathcal{M}\Phi\subseteq\mathcal{M}\rbrace.$$
The \textit{centraliser} of $\mathcal{M}$ is defined as
${\rm Cent}(\mathcal{M})=\lbrace \Phi\in {\rm End}_{F}(A)\,|\, \Phi M=M\Phi \quad \forall M\in \mathcal{M}\rbrace.$ We call
$Z(\mathcal{M})=I_l(\mathcal{M})  \cap{\rm Cent}(\mathcal{M})$ the \textit{center} of $\mathcal{M}$.

\begin{theorem}(cf. \cite[Proposition 5]{Sheekey}  for finite fields) \label{Nucleus of Sheekey algebras}
Let $A$ be a unital division algebra and $\mathcal{M}$ be the spread set of $A$. Let $\mathcal{M}^*$ be the the spread set associated to the opposite algebra $A^{op}$. Then
$${\rm Nuc}_l(A)\cong I_l(\mathcal{M}),\quad {\rm Nuc}_m(A)\cong I_r(\mathcal{M}),\quad
{\rm Nuc}_r(A)\cong {\rm Cent}(\mathcal{M}^*),\quad C(A)\cong Z(\mathcal{M}).$$
\end{theorem}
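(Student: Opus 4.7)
The approach is uniform across all four identifications: I would use the unit $1\in A$ to force every endomorphism $\Phi$ in the given subset of ${\rm End}_F(A)$ to coincide with a left or right multiplication by $c := \Phi(1)\in A$, and then translate the defining property of the subset into one of the associativity identities $(c,a,b)=0$, $(a,c,b)=0$, or $(a,b,c)=0$ that characterise the nuclei. Throughout, only the unitality of $A$ (not its division-algebra structure) is needed.

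The case ${\rm Nuc}_l(A)\cong I_l(\mathcal{C})$ is the prototype. Given $\Phi\in I_l(\mathcal{C})$, write $\Phi L_a=L_{g(a)}$ and evaluate at $1$ to identify $g(a)=\Phi(a)$, so that $\Phi(ab)=\Phi(a)b$ for all $a,b\in A$; the choice $a=1$ then forces $\Phi=L_c$ with $c=\Phi(1)$, and the remaining identity $c(ab)=(ca)b$ is precisely the statement $c\in{\rm Nuc}_l(A)$. The converse is immediate, so $c\mapsto L_c$ is the desired isomorphism. The argument for ${\rm Nuc}_m(A)\cong I_r(\mathcal{C})$ is formally identical starting from $L_a\Phi=L_{g(a)}$: evaluation at $1$ gives $g(a)=a\Phi(1)$, and after setting $a=1$ the remaining identity $a(cb)=(ac)b$ is $c\in{\rm Nuc}_m(A)$.

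For ${\rm Nuc}_r(A)\cong C(\mathcal{C}^*)$, I note that $\mathcal{C}^*$ consists of the right multiplications $R_a$ of $A$, since these are precisely the left multiplications in $A^{op}$. The natural route is to apply the first identification to $A^{op}$, whose left nucleus equals ${\rm Nuc}_r(A)$ and whose spread set is $\mathcal{C}^*$; combined with the unit-evaluation trick for the centraliser (the identity $\Phi R_a = R_a\Phi$ at $x=1$ again pins $\Phi$ down as a multiplication by $c=\Phi(1)$), this produces the stated isomorphism and the implementing map.

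Finally, for $C(A)\cong I_l(\mathcal{C})\cap C(\mathcal{C})$, any $\Phi$ in the intersection is simultaneously of the form $L_c$ (from $I_l(\mathcal{C})$, so that $c\in{\rm Nuc}_l(A)$) and commutes with every $L_a$; evaluating the commutation $\Phi L_a=L_a\Phi$ at $1$ produces $ca=ac$ for all $a\in A$. A short calculation expanding $(x,c,y)$ and $(x,y,c)$ using $c\in{\rm Nuc}_l(A)$ together with $ca=ac$ shows that $c$ lies in ${\rm Nuc}_m(A)\cap{\rm Nuc}_r(A)$ as well, placing $c$ in $C(A)$. The main obstacle will be the careful bookkeeping between $L_a$, $R_a$, $\mathcal{C}$ and $\mathcal{C}^*$ in the third identification, since passing to the opposite algebra swaps left and right multiplications; once this is sorted out, the remaining steps reduce to the uniform evaluation-at-$1$ template used in the first two cases.
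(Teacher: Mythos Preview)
Your evaluation-at-$1$ strategy is the standard route, and since the paper gives no independent proof (it simply records that Sheekey's argument carries over verbatim), there is nothing to compare beyond noting that your template for $I_l(\mathcal{C})$, $I_r(\mathcal{C})$ and $I_l(\mathcal{C})\cap C(\mathcal{C})$ is correct and is exactly the expected one.

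The third identification, however, does not close the way you sketch it, and your own caveat about ``careful bookkeeping'' is on point. With $\mathcal{C}^*=\{R_a:a\in A\}$, the centraliser condition $\Phi R_a=R_a\Phi$ reads $\Phi(xa)=\Phi(x)\,a$; evaluating at $x=1$ forces $\Phi=L_c$ with $c=\Phi(1)$, and the residual identity is $c(xa)=(cx)a$, i.e.\ $c\in{\rm Nuc}_l(A)$, \emph{not} ${\rm Nuc}_r(A)$. Your alternative route, applying the first identification to $A^{op}$, correctly gives ${\rm Nuc}_r(A)={\rm Nuc}_l(A^{op})\cong I_l(\mathcal{C}^*)$ via $c\mapsto R_c$; but $I_l(\mathcal{C}^*)$ coincides with $C(\mathcal{C})$ (both equal $\{R_c:c\in{\rm Nuc}_r(A)\}$), not with $C(\mathcal{C}^*)$. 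In fact the same calculation shows $C(\mathcal{C}^*)=I_l(\mathcal{C})=\{L_c:c\in{\rm Nuc}_l(A)\}$. So the statement ${\rm Nuc}_r(A)\cong C(\mathcal{C}^*)$, read literally, would force ${\rm Nuc}_l(A)\cong{\rm Nuc}_r(A)$; this is automatic over finite fields (all nuclei are finite fields of the same order) but is not a general fact. You should either flag this as an apparent slip in the stated theorem---the intended object is $C(\mathcal{C})$ (equivalently $I_l(\mathcal{C}^*)$)---or supply a separate argument identifying $C(\mathcal{C}^*)$ with ${\rm Nuc}_r(A)$ that does not collapse to the left-nucleus computation above.
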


The proof  from \cite{Sheekey} holds verbatim in our general setting.

 The above results can now be applied to determine the nuclei and center of  the non-unital algebras $S=S_{n,m,l}(\nu,\rho,f)$.

 In the following let $R=D[t;\sigma]$. We use the assumptions on $D$, respectively $K$, and $\sigma$  from Section \ref{sec:division}.

Let $f\in R$ be an irreducible monic polynomial of degree $m$, 
 and
let $h$  be the minimal central left multiple of $f$.  We assume throughout that $f$ is not right-invariant, so that $k>1$. 

\begin{remark}
 The algebras $S_{n,m,l}(0,\rho,f)$ are unital Petit algebras, hence have left nucleus ${\rm Nuc}_m(S)=D$, and their right nucleus $\lbrace g\in R_m\mid fg\in Rf\rbrace$ is the eigenspace of $f$. If $S_{n,m,l}(0,\rho,f)$ is not associative then $\{d\in D\,|\, dg=gd \text{ for all }g\in S\}$ is their center \cite{P66}.
\end{remark}

\begin{theorem} \label{theorem 9I}
Let $R=D[t;\sigma]$ and ${\rm deg}(h)=dmn$. Suppose $l\leq dn/2$, $n>1$ and $lm>2$. Let $S=S_{n,m,l}(\nu,\rho,f)$ and $\mathcal{M}$ be the image of $S$ in ${\rm End}_{E_f}(R/Rf)$, that means the corresponding rank metric code lies in $M_n(E_{\hat{h}})$. If $\nu\neq 0$, we have
\\ (i) $I_l(\mathcal{M})\cong \lbrace g_0\in D \,|\, g_0\nu=\nu\rho( g_0)\rbrace\subset D$ (in particular,
$I_l(\mathcal{M})\cong {\rm Fix}(\rho)$ if $\nu\in C$),
\\ (ii) $I_r(\mathcal{M})\cong {\rm Fix}(\rho^{-1}\circ\sigma^{lm})\subset D$,
\\ (iii) ${\rm Cent}(\mathcal{M})\cong E_{\hat{h}}$, $Z(\mathcal{M})  \cong F'$.
\\ If $\nu=0$, we have
\\ (iv) $I_l(\mathcal{M})\cong D$, $I_r(\mathcal{M})\cong D$,
\\ (v) ${\rm Cent}(\mathcal{M})\cong E_{\hat{h}}$, $Z(\mathcal{M})\cong F$.
\end{theorem}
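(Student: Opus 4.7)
The plan is to transfer everything to $R/Rh$ via the $E_{\hat h}$-algebra isomorphism $L:R/Rh\to {\rm End}_B(V_f)$, $c+Rh\mapsto L_c$, supplied by Theorem \ref{thm:main2}; this identifies $\mathcal C$ with $\bar A=\{a+Rh\mid a\in A\}\subset R/Rh$. In these terms $I_l(\mathcal C)$, $I_r(\mathcal C)$ and $C(\mathcal C)$ become the left stabiliser, right stabiliser and centraliser of $\bar A$ inside $R/Rh$. The numerical hypotheses $lm>2$, $l\le dn/2$ and $n>1$ are used to guarantee $2lm\le dmn=\deg h$, so the relevant products below stay in degree $<\deg h$ and coefficients of representatives can be compared directly without any reduction modulo $h$ interfering.

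For case (i), I would lift $c+Rh$ to its unique representative $c=\sum_i c_i t^i$ of degree $<\deg h$ and test against the elements $dt^j\in A$ with $d\in D$ and $1\le j\le lm-1$. An initial application with $j=1$ already forces $\deg c\le lm-2$ by matching the $t^{lm}$-coefficient of $ct=\sum_i c_it^{i+1}\in A$; once $\deg c\le lm-2$, no further reductions occur, and iterating $j=1,\dots,lm-1$ forces $c_i=0$ for all $i\ge 1$, so $c\in D$. Testing the last generator $1+\nu t^{lm}\in A$ then gives $c+c\nu t^{lm}\in A$, whose $t^{lm}$-coefficient $c\nu$ must equal $\nu\rho(c)$, as required. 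Sufficiency is a direct check using $c\cdot\nu\rho(d_0)=\nu\rho(c)\rho(d_0)=\nu\rho(cd_0)$. Case (ii) is handled by the same strategy applied on the right: using $t^jc=\sigma^j(c_0)t^j+\cdots$ in place of $ct^j$ forces $c\in D$, and then $(1+\nu t^{lm})c=c+\nu\sigma^{lm}(c)t^{lm}\in A$ yields $\sigma^{lm}(c)=\rho(c)$, i.e.\ $c\in{\rm Fix}(\rho^{-1}\sigma^{lm})$.

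For the centraliser in (iii) the inclusion $E_{\hat h}=Z(R/Rh)\subseteq C(\mathcal C)$ is automatic, and for the converse the key lemma is that the subring of $R/Rh$ generated by $\bar A$ equals $R/Rh$; then any element commuting with $\bar A$ commutes with $R/Rh$ and lies in $E_{\hat h}$. Products of the form $(d_1t^{j_1})\cdots(d_rt^{j_r})=d_1\sigma^{j_1}(d_2)\sigma^{j_1+j_2}(d_3)\cdots t^{j_1+\cdots+j_r}$ with $1\le j_i\le lm-1$ produce $Dt^s$ for every $s\ge 1$. To create the missing constants, I would exploit the defining relation $h(t)\equiv 0\pmod{Rh}$: writing $\hat h(x)=x^{dm}+\alpha_{dm-1}x^{dm-1}+\cdots+\alpha_0$ with $\alpha_0\ne 0$ (which follows from $(f,t)_r=1$), and using $u\in{\rm Fix}(\sigma)$ so that $(u^{-1}t^n)^k=u^{-k}t^{kn}$, one obtains
\[ t^{dmn}\equiv -\alpha_{dm-1}u\,t^{(dm-1)n}-\cdots-\alpha_1u^{dm-1}t^n-\alpha_0 u^{dm}\pmod{Rh}. \]
Computing $(dt)(d't^{dmn-1})=d\sigma(d')t^{dmn}$ inside the subring (legitimate because $dmn\ge 2$ since $n>1$) and subtracting the summands of degree $\ge n$ on the right, which all lie in $Dt^s$ with $s\ge 1$, isolates $d\sigma(d')\alpha_0 u^{dm}$ in the subring. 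Varying $d,d'\in D$ and invoking bijectivity of $\sigma|_D$ together with $\alpha_0 u^{dm}\in E^\times$ gives $D\subseteq\text{subring}$, so together with $Dt^s$ one recovers all of $R/Rh$. Finally $Z(\mathcal C)=I_l(\mathcal C)\cap C(\mathcal C)$ consists of those $c\in D$ satisfying $c\nu=\nu\rho(c)$ that also lie in $E_{\hat h}$; since $D\cap E_{\hat h}=F$ in $R/Rh$ (the image of $F[u^{-1}t^n]$ meets the constants only in $F$), any such $c$ is in $F$, commutes with $\nu$, and the condition reduces to $c\in{\rm Fix}(\rho)$, giving $Z(\mathcal C)=F\cap{\rm Fix}(\rho)=F'$.

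Cases (iv) and (v) are analogous and easier: when $\nu=0$, the set $A=R_{lm}$ is stable under $D$-multiplication on either side, so $D\subseteq I_l(\mathcal C)\cap I_r(\mathcal C)$ is immediate, while the converse follows from testing against $t^{lm-1}\in A$ exactly as in (i)-(ii). The generating-subring argument goes through unchanged (in fact more easily, since $D\subseteq A$ is already in the generating set), so $C(\mathcal C)\cong E_{\hat h}$ and $Z(\mathcal C)=D\cap E_{\hat h}=F$. The main technical obstacle is the centraliser step in (iii): non-unitality of $S$ when $\nu\ne 0$ means $1\notin A$, so constants cannot be pulled out of the subring as $c\cdot 1$, and the indirect route through the relation $h(t)\equiv 0$ combined with the nonvanishing of $\alpha_0$ is what makes the subring-generation argument go through.
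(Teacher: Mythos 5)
Your overall strategy --- identify $\mathcal{C}$ with $\bar A=\{a+Rh\mid a\in A\}\subset R/Rh$, bound the degree of an idealiser element, reduce it to a constant, and then extract the defining condition from the generator $d_0+\nu\rho(d_0)t^{lm}$ --- is the same as the paper's (which follows Sheekey), and your endgame for (i)--(ii), your treatment of (iv)--(v), and your subring-generation argument for $C(\mathcal{C})$ are sound; the last of these is in fact a nice self-contained replacement for the paper's appeal to Sheekey. However, there is a genuine gap at exactly the point the paper flags as the main technical difficulty. You assert that the hypotheses guarantee the relevant products stay in degree $<\deg h$, so that ``coefficients of representatives can be compared directly without any reduction modulo $h$ interfering.'' That is false for the crucial first step of (i) and (ii): an element $c$ of $I_l(\mathcal{C})$ is an arbitrary element of ${\rm End}_{E_f}(V_f)\cong R/Rh$, so its canonical representative may have degree $dmn-1$, and then $ct$ contains the term $c_{dmn-1}t^{dmn}$, which must be reduced modulo $h$. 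Writing $h(t)=u^{-dm}t^{dmn}+\sum_{j<dm}h'_ju^{-j}t^{jn}$, this reduction feeds a multiple of $c_{dmn-1}$ into the coefficient of $t^{jn}$ for every $j<dm$. Hence $ct\bmod h\in A$ does \emph{not} force $c_{i-1}=0$ for all $i>lm$: for $i\equiv 0\bmod n$ it only forces $c_{i-1}=c_{dmn-1}u^{dm}h'_{i/n}$ (the displayed equation in the paper's proof), so your claim that ``$j=1$ already forces $\deg c\le lm-2$'' is not yet available.

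One must separately prove $c_{dmn-1}=0$, and this is precisely where the hypothesis $lm>2$ enters (the paper defers this step verbatim to Sheekey's Theorem 9, where it is obtained by further tests such as $ct^2\bmod h$ combined with the constraint linking the constant and $t^{lm}$ coefficients). Your proposal instead assigns the numerical hypotheses the role of ensuring $2lm\le dmn$, which only controls products of two elements of $A$ and does not address this wrap-around; the same issue recurs in (ii) for $t\cdot c$. Note that in the $\nu=0$ case the problem disappears because $1\in A$ bounds $\deg c$ immediately --- you correctly observed the analogous consequence of non-unitality for the centraliser, but missed it for the idealisers. Once $c_{dmn-1}=0$ is supplied, the remainder of your argument goes through.
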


Much of the proof works identically to the proof of \cite[Theorem 9]{Sheekey}. We sketch the proof to highlight the main differences in this more general case.
The $lm=2$ case has to be considered separately, and we have only been able to solve that for $F=\mathbb{R}$.

\begin{proof}
Let $\mathcal{M}=\{L_a\in {\rm End}_{E_f}(R/Rf)\,|\, a\in P\}$ be the image of $S$ in ${\rm End}_{E_f}(R/Rf)\subset {\rm End}_{F}(R/Rf)$. In the following, we identify each element in $\mathcal{M}$ with the element $g\in S$ that induces it.
 \\
Analogously to the proof of \cite[Theorem 9]{Sheekey}, $\lbrace g\in I_l(\mathcal{M}) \,|\, \text{deg}(g)\leq lm\rbrace= \lbrace g_0\in D \,|\, g_0\nu=\nu\rho(g_0)\rbrace$. If $\nu=0$, then $1\in\mathcal{M}$ so $I_l(\mathcal{M})\subset\mathcal{M}$ so all $g\in I_l(\mathcal{M})$ have degree at most $lm$.\\
Consider $\nu\neq 0$. To check there are no elements $g\in I_l(\mathcal{M})$ of degree higher than $lm$, we follow the approach of \cite[Theorem 9]{Sheekey} and consider $gt \text{ mod } \hat{h}(u^{-1}t^n)$. Recalling $\text{deg}(h)=dm$, we have $h(t)=(u^{-1}t^n)^{dm}+\dots= u^{-dm}(t^n+h'_{dm-1}t^{(dm-1)n}+\dots+h'_0)$ so
$$gt \text{ mod } h(t)=\left(\sum_{i=0}^{dmn-1}g_{i-1}t^i\right)-g_{dmn-1}u^{dm}\left(\sum_{j=0}^{dm-1}h'_jt^{nj}\right).$$ As $g\in I_l(\mathcal{M})$, this implies $gt \text{ mod } h\in \mathcal{M}$, so for all $i\in\lbrace lm+1,\dots, dmn-1\rbrace$, we have
\begin{equation}\label{eqn:nucleus g_i}
g_{i-1}=\begin{cases}
0& \text{for } i\not\equiv 0\text{ mod }n\\
g_{dmn-1}u^{dm}h'_{i/n}& \text{for } i\equiv 0\text{ mod }n
\end{cases}
\end{equation}
where $h'_{i/n}=0$ if $i/n$ is not an integer. We will show that $g_{dmn-1}=0$ and thus $\text{deg}(g)\leq lm-1$. As $lm>2$, this follows verbatim from \cite[Theorem 9]{Sheekey}.

The same holds for $I_r(\mathcal{M})$ following Sheekey's proof with the appropriate amendments made for $D[t;\sigma]$. The results for ${\rm Cent}(\mathcal{M})$ and $Z(\mathcal{M})$ hold verbatim from \cite[Theorem 9]{Sheekey}.
\end{proof}

\begin{corollary}\label{cor:1}
Let $R=D[t;\sigma]$ and $\text{deg}(h)=dmn$. Suppose $n>1$, $m>2$ and $S=S_{n,m,1}(\nu,\rho,f)$ with $\nu\neq 0$ be a division algebra. Then
\\ (i) ${\rm Nuc}_l(S)\cong \lbrace g_0\in D \,|\, g_0\nu=\nu\rho( g_0)\rbrace\subset D$, so in particular
${\rm Nuc}_l(S)= {\rm Fix}(\rho)\subset D$, if $\nu\in C$.
\\ (ii) ${\rm Nuc}_m(S)\cong {\rm Fix}(\rho^{-1}\circ\sigma^{m})\subset D$.
\\ (iii) $ C(S)= {\rm Fix}(\rho)\cap F=F'$.
\\ (iv) ${\rm dim}_{F'}{\rm Nuc}_r(S)={\rm dim}_{F'}(E_{\hat{h}})={\rm deg}(\hat{h})[F:F']=[F:F']dm$.
\end{corollary}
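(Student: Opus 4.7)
The plan is to reduce the corollary to Theorem \ref{theorem 9I} applied with $l=1$, combined with Theorem \ref{Nucleus of Sheekey algebras}, which translates each of the four nuclei into idealiser or centraliser data for the spread set $\mathcal{C}\subset M_{dn}(E_{\hat{h}})$. First I would verify the hypotheses of Theorem \ref{theorem 9I} for $l=1$: the condition $l\leq dn/2$ reduces to $dn\geq 2$, which holds because $n>1$; the remaining hypotheses $n>1$, $lm=m>2$, and $\nu\neq 0$ are assumed in the statement of the corollary. Since $S$ is hypothesised to be a unital division algebra, Theorem \ref{Nucleus of Sheekey algebras} applies in full.

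For (i), the dictionary ${\rm Nuc}_l(S)\cong I_l(\mathcal{C})$ combined with Theorem \ref{theorem 9I}(i) yields ${\rm Nuc}_l(S)\cong\{g_0\in D\mid g_0\nu=\nu\rho(g_0)\}$. When $\nu\in C$, $\nu$ is central in $D$ and commutes with every $g_0$, so the defining equation collapses to $g_0=\rho(g_0)$, identifying ${\rm Nuc}_l(S)$ with ${\rm Fix}(\rho)\subset D$.

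Parts (ii) and (iii) are immediate from the dictionary: ${\rm Nuc}_m(S)\cong I_r(\mathcal{C})\cong {\rm Fix}(\rho^{-1}\circ\sigma^{m})$ by Theorem \ref{theorem 9I}(ii) with $lm=m$; and $C(S)\cong I_l(\mathcal{C})\cap C(\mathcal{C})=Z(\mathcal{C})\cong F'$ by Theorem \ref{theorem 9I}(iii), using the definition $F'={\rm Fix}(\rho)\cap F$.

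Part (iv) is the step that needs the most care, and is the main obstacle. Via Theorem \ref{Nucleus of Sheekey algebras}, ${\rm Nuc}_r(S)\cong C(\mathcal{C}^*)$, where $\mathcal{C}^*$ is the spread set of the opposite algebra $S^{op}$. The calculation of $C(\mathcal{C})\cong E_{\hat{h}}$ in Theorem \ref{theorem 9I}(iii) is symmetric under passing to the opposite algebra, so it should apply verbatim to $\mathcal{C}^*$ and yield an $F'$-algebra isomorphism $C(\mathcal{C}^*)\cong E_{\hat{h}}$; verifying this symmetry is the delicate point, since Theorem \ref{theorem 9I} only records the result for $C(\mathcal{C})$. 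Once this is in place, using that $\deg(h)=dmn$ forces $\deg(\hat{h})=dm$ via Theorem \ref{thm:main2}, we obtain $\dim_{F'}{\rm Nuc}_r(S)=\dim_{F'}E_{\hat{h}}=\deg(\hat{h})[F:F']=dm[F:F']$, as claimed.
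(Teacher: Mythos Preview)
Your approach is exactly the one the paper intends: the corollary is stated without proof because it is meant to follow immediately from Theorem \ref{theorem 9I} (with $l=1$) via the dictionary of Theorem \ref{Nucleus of Sheekey algebras}, and your verification of the hypotheses and the translation for parts (i)--(iii) is correct.

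One correction: you write that ``$S$ is hypothesised to be a unital division algebra'', but in fact $(R_m,\circ)$ with $\nu\neq 0$ is explicitly introduced in the paper as a \emph{non-unital} ring; the corollary only assumes it is a division algebra. The paper nonetheless applies Theorem \ref{Nucleus of Sheekey algebras} to these algebras (see the sentence immediately following that theorem), so the unitality hypothesis in Theorem \ref{Nucleus of Sheekey algebras} is being used loosely---the underlying argument from \cite{Sheekey} works for division algebras whose spread set contains the identity map up to the appropriate identifications, which is the case here. Your flag on part (iv) is fair: the paper, like \cite{Sheekey}, does not spell out the passage from $C(\mathcal{C})$ to $C(\mathcal{C}^*)$, and the claim is only a dimension equality, which is what the symmetry argument delivers.
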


\begin{theorem}\label{theorem 9II}
Let $R=K[t;\sigma]$ and ${\rm deg}(h)=mn$. Suppose $l\leq n/2$, $n>1$ and $lm>2$. Let $S=S_{n,m,l}(\nu,\rho,f)$ with $\nu\neq 0$ and $\mathcal{M}$ be the image of $S$ in  ${\rm End}_{E_f}(R/Rf)$, so that the corresponding rank metric code lies in $M_n(E_{\hat{h}})$. Then
\\ (i) $I_l(\mathcal{M})\cong {\rm Fix}(\rho)\subset K$, $I_r(\mathcal{M})\cong {\rm Fix}(\rho^{-1}\circ\sigma^{lm})\subset K$,
\\ (ii) ${\rm Cent}(\mathcal{M})\cong E_{\hat{h}}$, $Z(\mathcal{M})\cong F'$.
\\
If $\nu=0$, we have
\\ (iii) $I_l(\mathcal{M})\cong K$, $I_r(\mathcal{M})\cong K$,
\\ (iv) ${\rm Cent}(\mathcal{M})\cong E_{\hat{h}}$, $Z(\mathcal{M})\cong F$.
\end{theorem}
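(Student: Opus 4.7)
The strategy is to specialise the proof of Theorem \ref{theorem 9I} to the commutative case $D = K$ (so $d=1$), where the condition ``$g_0 \nu = \nu \rho(g_0)$'' simplifies to ``$\rho(g_0) = g_0$''. I would identify $\mathcal{C}$ with its preimage in $R/Rh$ and represent each element of $I_l(\mathcal{C})$, $I_r(\mathcal{C})$ or $C(\mathcal{C})$ by a polynomial $g \in R$ of degree strictly less than $\deg(h) = mn$, then use carefully chosen test elements $a \in A$ to extract linear constraints on the coefficients $g_i$ of $g$.

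For claim (i) with $\nu \neq 0$, first restrict to representatives $g = \sum_{i=0}^{lm} g_i t^i$ of degree $\leq lm$. Computing $g \cdot (1 + \nu t^{lm}) \bmod h$ and using $l \leq n/2$ so that intermediate degrees stay below $mn$, one can read off the coefficients directly; for the result to lie in $A$, the coefficients of $t^{lm+j}$ for $1 \leq j \leq lm-1$, which are $g_j \sigma^j(\nu)$, must vanish (forcing $g_j = 0$ since $\nu \neq 0$), together with the relation $g_0 \nu + g_{lm} = \nu \rho(g_0)$. Testing next with $g \cdot t \in A$ produces a coefficient $g_{lm}$ in degree $lm+1$ that must vanish, hence $g_{lm} = 0$ and $\rho(g_0) = g_0$. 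The symmetric scheme applied on the right, to $ag$ for $a = 1 + \nu t^{lm}$ and $a = t$, gives $g = g_0$ with $\nu \sigma^{lm}(g_0) = \nu \rho(g_0)$, i.e.\ $g_0 \in {\rm Fix}(\rho^{-1} \circ \sigma^{lm})$. To exclude representatives of degree exceeding $lm$, I invoke the recursion on the coefficients of $g$ coming from Equation (\ref{eqn:nucleus g_i}) in the proof of Theorem \ref{theorem 9I}; that recursion uses only the shape $h(t) = \hat{h}(u^{-1}t^n)$ (with $u = 1$ here) and works verbatim when $D = K$, forcing $g_{mn-1} = 0$ and then propagating to all $g_i$ with $i > lm$.

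For claim (ii) with $\nu \neq 0$, since $R/Rh \cong M_n(E_{\hat{h}})$ and $\mathcal{C}$ contains $L_t, L_{t^2},\ldots$ which together generate this matrix algebra, the centraliser of $\mathcal{C}$ inside ${\rm End}_{E_f}(V_f)$ equals the centre of $M_n(E_{\hat{h}})$, namely $E_{\hat{h}}$. Then $Z(\mathcal{C}) = I_l(\mathcal{C}) \cap C(\mathcal{C})$ is ${\rm Fix}(\rho) \cap E_{\hat{h}}$; viewing both inside $R/Rh$ one has $K \cap E_{\hat{h}} = F$, so the intersection collapses to $F \cap {\rm Fix}(\rho) = F'$. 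For (iii)--(iv) with $\nu = 0$, the algebra $S = S_f$ is a unital Petit algebra and the constraint $g_0 \nu = \nu \rho(g_0)$ is vacuous; every $g_0 \in K$ qualifies for both idealisers, the high-degree exclusion argument still applies, and one obtains $I_l(\mathcal{C}) \cong I_r(\mathcal{C}) \cong K$. The centraliser analysis is unchanged, giving $C(\mathcal{C}) \cong E_{\hat{h}}$, while the centre becomes $K \cap E_{\hat{h}} = F$ since no $\rho$-fixation is imposed.

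The main obstacle is the degree bookkeeping modulo $h$: one must verify that all reductions stay strictly below the boundary $mn$ when $l \leq n/2$ and $lm > 2$, so that coefficients of $g \cdot a$ and $a \cdot g$ can be read off without modular reduction, and that the borderline case $2lm = mn$ (which occurs when $l = n/2$) still yields the correct constraints after one genuine reduction by $h$. The exclusion of the $lm = 2$ regime parallels the same obstruction noted for Theorem \ref{theorem 9I}: in that regime there are too few independent degree-equations to force $g_{mn-1} = 0$ and run the recursion, and a separate analysis (as the authors remark, available only over $\mathbb{R}$) would be required.
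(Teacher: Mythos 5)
Your proposal is correct and follows essentially the same route as the paper, which itself just defers to Sheekey's Theorem 9 and to the proof of Theorem \ref{theorem 9I}: restrict to representatives of degree at most $lm$, test against $1+\nu t^{lm}$ and $t$ to force $g_1=\dots=g_{lm}=0$ and the condition $\nu\rho(g_0)=g_0\nu$ (which in the commutative case $D=K$ collapses to $g_0\in{\rm Fix}(\rho)$, resp.\ $g_0\in{\rm Fix}(\rho^{-1}\circ\sigma^{lm})$ on the right), and then exclude higher-degree representatives via the recursion of Equation (\ref{eqn:nucleus g_i}) using $lm>2$. Your double-centraliser shortcut for $C(\mathcal{C})$ (noting that the $F$-algebra generated by $A+Rh$ is all of $R/Rh\cong M_n(E_{\hat h})$) is a legitimate and slightly cleaner substitute for the coefficient computation, and your identification of the delicate points (the borderline case $2lm=mn$ and the excluded regime $lm=2$) matches exactly what the paper flags.
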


Again, the proof is analogous to the one of \cite{Sheekey}, Theorem 9 (it does not use the fact that for finite fields the right nucleus of $S_f$ is $E_{\hat{h}}$, it only uses that $R/Rh$ has center $E_{\hat{h}}$).

\begin{corollary}\label{cor:nuclei K}
Let $R=K[t;\sigma]$ and ${\rm deg}(h)=mn$. Suppose that $n>1$, $m>2$ and that $S=(S(\nu,\rho,f),\circ)$ is a division algebra  with $\nu\neq 0$. Then
\\ (i) ${\rm Nuc}_l(S)= {\rm Fix}(\rho)\subset K$,
\\ (ii) ${\rm Nuc}_m(S)= {\rm Fix}(\rho^{-1}\circ\sigma^m)\subset K$,
\\ (iii) $ C(S)=  {\rm Fix}(\rho)\cap F=F'$.
\\ (iv) ${\rm  dim}_{F'}{\rm Nuc}_r(S)={\rm  dim}_{F'}(E_{\hat{h}})={\rm deg}(\hat{h})[F:F']=[F:F']m$.
\end{corollary}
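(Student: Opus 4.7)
The plan is to derive Corollary \ref{cor:nuclei K} as a direct specialization of Theorems \ref{Nucleus of Sheekey algebras} and \ref{theorem 9II}, the only nontrivial check being that the hypotheses of both theorems are met in the $l=1$ setting and that the two isomorphisms line up correctly.

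First, I would verify the hypotheses. By assumption $S=S(\nu,\rho,f)$ is a unital division algebra (it is unital because $1\in K\subset R_m$ acts as identity under $\circ$ whenever $\nu\cdot \rho(1)\cdot t^m$ does not alter the multiplication of $1$, which one checks directly), so Theorem \ref{Nucleus of Sheekey algebras} applies and gives
\[
{\rm Nuc}_l(S)\cong I_l(\mathcal{C}),\quad {\rm Nuc}_m(S)\cong I_r(\mathcal{C}),\quad {\rm Nuc}_r(S)\cong C(\mathcal{C}^*),\quad C(S)\cong I_l(\mathcal{C})\cap C(\mathcal{C}),
\]
where $\mathcal{C}$ is the spread set of $S$ inside ${\rm End}_{E_f}(V_f)$. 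Next, Theorem \ref{theorem 9II} was stated under the conditions $l\le n/2$, $n>1$, $lm>2$ and $\nu\ne 0$; with $l=1$ these reduce to $n>1$, $m>2$ and $\nu\ne 0$, which are exactly the present hypotheses.

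Given these, Theorem \ref{theorem 9II} yields $I_l(\mathcal{C})\cong {\rm Fix}(\rho)\subset K$ and $I_r(\mathcal{C})\cong {\rm Fix}(\rho^{-1}\circ\sigma^{m})\subset K$, establishing (i) and (ii). For (iii), Theorem \ref{theorem 9II}(ii) identifies $Z(\mathcal{C})\cong F'$; since $Z(\mathcal{C})=I_l(\mathcal{C})\cap C(\mathcal{C})$, the isomorphism $C(S)\cong Z(\mathcal{C})\cong F'={\rm Fix}(\rho)\cap F$ follows.

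For (iv), the same argument applied to $\mathcal{C}^*$ (the spread set of $S^{\mathrm{op}}$, which is computed in an entirely analogous way because the centraliser of a matrix algebra and its opposite have the same structure) gives $C(\mathcal{C}^*)\cong E_{\hat{h}}$, so ${\rm Nuc}_r(S)\cong E_{\hat{h}}$. The dimension is then obtained from Theorem \ref{thm:main2}: since $D=K$ has $d=1$ and $\deg(h)=mn=km$, we get $k=n$, $s=dn/k=1$, hence $\deg(\hat{h})=dm/s=m$ and $[E_{\hat{h}}:F]=m$. Therefore $\dim_{F'}{\rm Nuc}_r(S)=[E_{\hat{h}}:F][F:F']=m[F:F']$, which is exactly the claimed formula.

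Since every step reduces to invoking or specializing an already-established result, the only real obstacle is checking that the $l=1$ instance of Theorem \ref{theorem 9II} genuinely applies (which it does under $m>2$ and $n>1$), and that $Z(\mathcal{C})$ in that theorem corresponds to $I_l(\mathcal{C})\cap C(\mathcal{C})$ appearing in Theorem \ref{Nucleus of Sheekey algebras} so that the identification of $C(S)$ with $F'$ is compatible. No further computation is required.
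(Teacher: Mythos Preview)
Your approach is essentially the one the paper takes: the corollary is recorded immediately after Theorem \ref{theorem 9II} without a separate proof, being the $l=1$ specialization combined with Theorem \ref{Nucleus of Sheekey algebras}, and your verification of the hypotheses and the dimension count via Theorem \ref{thm:main2} is exactly what is intended.

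There is, however, one genuine slip. You assert that $S$ is unital, with the parenthetical justification that $1\in K$ acts as identity ``whenever $\nu\rho(1)t^m$ does not alter the multiplication of $1$''. This is false when $\nu\neq 0$: for $b\in R_m$ with $b_0\neq 0$ one has $b\circ 1=(b+\nu\rho(b_0)t^m)\ {\rm mod}_r f\neq b$, and likewise $1\circ c=(1+\nu t^m)c\ {\rm mod}_r f\neq c$ in general. The paper itself calls $(R_m,\circ)$ a \emph{non-unital} nonassociative ring in Section \ref{sec:division}, and explicitly says that the idealiser/centraliser results ``can now be applied to determine the nuclei and center of the non-unital algebras $S$'', so the bridge between Theorem \ref{Nucleus of Sheekey algebras} (stated for unital division algebras) and the non-unital $S$ is being used as in \cite{Sheekey} rather than via any unitality of $S$. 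You should drop the unitality claim and instead appeal to that extension directly; the rest of your argument then goes through unchanged.
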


Theorems \ref{theorem 9I}, \ref{theorem 9II} and Corollaries \ref{cor:1}, \ref{cor:nuclei K} generalize \cite[Theorem 9, Corollary 1]{Sheekey}\label{thm: nuclei} which were proved for semifields.

%
%

\section{Examples of division algebras and an MRD code when $f(t)=t^n-\theta\in K[t;\sigma]$}

\subsection{$K=F(\theta)$}
Let $K=F(\theta)$ be an extension of prime degree $n$. Let $f(t)=t^n-\theta \in K[t;\sigma]$.
We now compute the rank metric code associated to the $F'$-algebra $S_{n,n,1}(\nu,\rho,f)$. Note that
 $f(t) = t^3-\theta \in K[t;\sigma]$ is irreducible if and only if
$\theta\not=\sigma^{2}(z)\sigma(z)z$
for all $z\in K$. If
 $F$ contains a primitive $n$th root of unity, then $f(t)$ is irreducible if and only if
$\theta\not=\sigma^{n-1}(z)\cdots\sigma(z)z$
for all $z\in K$.

We assume that $f$ is irreducible. Define
$h(t)=(t^n-\theta)(t^n-\sigma(\theta))\cdots(t^n-\sigma^{n-1}(\theta))=(t^n)^n+\dots+(-1)^nN_{K/F}(\theta),$
then $h(t)={\rm mclm}(f)$:
as $t^n-\sigma^i(\theta)\in K[t^n]$, the factors of $h(t)$ all commute and $h(t)\in K[t^n]$.
Since
$\sigma(h(t))=(t^n-\sigma(\theta))\cdots(t^n-\sigma^{n-1}(\theta))(t^n-\theta)=h(t),$
  we know that $h(t)\in {\rm Fix}(\sigma)[t]=F[t]$ so $h(t)\in F[t]\cap K[t^n]=F[t^n]=C(R)$. Hence
$h(t)=\hat{h}(t^n)$ with $\hat{h}(x)=x^n+(-1)^nN_{K/F}(\theta)\in F[x]$. Thus
 $f$ divides $h$ both from the left and the right by Lemma \ref{right divisors are left divisors}.

 As $n$ is prime, the minimal central left multiple of $f$ must have degree $n$ in $F[x]$ by Theorem \ref{thm:main2}; thus $h(t)={\rm mclm}(f)$ and hence $\hat{h}(x)=x^n+(-1)^nN_{K/F}(\theta)$ also is an irreducible polynomial in $F[x]$. As a field,
$E_f=\lbrace z+Rf \,|\, z\in F[t^n]\rbrace$ is generated by
$\lbrace 1+Rf,t^n+Rf,t^{2n}+Rf,\dots,t^{n(n-1)}+Rf\rbrace=\lbrace 1+Rf,\theta+Rf,\theta^2+Rf,\dots,\theta^{n-1}+Rf\rbrace$
over $F$.
As $K$ is generated by $\lbrace 1,\theta,\dots, \theta^{n-1}\rbrace$, there is a canonical isomorphism $E_f\longrightarrow K$, $x+Rf\mapsto x.$

It is clear that $\lbrace 1+Rf,t+Rf,\dots,t^{n-1}+Rf\rbrace$ is an $E_f$-basis for $R/Rf$. Let $a=a_0+a_1t+\dots+a_{n-1}t^{n-1}+\nu\rho(a_0)t^n\in S(\nu,\rho,h)$. In order to determine $M_a$, we consider how $L_{a_it^i}$ acts on the basis elements of  $R/Rf$. As left multiplication is distributive, i.e. $L_{a+b}(x)=L_a(x)+L_b(x)$, it follows that
$L_a=\sum_{i=0}^n L_{a_it^i},$ where $a_n=\nu\rho(a_0)$.
For each $i$ we have:
\begin{align*}
L_{a_it^i}(1+Rf)=&a_it^i+Rf=(t^i+Rf)(\sigma^{n-i}(a_i)+Rf)\\
L_{a_it^i}(t+Rf)=&a_it^{i+1}+Rf=(t^{i+1}+Rf)(\sigma^{n-i-1}(a_i)+Rf)\\
\vdots & \qquad \qquad \qquad \qquad \vdots\\
L_{a_it^i}(t^{n-i}+Rf)=&a_it^n+Rf=a_i\theta+Rf=(1+Rf)(a_i\theta+Rf)\\
L_{a_it^i}(t^{n-i+1}+Rf)=&a_it^{n+1}+Rf=a_ix\theta+Rf=(t+Rf)(\sigma(a_i)\theta+Rf)\\
\vdots & \qquad \qquad \qquad \qquad \vdots\\
L_{a_it^i}(t^{n-1}+Rf)=&a_it^{i-1}\theta+Rf=(t^{i-1}+Rf)(\sigma^{n-i+1}(a_i)\theta+Rf).
\end{align*}
Thus the matrix representing $L_{a_it^i}$ is given by
\begin{equation*}M_{a_it^i}= \left( \begin{array}{ccccccccc}
  0                                  &         0                                         &         \cdots      &  0 &     \sigma^{n-i}(a_i)&    0                                    & \cdots       &       0\\
  0                                  &         0                                         &         \cdots      &        0 &0                               &       \sigma^{n-(i+1)}(a_i)   & \cdots      &      0\\
  \vdots                     &                                            &    \ddots         &  &                          &                                            &     \ddots       &       \vdots\\
          0                                   &                                                  &                                      &                \ddots                           &                                                   &                              & & \sigma(a_{m})\\
 a_i\theta                  &                                         &           &               &                       &       \ddots                          &                     & 0\\
          0                                 &         \sigma^{n-1}(a_i)\theta   &                   &           &                    &                                            &                    &  \\
  \vdots                     &                                            &    \ddots       &                 &                         &                                         & \ddots    &    \vdots\\
 0                                        &        0                                        &        \cdots     & \sigma^{n-(i-1)}(a_i)\theta&  0 & 0                         & \cdots    & 0\\
 \end{array} \right).
 \end{equation*}
As $M_a=\sum_{i=0}^nM_{a_it^i}$, we obtain $M_a=(m_{i,j})_{i,j}$ where
\begin{equation*} m_{i,j}=\begin{cases}
\sigma^{n+1-i}(a_0)+\sigma^{n+1-i}(\nu\rho(a_0))\theta &\mbox{for } i=j,\\
\sigma^{n+1-i}(a_{i-j}) &\mbox{for } i>j,\\
\sigma^{n+1-i}(a_{n+i-j})\theta &\mbox{for } i<j.
\end{cases}
\end{equation*}
This yields $\mathcal{C}_{n,n,1}=\lbrace M_a\mid a_k\in K \mbox{ for } k=0,1,\dots,n-1\rbrace\subset M_n(K)$ as the matrix spread set of the $n^2[F:F']$-dimensional $F'$-algebra $S_{n,n,1}(\nu,\rho,f)$.
\\
The algebra associated to this spread set is a division algebra if
$N_{K/F'}(\theta) N_{K/F'}(\nu)\neq 1$ (Theorem \ref{thm: divison algebra K}). In that case the spread set will be an MRD code.
In particular, for $\nu=0$ this condition
 is satisfied for any irreducible $f(t)=t^n-\theta$.
This is the well known result that for  irreducible $f$ the Petit algebra $S_f$ is a division algebra and so are all its isotopes.
 For $n>2$ and $\nu\neq 0$, Corollary \ref{cor:nuclei K} yields
 $$\mathrm{Nuc}_l(S)=\mathrm{Nuc}_m(S)= \mathrm{Fix}(\rho)\subset K,\quad C(S)=F',\quad \mathrm{dim}_{F'}\mathrm{Nuc}_r(S)=[F:F']m.$$

\subsection{Real division algebras of dimension 4}

Over a finite field $F$, all division algebras of dimension 4 over $F$ which have $F$ as their center and a nucleus of dimension 2 over $F$, can be constructed as algebras $S_{n,m,1}(\nu,\rho, f)$ for suitable parameters \cite{Sheekey}. Let us now look at some real division algebras we obtain with our construction. If $\nu=0$, then any choice of an irreducible $f\in \mathbb{C}[t;\can]$
 will yield an algebra isotopic to a real Petit division algebra. If $\nu\neq 0$, any choice of irreducible $f\in \mathbb{C}[t;\can]$ where
  $N_{\mathbb{C}/\mathbb{R}}(a_0)\not = 1/N_{\mathbb{C}/\mathbb{R}}(\nu)$ also yields a division algebra (Theorem \ref{thm: divison algebra K}).

Let $b\in\mathbb{R}$ and $f(t)=t^2-bi\in \mathbb{C}[t;\overline{\phantom{x}}]$.  Then $h(t)=\hat{h}(t^2)$,  $\hat{h}(x)=x^2+b^2\in \mathbb{R}[x]$, is the minimal central left multiple of $f$, as $h(t)=t^4+b^2=(t^2+bi)(t^2-bi).$ For all $b>0$, $f(t)=t^2-bi$ is irreducible in $\mathbb{C}[t;\overline{\phantom{x}}]$.

For every irreducible $f(t)=t^2-bi$, and $\nu\in\mathbb{C}$ such that
$N_{\mathbb{C}/\mathbb{R}}(\nu)\neq \frac{1}{b^2},$ we  obtain a four-dimensional real division algebra $S_{2,2,1}(\nu,\rho,f)$ and an MRD code given by its matrix spread set
$$\mathcal{C}_{2,2,1}=\Big\{ \begin{pmatrix}
z_0+\nu\rho(z_0)bi& z_1bi\\
\overline{z_1}& \overline{z_0}+\overline{\nu\rho(z_0)}bi
\end{pmatrix}
\mid z_0,z_1\in \mathbb{C}
\Big\},$$
where $\rho$ is either the identity or the complex conjugation.

As mentioned in Theorem \ref{thm: nuclei}, \cite[Theorem 9]{Sheekey} uses results to deal with the case when $lm=2$ that are  valid over finite fields, but can be extended to $R=\mathbb{C}[t;\can]$, for instance for $f(t)=t^2-i$:

 \begin{theorem}
Let $f(t)=t^2-i\in \mathbb{C}[t;\can]$. Suppose $S=S_{2,2,1}(\nu,\rho,f)$ is a division algebra for some $\nu\neq 0$ and $\rho\in {\rm Aut}_{\mathbb{R}}(\mathbb{C})$. Then
\\ (i) ${\rm Nuc}_l(S)={\rm Nuc}_m(S)={\rm Fix}(\rho)$,
\\ (ii) $ C(S)=\mathbb{R}$,
\\ (iii)  ${\rm dim}_{\mathbb{R}}({\rm Nuc}_r(S))={\rm dim}_{\mathbb{R}}(\mathbb{R}[t^2])=2$.
\end{theorem}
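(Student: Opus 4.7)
The plan is to apply Theorem \ref{Nucleus of Sheekey algebras} and translate everything to statements about the explicit spread set $\mathcal{C}=\mathcal{C}(S)\subseteq M_2(\mathbb{C})$ worked out in Section 8.1, where $h(t)=t^4+1$ and $E_{\hat h}=\mathbb{R}[t^2]/(t^4+1)\cong\mathbb{C}$. By that theorem, $\text{Nuc}_l(S)\cong I_l(\mathcal{C})$, $\text{Nuc}_m(S)\cong I_r(\mathcal{C})$, $Z(S)\cong I_l(\mathcal{C})\cap C(\mathcal{C})$, and $\text{Nuc}_r(S)\cong C(\mathcal{C}^*)$. The portions of the proof of \cite[Theorem 9]{Sheekey} that establish $C(\mathcal{C})\cong E_{\hat h}$ (and, by the dual argument, $C(\mathcal{C}^*)\cong E_{\hat h}$) do not invoke the hypothesis $lm>2$ and transfer verbatim. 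Once part (i) is in hand, parts (ii) and (iii) follow immediately: for (ii), the intersection $\text{Fix}(\rho)\cap E_{\hat h}$ computed inside $R/Rh$ collapses to $\mathbb{R}$, because a constant in $\mathbb{C}$ lying in the span of $\{1+Rh,\,t^2+Rh\}$ is forced to have vanishing $t^2$-coefficient; and for (iii), $\dim_{\mathbb{R}}\text{Nuc}_r(S)=\dim_{\mathbb{R}}E_{\hat h}=2$.

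The substance therefore lies entirely in part (i): proving $I_l(\mathcal{C})=I_r(\mathcal{C})=\text{Fix}(\rho)$. I would first record that since $S$ is a division algebra, Theorem \ref{thm: divison algebra K} forces $N_{\mathbb{C}/\mathbb{R}}(i)\cdot N_{\mathbb{C}/\mathbb{R}}(\nu)=|\nu|^2\neq 1$. The containment $\text{Fix}(\rho)\subseteq I_l(\mathcal{C})$ is immediate by direct verification on the defining form of $A$. For the converse, lift $g\in I_l(\mathcal{C})$ to $g=g_0+g_1t+g_2t^2+g_3t^3$ in the natural representatives of $R/Rh$, and test against the two elements $a=t$ and $a=1+\nu t^2$ of $A$. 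Reducing $g\cdot t\bmod h$ via $t^4\equiv -1$ produces $-g_3+g_0t+g_1t^2+g_2t^3$; forcing this into $A$ yields $g_2=0$ and $g_1=-\nu\rho(g_3)$. The cubic coefficient of $g(1+\nu t^2)\bmod h$ is $g_3+g_1\bar\nu$, which combined with the previous relation produces the identity $g_3=|\nu|^2\rho(g_3)$. Since $\rho\in\{\mathrm{id},\can\}$ and $|\nu|^2\neq 1$, direct cancellation for $\rho=\mathrm{id}$, or passage to absolute values for $\rho=\can$ (giving $|g_3|=|\nu|^2|g_3|$), forces $g_3=0$ and hence $g_1=0$. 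The surviving quadratic-coefficient constraint then reads $\nu g_0=\nu\rho(g_0)$, so $g_0\in\text{Fix}(\rho)$. The argument for $I_r(\mathcal{C})=\text{Fix}(\rho^{-1}\circ\sigma^2)=\text{Fix}(\rho)$ is formally dual: multiply the same two test elements on the right, use $\sigma^2=\mathrm{id}$ (so that $t^2$ commutes with all of $\mathbb{C}$) when reducing $a\cdot g\bmod h$, and deploy $|\nu|\neq 1$ in the same way.

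The main obstacle is the very reason \cite[Theorem 9]{Sheekey} excludes $lm=2$: Sheekey's argument eliminates the high-degree coefficients of $g$ by iterating the reduction of $gt\bmod h$ through many degrees, which requires $\deg h$ to exceed $lm$ by a wide margin. With $lm=2$ and $\deg h=4$ no such cascade is available. The proposal here replaces the cascade by exploiting the highly concrete form $h=t^4+1$ together with the strict arithmetic inequality $|\nu|\neq 1$ forced by the division algebra hypothesis; these two ingredients allow two well-chosen test elements of $A$ to accomplish in one step what Sheekey's cascade accomplishes incrementally.
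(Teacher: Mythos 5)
Your proposal is correct and follows essentially the same route as the paper: translate the nuclei into idealiser/centraliser statements via Theorem \ref{Nucleus of Sheekey algebras}, then kill the degree-$1$ and degree-$3$ coefficients of a lifted idealiser element by reducing products modulo $h=t^4+1$ and invoking $N_{\mathbb{C}/\mathbb{R}}(\nu)\neq 1$; your relations $g_2=0$, $g_3=-g_1\bar\nu$, $g_1=-\nu\rho(g_3)$ and the conclusion $\nu\rho(g_0)=g_0\nu$ match the paper's ``direct and laborious computation.'' The only (harmless) difference is that you extract these constraints from the two test elements $t$ and $1+\nu t^2$ rather than from a general element of $A$, which shortens the computation without changing the argument.
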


\begin{proof}
 We have $h(t)=t^4+1\in\mathbb{R}[t^2]$. Suppose $g+Rh\in I_l(\mathcal{M})$ for some $g(t)=g_0+g_1t+g_2t^2+g_3t^3\in R$. Then $ga\in S(\nu,\rho,h)$ for all $a\in S$. Direct and laborious computation yields $g_2=0$, $g_3=-g_1\overline{\nu}$, and $\nu \rho(g_0a_0+g_1\overline{\nu}\overline{a_1})=g_0\nu\rho(a_0)+g_1\overline{a_1}.$ This is satisfied for all $a_0,a_1\in\mathbb{C}$ if and only if $\nu\rho(g_0)=g_0\nu$ and $g_1=\nu\rho(g_1\overline{\nu}).$

 If $\rho=id$, it follows that either $g_1=0$ or $N_{\mathbb{C}/\mathbb{R}}(\nu)=1$; as $S$ is a division algebra, Theorem \ref{Sheekey Theorem 5} (or \cite[Theorem 4]{Sheekey}) forces $g_1=0$ and so $g=g_0$ for some $g_0\in \mathbb{C}$. Thus $I_l(\mathcal{M})=\mathbb{C}$.\\
If $\rho=\can$, then $g_1=\nu^2g_1$ so either $g_1=0$ or $\nu=\pm 1$. As $N_{\mathbb{C}/\mathbb{R}}(\nu)\neq 1$ \cite[Theorem 4]{Sheekey}, this forces $g_1=0$ so $g=g_0$ for some $g_0\in\mathbb{R}$. In this case, $I_l(\mathcal{M})=\mathbb{R}$.

The computations for $I_r(\mathcal{M})$ follow analogously and ${\rm Cent}(\mathcal{M})$ and $Z(\mathcal{M})$ follow from the proof of \cite[Theorem 4]{Sheekey}. We obtain the final result on the nuclei using Theorem \ref{Nucleus of Sheekey algebras} to relate the idealisers and centraliser of $\mathcal{M}$ to the nuclei of the algebra $S$.
\end{proof}

\begin{example}
If $f(t)=t^2-i$ we obtain division algebras $S$ for all $\nu\in\mathbb{C}$ such that $N_{\mathbb{C}/\mathbb{R}}(\nu)\neq 1.$
 If $\nu\neq 0$ and $S$ is a division algebra, then  $ C(S)=  \mathbb{R}$ and
 ${\rm dim}_\mathbb{R}{\rm Nuc}_r(S)= 2.$ Since therefore ${\rm Nuc}_r(S)$ is a two-dimensional division algebra over $\mathbb{R}$,
  ${\rm Nuc}_r(S)$ is an Albert isotope of $\mathbb{C}$ and can be found in the classification in \cite[Theorem 1]{HP}: it must be $ \mathbb{C}$,
 $ \mathbb{C}^{(\can,\can)}$, $ \mathbb{C}^{(1+L(v)\can ,\can)}$, $ \mathbb{C}^{(\can , 1+L(v)\can)}$, or
 $ \mathbb{C}^{(1+L(v)\can ,1+L(w)\can)}$, with $u,v\in \mathbb{C}$ suitably chosen.

 If additionally $\rho=id$, then  ${\rm Nuc}_l(S) ={\rm Nuc}_m(S)= \mathbb{C},$ and  if $\rho=\can$ then
  ${\rm Nuc}_l(S)={\rm Nuc}_m(S)=  \mathbb{R}.$
 Note that the four-dimensional algebras in the first class are all isotopes of nonassociative quaternion algebras.

\end{example}

\section{Constructing algebras and codes using irreducible $f\in R=D[t;\delta]$}\label{sec:elements in N_general}

We now consider the same construction using differential polynomial rings.
Let $C$ a field  of characteristic $p$ and $D$ be a finite-dimensional division algebra with center $C$.
Let $R=D[t;\delta]$, where  $\delta$ is a derivation of $D$, such that $\delta|_C$ is algebraic with minimum polynomial
$g(t)=t^{p^e}+c_1t^{p^{e-1}}+\dots+ c_et\in F[t]$
 of degree $p^e$, with $F=C\cap{\rm Const}(\delta)$. (This includes the special case where $d=1$, i.e. $R=K[t;\delta]$, and $\delta$ is an algebraic derivation with minimum polynomial $g$.)
 Then $g(\delta)=id_{d_0}$ is an inner derivation of $D$.
W.l.o.g. we choose $d_0\in F$, so that $\delta(d_0)=0$ \cite[Lemma 1.5.3]{J96}. Then
$C(D[t;\delta]) = F[x]=\{\sum_{i=0}^{k}a_i(g(t)-d_0)^i\,|\, a_i\in F \}$
 with $x=g(t)-d_0$.
The two-sided $f\in R$ are of the form $f(t)=uc(t)$ with $u\in D$
and $c(t)\in C(R)$ \cite[Theorem 1.1.32]{J96}. All polynomials $f\in R$ are bounded.

For every $f \in R$, the \emph{minimal central left multiple of $f$ in $R$} is the unique  polynomial of minimal degree $h \in C(R)=F[x]$ such that $h = gf$ for some $g \in R$, and such that $h(t)=\hat{h}(g(t)-d_0)$ for some monic $\hat{h}(x) \in F[x]$.
 The bound $f^*$ of $f$ is the unique minimal central left multiple of $f$ up to some scalar.

 From now on let $f \in R=D[t;\delta]$ be a monic irreducible polynomial of degree $m>1$ and let $h(t)=\hat{h}(g(t)-d_0)$ be its minimal central left multiple.
Then $\hat{h}(x)$ is irreducible in $F[x]$ 
 and $h$ generates a maximal two-sided ideal $Rh$ in $R$  \cite[p.~16]{J96}.  We have
$$C(R/Rh)\cong F[x]/F[x] \hat{h}(x)$$
 \cite[Proposition 4]{I99}, and  ${\rm deg}(h)=p^e deg(\hat{h})$. Define $E_{\hat{h}}=F[x]/F[x] \hat{h}(x)$ and
 let $k$ be the number of irreducible factors of $h$ in $R$.

\begin{theorem}\label{thm:main delta} \cite{AO}
${\rm Nuc}_r(S_f)$ is a central division algebra over $E_{\hat{h}}$ of degree $s=d p^e/k$, and
 $$ R/Rh \cong M_k({\rm Nuc}_r(S_f)).$$
 In particular, this means that ${\rm deg}(\hat{h})=\frac{dm}{s}$, ${\rm deg}(h)=km=\frac{d p^e m}{s}$, and
 $$[{\rm Nuc}_r(S_f) :F]= s^2\cdot \frac{dm}{s}=dms.$$
 Moreover, $s$ divides ${\rm gcd}(dm,d p^e)$. If $f$ is not right invariant, then $k>1$ and $s\not=d p^e$.
\end{theorem}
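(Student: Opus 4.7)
The plan is to transpose the proof of Theorem \ref{thm:main2} to the differential setting, with the integer $n$ (the order of $\sigma|_C$ modulo inner) replaced throughout by $p^e = \deg(g)$. First I would note that since $\hat h$ is irreducible in $F[x]$, the polynomial $h \in C(R) = F[x]$ generates a maximal two-sided ideal of the principal ideal domain $R$. Hence in any factorization $h = h_1 h_2 \cdots h_k$ into irreducibles in $R$, the factors $h_i$ are pairwise similar and all of the same degree $m$. Applying the Artin--Wedderburn decomposition together with \cite[Theorem 1.2.19]{J96}, I would write $R/Rh \cong M_k(D_h)$, where $D_h \cong I(h_i)/Rh_i$ is the idealiser quotient; choosing the factorization so that one $h_i$ equals $f$ (possible because $f$ is an irreducible divisor of $h$) identifies $D_h$ with $I(f)/Rf = {\rm Nuc}_r(S_f)$. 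Since $f$ is irreducible this is a division algebra, yielding $R/Rh \cong M_k({\rm Nuc}_r(S_f))$ and $\deg(h) = km$.

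The numerical statements would then follow from a dimension count over $F$, whose input is the identity $[C:F] = p^e$. Granted this, $R/Rh$ is free of rank $\deg(h) = p^e \deg(\hat h)$ as a left $D$-module, and $[D:F] = d^2 [C:F] = d^2 p^e$, so
$$[R/Rh:F] = d^2 p^{2e} \deg(\hat h).$$
Setting $B := {\rm Nuc}_r(S_f)$, which is central over $E_{\hat h}$ of some degree $s$, the other side reads $[M_k(B):F] = k^2 s^2 \deg(\hat h)$. Equating these gives $d p^e = k s$, hence $s = dp^e/k$, $\deg(\hat h) = km/p^e = dm/s$, and $[B:F] = s^2 \deg(\hat h) = dms$. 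Integrality of $s = dp^e/k$ forces $s \mid dp^e$, and integrality of $\deg(\hat h) = dm/s$ forces $s \mid dm$, so $s \mid \gcd(dm, dp^e)$. If $f$ is not right-invariant, Petit's criterion makes $S_f$ non-associative; since $M_k(B)$ with $k=1$ would make $R/Rh$ and hence $S_f$ associative, we must have $k > 1$, and consequently $s = dp^e/k < dp^e$.

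The main obstacle I anticipate is justifying $[C:F] = p^e$. This is the differential analogue of the Galois identity $[C:{\rm Fix}(\sigma|_C)] = n$ used in Theorem \ref{thm:main2}, and it should follow from Jacobson's theory of purely inseparable field extensions defined by algebraic restricted derivations: in characteristic $p$, a derivation of $C$ whose $p$-polynomial minimum polynomial has degree $p^e$ has constant field of index exactly $p^e$ in $C$. Invoking this theorem (for instance via \cite{J96}) is the cleanest route; otherwise one would have to re-examine the structure of $C(R) = F[g(t) - d_0]$ and of $D$ as $F$-algebras more delicately to recover the same dimension count. A secondary subtlety is the final Petit-criterion step, where right-invariance of $f$ is used as being equivalent to associativity of $S_f$; this is standard but must be cited explicitly rather than rediscovered.
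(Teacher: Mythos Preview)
Your proposal is correct and follows exactly the approach the paper intends: the paper simply states that ``the proof is analogous to the one of Theorem \ref{thm:main2},'' and you have faithfully carried out that analogy, replacing $n$ by $p^e$ throughout and correctly identifying the crucial input $[C:F]=p^e$ from Jacobson's theory of algebraic derivations in characteristic $p$. The paper uses this identity implicitly (e.g.\ in the line $[S_f:F]=[S_f:C]\,p^e=d^2m\cdot p^e$ just after the theorem), so your flagging it as the main point needing justification is apt rather than a gap.
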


The proof is analogous to the one of Theorem \ref{thm:main2}. In particular, $[S_f:F]=[S_f:C] p^e =d^2 m\cdot p^e$.
Comparing dimensions we obtain again that
$[S_f:{\rm Nuc}_r(S_f)]=k$, and if $f$ is not right-invariant,  $k>1$.

 For each $z(t)=\hat{z}(g(t)-d_0)\in F[g(t)-d_0]$ with $\hat{z}\in F[x]$, we have $z\in Rf$ if and only if $z\in Rh$.
 Let
$$E_f=\lbrace z(t)+Rf \,|\,z(t)=\hat{z}((g(t)-d_0))\in F[(g(t)-d_0)]\rbrace\subset R/Rf.$$
 Together with  the multiplication $(x+Rf)\circ(y+Rf)= (xy)+Rf$
 for all $x,y\in F[(g(t)-d_0)]$, $E_f$ is a field extension of $F$ of degree ${\rm  deg }(\hat{h})$ isomorphic to $E_{\hat{h}}$.
 Let $B={\rm Nuc}_r(S_f)$, then $B$ has degree $s$ over $E_{\hat{h}}$, and $R/Rf$ is a free right $B$-module of dimension $k$ via
$R/Rf\times B\longrightarrow R/Rf$, $(a+Rf)(z+Rf)=az+Rf$.  We assume $f$ is not right-invariant
which yields $k>1$.

For  $\rho\in {\rm Aut}(D)$ define $F'={\rm Fix}(\rho)\cap F$. We assume that $F/F'$ is finite-dimensional. Let $\nu\in D$ and $1\leq l<k=dp^e/s$. Define the set
$S_{p^e,m,l}(\nu,\rho, f)=\lbrace a+Rh\,|\, a\in P\rbrace\subset R/Rh,$
where
$$P=\lbrace a_0+a_1t+\dots+a_{lm-1}t^{lm-1}+\nu\rho(a_0)t^{lm} \,|\, a_i\in D\rbrace\subset D[t;\delta].$$
$S_{p^e,m,l}(\nu,\rho, f)$ is a vector space over $F'$ of dimension $d^2 p^em[F:F']$.
We identify each element of $S_{p^e,m,l}(\nu,\rho,f)$ with a map in ${\rm End}_B(R/Rf)$ as follows:
For each $a\in S_{p^e,m,l}(\nu,\rho,f)$ let $L_a:R/Rf\to R/Rf$ be the left multiplication map  $L_a(b+Rf)=ab+Rf$.
Let $M_a$ be the matrix in $M_k(B)$ representing $L_a$ with respect to a $B$-basis of $R/Rf$, and denote  the image of $S=S_{p^e,m,l}(\nu,\rho,f)$  in $M_k(B)$ by
$$\mathcal{C}_{p^e,m,l}=\lbrace M_a\mid a\in S_{p^e,m,l}(\nu,\rho,f)\rbrace.$$

 For $l=1$, this construction again yields algebras over $F'$:
 define a multiplication on the $F'$-vector space $R_m=\lbrace g\in R \,|\, deg (g)<m\rbrace$
via
$$a(t)\circ b(t)=(a(t)+\nu\rho(a_0)t^m)b(t)\, {\rm mod}_r(f).$$
For $m>1$,
$(R_m,\circ)$ is isomorphic to $S(\nu,\rho,f)=S_{p^e,m,1}(\nu,\rho,f)$. Therefore we also denote
$(R_m,\circ)$ by $S(\nu,\rho,f)=S_{p^e,m,1}(\nu,\rho,f)$.

\begin{example}
Let $R=D[t;\delta]$ and  $f(t)=t+c$ for some $c\in D$. For  $\nu\in D^\times$ and $\rho\in {\rm Aut}(D)$, $S_{p^e,1,1}(\nu,\rho,f)=(D,\circ)$ has the multiplication \begin{align*}
x\circ y =& (x+\nu\rho(x)t)y)\: {\rm mod}_rf
= xy+\nu\rho(x)yt+ \nu\rho(x)\delta(y)\: {\rm mod}_rf \\
= & xy+\nu\rho(x)(\delta(y)-yc)
\end{align*}
for all $x,y\in D$.
\end{example}

\begin{theorem}\label{rank of a polynomial delta}
Let $f\in D[t;\delta]$ be irreducible and ${\rm deg}(h)=km$.  For all $a+Rh\in R/Rh$,
$${\rm dim}_B({\rm im}(L_{M_a}))=
 k^2 - \frac{k}{m} {\rm deg} ({\rm gcrd}(a,\hat{h}(g(t)-d_0))),$$
 $$ {\rm colrank}(M_a)=k-\frac{1}{m} {\rm deg} ({\rm gcrd}(a,\hat{h}(g(t)-d_0)).$$
 In particular, if ${\rm deg}(h)=dmp^e$ then  $M_a\in M_{p^e}(E_{\hat{h}})$  and
 $${\rm  rank}(M_a)= dp^e - \frac{1}{m}{\rm deg}({\rm gcrd}(a,\hat{h}(g(t)-d_0))).$$
\end{theorem}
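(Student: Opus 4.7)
The plan is to mirror the proof of Theorem \ref{rank of a polynomial general}, exploiting the formal similarity between $D[t;\sigma]$ and $D[t;\delta]$: the ring $R=D[t;\delta]$ is also a principal ideal domain admitting left and right Euclidean division by monic polynomials, $h$ lies in the center $C(R)=F[x]$ with $x=g(t)-d_0$, and Theorem \ref{thm:main delta} provides the isomorphism $\Psi:R/Rh\to M_k(B)$, $\Psi(a+Rh)=M_a$. First I would apply the Rank-Nullity Theorem for free right $B$-modules \cite[ch.~IV, Cor.~2.14]{H} to $L_A:M_k(B)\to M_k(B)$, $X\mapsto AX$, to obtain
$$k^2=\dim_B(\mathrm{im}(L_A))+\dim_B(\mathrm{Ann}_r(A)),$$
and then transport $\mathrm{Ann}_r(M_a)$ back to $R/Rh$ via $\Psi$ to conclude that $\mathrm{Ann}_r(M_a)\cong \mathrm{Ann}_r(a+Rh)$, where $\mathrm{Ann}_r(a+Rh)=\{b+Rh\mid (a+Rh)(b+Rh)=0\}$. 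This step is purely formal and does not rely on whether we twist by $\sigma$ or by $\delta$.

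Next I would compute $\dim_B(\mathrm{Ann}_r(a+Rh))$. Writing $\gamma=\mathrm{gcrd}(a,h)$, I would apply Lemma \ref{right divisors are left divisors} — using here that $h\in C(R)$ and $R$ is a domain — to deduce that $h=\delta\gamma=\gamma\delta$ for a unique $\delta\in R$, and write $a=b\gamma$ with $\mathrm{gcrd}(b,\delta)=1$. Using left Euclidean division in $R=D[t;\delta]$ (which works unchanged for monic divisors), every $v\in R$ admits a unique decomposition $v=\delta u+w$ with $\deg(w)<\deg(\delta)$. A short manipulation, identical to the $\sigma$-case, gives $av+Rh=b\gamma w+Rh$, and a Bezout-style argument (using $cb+d\delta=1$ for suitable $c,d\in R$, together with $R$ being a domain) forces $w=0$ whenever $av\equiv 0\bmod h$. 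This identifies $\mathrm{Ann}_r(a+Rh)$ with the free left $D$-module $R_{\deg(\gamma)}=\{g\in R\mid\deg(g)<\deg(\gamma)\}$.

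Once this identification is in place, the dimension count is routine: $\dim_D(R_{\deg(\gamma)})=\deg(\gamma)$, and since $[D:F]=[D:C][C:F]=d^2p^e$ (the factor $p^e$ coming from $[C:F]=\deg(g)=p^e$, in place of the $n$ in the $\sigma$-case), we get $\dim_F(\mathrm{Ann}_r(a+Rh))=\deg(\gamma)d^2p^e$. Combined with $\dim_F(B)=d^2mp^e/k$ from Theorem \ref{thm:main delta}, this yields $\dim_B(\mathrm{Ann}_r(a+Rh))=\deg(\gamma)k/m$, hence
$$\dim_B(\mathrm{im}(L_A))=k^2-\tfrac{k}{m}\deg(\gamma).$$

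Finally, to pass from $\dim_B(\mathrm{im}(L_A))$ to $\mathrm{colrank}(M_a)$, I would reuse verbatim the elementary matrix computation from the end of Theorem \ref{rank of a polynomial general}: expressing each column of $AX$ as a right $B$-linear combination of the columns of $A$, one sees that the $B$-span of each column of $AX$ has dimension equal to $\mathrm{colrank}(A)$, so $\dim_B(\mathrm{im}(L_A))=k\cdot\mathrm{colrank}(A)$, giving the stated formula. The special case $\deg(h)=dmp^e$ forces $s=1$, hence $B=E_{\hat{h}}$, $k=dp^e$, and the rank formula. The only place where one has to be slightly careful — and which I view as the main (mild) obstacle — is verifying that the left Euclidean algorithm and the Bezout step genuinely go through in $D[t;\delta]$ as in $D[t;\sigma]$; both are standard in $R$ being a non-commutative PID, so no new argument is needed.
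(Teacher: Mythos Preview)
Your proposal is correct and follows exactly the approach the paper intends: the paper gives no separate proof for this theorem, noting only (after the subsequent corollary) that ``the proofs are all identical to their analogues where $f\in D[t;\sigma]$,'' and your sketch carries out precisely that transfer from Theorem~\ref{rank of a polynomial general}, replacing $n$ by $p^e$ in the dimension count. One cosmetic point: you reuse the symbol $\delta$ for the cofactor polynomial with $h=\delta\gamma$, which clashes with the derivation $\delta$ in $R=D[t;\delta]$; renaming it (say $\delta'$ or $q$) would avoid confusion, but the mathematics is unaffected.
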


 Thus $S_{p^e,m,1}(\nu,\rho,f)$
is a division algebra if and only if there are no divisors of $h$ in $S_{p^e,m,1}(\nu,\rho,f)$. More generally for $l>1$, the above result means:

\begin{theorem}
 Suppose that $P$ does not contain any polynomial of degree $lm$, whose irreducible factors are all similar to $f$. Then the set
$S_{p^e,m,l}(\nu,\rho, f)$
defines an $F'$-linear  MRD-code
in $M_k(B)$ with minimum distance $k-l+1$. In particular, if ${\rm deg}(h)=dmp^e$,
then this code is an $F'$-linear MRD-code in $M_{dp^e}(E_{\hat{h}})$ with minimum distance $dp^e-l+1$.
\end{theorem}

\begin{corollary}  \label{thm:division delta}
 Suppose that  $l=1$.
\\ (i) If $a(t)+\nu\rho(a_0)t^m \in P$ is reducible, then $a(t)$ is not a left zero divisor of $(R_m,\circ)$.
\\ (ii) If $\nu=0$ then $(R_m,\circ)$ is a division algebra over $F'$, which for $m\geq 2$ is a Petit algebra.
\\ (iii) If $P$ does not contain any polynomial similar to $f$, then $(R_m,\circ)$ is a division algebra over $F'$.
\end{corollary}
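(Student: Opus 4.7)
The plan is to proceed in direct analogy with the proof of Theorem \ref{thm:division3}, since $R=D[t;\delta]$ shares with $D[t;\sigma]$ exactly the structural properties that proof relied on: it is a principal ideal domain with no zero divisors, and admits a factorization of every nonzero element into irreducibles that is unique up to similarity (Jacobson's Theorem 1.2.9, which is stated for the general $D[t;\sigma,\delta]$ setting).

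First I would suppose that $b\circ c=0$ for some $b,c\in R_m$ with $b(t)=b_0+b_1t+\dots+b_{m-1}t^{m-1}$ nonzero. By definition of $\circ$, this means that there exists $g\in R$ with
\[
(b(t)+\nu\rho(b_0)t^m)\,c(t)=g(t)f(t).
\]
Because $f$ is irreducible of degree $m$ and $\deg(c)<m$, $f$ cannot be similar to any irreducible factor of $c$; hence by uniqueness of factorization up to similarity, $f$ must be similar to an irreducible factor of $b(t)+\nu\rho(b_0)t^m$. Since $\deg(b(t)+\nu\rho(b_0)t^m)\le m$, this forces $f$ to be similar to the whole polynomial $b(t)+\nu\rho(b_0)t^m$, which in particular must then be irreducible of degree exactly $m$.

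From this single dichotomy all three statements follow at once. For (i), if $b(t)+\nu\rho(b_0)t^m$ is reducible, it cannot be similar to the irreducible $f$, so $b$ is not a left zero divisor. For (iii), if $A$ contains no polynomial similar to $f$, then the element $b(t)+\nu\rho(b_0)t^m\in A$ is never similar to $f$, and the argument shows that the only left zero divisor in $(R_m,\circ)$ is $0$; hence $(R_m,\circ)$ is a division algebra over $F'$. For (ii), when $\nu=0$ the multiplication degenerates to $b\circ c=b(t)c(t)\,\mathrm{mod}_r f$, which is precisely the Petit algebra $S_f$; since $f$ is irreducible, the standard Petit algebra theory guarantees $S_f$ is a (unital) division algebra, and it is genuinely a Petit algebra for $m\ge 2$ (the case $m=1$ being only the trivial $S_f\cong D$).

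The main point to verify, rather than a genuine obstacle, is that the uniqueness of factorization up to similarity, invoked tacitly in the $\sigma$-case, does transfer to $D[t;\delta]$. This is part of Jacobson's framework \cite{J96} for skew polynomial rings over a division algebra, so once cited the argument is a verbatim translation of the proof of Theorem \ref{thm:division3}.
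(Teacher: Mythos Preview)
Your proposal is correct and matches the paper's approach exactly: the paper simply states that ``The proofs are all identical to their analogues where $f\in D[t;\sigma]$,'' and you have carried out precisely that translation of the proof of Theorem~\ref{thm:division3}, correctly noting that Jacobson's unique factorization up to similarity holds in $D[t;\delta]$ as well.
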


The proofs are all identical to their analogues where $f\in D[t;\sigma]$.

\begin{remark}
One can also use the reduced norm $N$ of the central simple algebra $D(t;\delta)$ in this setting: given a central simple algebra $D$ with a maximal subfield $E$ and $R=D[t;\delta]$,
take the ring of central quotients $D(t;\delta)=\lbrace f/g\mid f\in R, g\in C(R)\rbrace$ of $R$. It has centre $C(D(t;\delta))={\rm Quot}(C(R))=F(x)$, where $x=g(t)=d_0$.
Let $\tilde{\delta}$ be the extension of $\delta$ to $D(x)$ such that $\tilde{\delta}=id_{t\mid D(x)}$. Then $D(t;\delta)$ is a central simple $F(x)$-algebra, more precisely
$D(t;\delta)\cong (D(x),\tilde{\delta},d_0+x),$ i.e. $D(t;\delta)$ is a generalized differential algebra.

Let $N$ be the reduced norm of $D(t;\delta)$. For all $f\in R$, $N(f)\in F[x]$ and $f$ divides $N(f)$.
 Let $\omega:D\to M_d(E)$ be the left regular representation of $D$. For any $f\in R$ of degree $m$, $N(f)=\pm {\rm det}(\omega(a_m))^{p^e}x^{dm}+\dots.$ In particular, $N(f)$ has degree $dm$ \cite{PT}.
As the bound of $f$ has degree $dm$ in $F[x]$, it follows that $N(f)$ is equal to the bound of $f$. Thus if ${\rm deg} (\hat{h})=dm$, we conclude that $\hat{h}(x)=\pm N(f)$.

There is more work to be done, e.g. to determine the constant term of $N(f(t))$. This may lead to criteria on how to obtain division algebras using our construction.
Additionally, the nuclei of both the algebras and the codes need to be calculated.
For instance, consider the special case where $d=1$, i.e. $R=K[t;\delta]$ for some field extension $K/F$.
If $f(t)=a_0+a_1t+\dots+a_mt^m\in R=K[t;\delta]$ has degree $m$, then
 $N(f(t))=(-1)^{m(p^e-1)} a_m ^{p^e}x^m+\dots$ \cite[Thm 18(ii)]{PT}
Thus
$$N(f(t))=a_m^{p^e}x^m+\dots$$
To find the constant term of $N(f)$ is difficult.
It is possible to compute special cases though,
e.g.  for $f(t)=g(t)+a\in K[t;\delta]$, $N(f(t))=(x+a)^{p^e}$  \cite{PT}.
\end{remark}

\emph{Acknowledgement:} We would like to thank J. Sheekey for several helpful discussions on the subject, and the referee, whose comments greatly improved our paper.


\end{document}